\documentclass[11pt]{article}

\usepackage[a4paper,margin=1in]{geometry}
\usepackage{amsmath,amssymb,amsthm,mathtools}
\usepackage{bm}
\usepackage{enumitem}
\usepackage{microtype}
\usepackage{hyperref}
\usepackage{cleveref}
\usepackage{comment}

\hypersetup{
  colorlinks=true,
  linkcolor=blue,
  citecolor=blue,
  urlcolor=blue
}

\usepackage{xcolor}      % explicit:  revision marks
\usepackage{graphicx}
\usepackage{subcaption}
\graphicspath{{figs/}}

\theoremstyle{plain}
\newtheorem{theorem}{Theorem}[section]
\newtheorem{proposition}[theorem]{Proposition}
\newtheorem{lemma}[theorem]{Lemma}
\newtheorem{corollary}[theorem]{Corollary}

\theoremstyle{definition}
\newtheorem{assumption}[theorem]{Assumption}

\newtheorem{remark}[theorem]{Remark}

\crefname{section}{Section}{Sections}
\Crefname{section}{Section}{Sections}
\DeclareMathOperator*{\argmin}{arg\,min}

\newcommand{\Rd}{\mathbb{R}^d}
\newcommand{\Lip}{\mathrm{Lip}}
\newcommand{\norm}[1]{\left\lVert #1 \right\rVert}

\title{Policy Iteration for Stationary Discounted Hamilton--Jacobi--Bellman Equations: A Viscosity Approach}
\author{
Namkyeong Cho\thanks{Department of Financial Mathematics, Gachon University, Korea. \texttt{namkyeong.cho@gmail.com}}
\and
Yeoneung Kim\thanks{Department of Industrial Engineering, Yonsei University, Republic of Korea. \texttt{yeoneung@yonsei.ac.kr}}
}
\date{}

\begin{document}
\maketitle

\begin{abstract}
{
We study policy iteration (PI) for deterministic infinite-horizon discounted
control problems characterized by stationary Hamilton--Jacobi--Bellman
equations. For general viscosity solutions, the classical gradient-based
policy improvement step need not be defined pointwise. We introduce a
semi-discrete formulation with centered difference quotients at scale $h$
and a separate artificial-viscosity term of order $O(h)$. The resulting
stencil is monotone, and the positive discount yields a resolvent contraction.
Under bounded Lipschitz data and a globally Lipschitz minimizing policy map,
we prove monotone and geometric convergence of the value iterates for each
fixed $h>0$, together with a local quadratic estimate whose constant is of
order $h^{-2}$. Under the additional condition $\lambda>\Lip_x(f)$, we establish
$\|V^h-V\|_\infty\le C\sqrt h$ and combine the discretization and iteration
errors into a quantitative bound. A bounded Lipschitz example shows that the
$\sqrt h$ exponent is sharp for this scheme. The combined estimate gives a
sufficient iteration count of order $h^{-1}\log(1/h)$ to attain an error of
order $\sqrt h$. In bounded-domain experiments, the smooth one-dimensional
benchmark exhibits the predicted discretization plateau, while a nonlinear
two-dimensional manufactured benchmark isolates convergence to the discrete
solution. Exact policy evaluation gives substantially faster local
convergence than the global geometric bound. A neural evaluation diagnostic
illustrates the importance of controlling boundary errors as well as interior
residuals.}
\end{abstract}

% ============================================================

\section{Introduction}

Policy iteration (PI), originally introduced by Howard~\cite{Howard1960}, is a cornerstone of dynamic programming for solving optimal control and Markov decision processes. In discrete settings, PI enjoys strong structural properties, including monotonicity and geometric convergence \cite{Puterman1994,SantosRust2004}, and forms the basis of many reinforcement learning algorithms \cite{SuttonBarto2018}. 

In continuous time and space, optimal control problems are characterized by Hamilton--Jacobi--Bellman (HJB) equations, and PI can be formally interpreted as a nonlinear fixed-point method for such PDEs. The assumptions needed for a rigorous PDE-level analysis depend on the regularity and structure of the control problem. Existing results typically rely on special structures, such as linear--quadratic models \cite{Kleinman1968,Vrabie2009}, or stochastic control problems where diffusion provides regularization \cite{Kerimkulov2020,Puterman1981}. More recently, convergence results have been obtained for entropy-regularized and exploratory stochastic control problems under ellipticity assumptions \cite{HuangWangZhou2025,TranWangZhang2025}, where second-order elliptic structure plays a crucial role.

{For deterministic infinite-horizon problems with control constraints,
Kundu and Kunisch~\cite{KunduKunisch2024} analyze policy iteration under
classical differentiability and stabilization assumptions. Their deterministic
theory concerns an undiscounted stabilization problem and assumes a $C^1$ value
function and an admissible stabilizing initial policy. The present analysis
addresses bounded discounted problems in a viscosity setting that allows
nonsmooth value functions.}

Deterministic continuous-time control poses a regularity challenge. Even when the value function is globally Lipschitz continuous, its gradient $\nabla V$ may fail to exist pointwise. Global Lipschitz regularity itself requires additional control of the drift relative to the discount, as in Lemma~\ref{lem:V_lipschitz}. As a consequence, the classical policy improvement step
\[
\alpha_{n+1}(x) = \alpha(x,\nabla V_n(x))
\]
need not be defined pointwise. A formulation that accommodates nonsmooth values is therefore needed to analyze this update under general viscosity assumptions. A recent work~\cite{TangTranZhang2025} resolves this issue for deterministic finite-horizon problems. Their key idea is to introduce a monotone semi-discrete approximation by adding a viscosity term through finite differences in space. This artificial diffusion restores comparison and allows policy improvement to be performed using discrete gradients. Within this framework, policy iteration becomes well posed, and exponential convergence of the iterates can be established. Moreover, the approximation error is shown to be of order $\sqrt{h}$, consistent with classical viscosity approximation theory \cite{CrandallLions1984,BarlesSouganidis1991}.

{
We study the stationary discounted counterpart of this semi-discrete viscosity
framework, with quantitative bounds that combine policy iteration and
discretization errors. The stationary HJB equation
\[
\lambda V(x)+H(x,\nabla V(x))=0
\]
has no time variable. Its positive zeroth-order term permits a resolvent
contraction estimate, whereas the finite-horizon analysis in
\cite{TangTranZhang2025} uses time evolution. We make this stationary
contraction explicit and track its dependence on the discount and mesh size.}

The goal of this paper is to develop a rigorous viscosity-based policy iteration framework for deterministic infinite-horizon discounted control problems. Building on the semi-discrete approach of \cite{TangTranZhang2025}, {we replace the spatial gradient by centered difference quotients at scale $h$ and add an artificial-viscosity term of order $O(h)$ to ensure comparison at the discrete level.} This allows policy iteration to be formulated as a well-defined nonlinear fixed-point procedure.

Our contributions are threefold. First, for each fixed mesh size $h>0$, we establish monotone and geometric convergence of the policy iteration sequence toward the semi-discrete solution. The contraction arises from the resolvent structure of the discounted operator, rather than from time evolution. The same assumptions also give a local quadratic convergence estimate for fixed $h$, whose constant is not uniform in the mesh size. Second, under the additional condition $\lambda>\Lip_x(f)$, we prove {the canonical} vanishing-viscosity estimate
\[
\|V^h - V\|_{L^\infty(\mathbb{R}^d)} \lesssim \sqrt{h},
\]
which has the classical $\sqrt h$ order for Lipschitz solutions
\cite{CrandallLions1984}; Remark~\ref{rem:sqrt_h_sharpness} shows sharpness
within the present bounded-data class. Third, we derive a quantitative error decomposition that separates policy iteration error from discretization error and reveals a nontrivial coupling between the iteration count and the mesh size.

From a broader perspective, our work provides a PDE-based foundation for policy iteration in deterministic control. It complements recent developments in stochastic control, exploratory reinforcement learning, and entropy-regularized HJB equations \cite{HuangWangZhou2025,TranWangZhang2025}, as well as modern computational approaches based on operator learning, neural policy iteration, and physics-informed PDE solvers~\cite{lee2025hamilton,kim2025neural,kim2026physics,raissi2019physics,han2018solving}. In particular, these recent works demonstrate that policy-iteration-type ideas are increasingly important beyond classical control theory, but their numerical success still relies on structural ingredients such as regularization, stable policy evaluation, and well-posed policy improvement. Our analysis clarifies the role of monotonicity, viscosity regularization, and resolvent contraction in ensuring such stability and convergence in the deterministic stationary setting.

The remainder of the paper is organized as follows.
Section~\ref{sec:setup} introduces the discounted control problem
and explains the obstruction to classical pointwise policy improvement.
Section~\ref{sec:scheme} presents the semi-discrete scheme and the
associated PI algorithm.
Sections~\ref{sec:tools} and \ref{sec:wellposed}
establish the structural properties and well-posedness of the scheme.
Geometric convergence of policy iteration is proved in
Section~\ref{sec:expconv},
while the discretization error as $h\to0$ is analyzed in
Section~\ref{sec:disc_error}. Section~\ref{sec:exp} provides numerical validation.

\section{Problem setup}\label{sec:setup}
\subsection{Continuous-time discounted control}
Let $A\subset\mathbb{R}^m$ be nonempty and compact. Consider the controlled ODE
\begin{equation}\label{eq:ode}
\dot x(t)= f(x(t),a(t)),\qquad a(t)\in A,\qquad x(0)=x\in\Rd,
\end{equation}
with running cost $c(x,a)$ and discount factor $\lambda>0$. For a measurable control $a(\cdot)$,
define the discounted cost
\begin{equation}\label{eq:Jinf}
J(x;a):=\int_0^\infty e^{-\lambda t}\, c(x(t),a(t))\,dt.
\end{equation}
The value function is
\begin{equation}\label{eq:Vstar}
V(x):=\inf_{a(\cdot)}J(x;a).
\end{equation}
The associated Hamiltonian is defined by
\begin{equation}\label{eq:H}
H(x,p):=\sup_{a\in A}\{- c(x,a) -f(x,a)\cdot p \}.
\end{equation}

Throughout the paper, we work under the following assumptions.

\begin{assumption}\label{ass:A}
Assumptions on $f$, $c$ and $A$.
\begin{enumerate}[label=(A\arabic*),leftmargin=18pt]

\item\label{A1}
\textbf{Uniform boundedness and Lipschitz continuity.}
The functions $f(\cdot,\cdot)$ and $c(\cdot,\cdot)$ are uniformly bounded
and Lipschitz continuous in $(x,a)$:
\[
\norm{f}_{L^\infty(\Rd\times A)}+\norm{c}_{L^\infty(\Rd\times A)}<\infty,
\qquad
\Lip_x(f)+\Lip_x(c)+\Lip_a(f)+\Lip_a(c)<\infty.
\]

\item\label{A2}
\textbf{Compact control set.}
The control set $A\subset\mathbb R^m$ is nonempty and compact.

\item\label{A3}
\textbf{Discount factor.}
The discount parameter satisfies $\lambda>0$.

\end{enumerate}
\end{assumption}

Under Assumption~\ref{ass:A}, it is well known that the value function $V$
is the unique bounded viscosity solution of the stationary discounted HJB equation
\eqref{eq:HJB}; see, for example, \cite{CrandallIshiiLions1992,FlemingSoner2006,BardiCapuzzo1997,tran2021hamilton}.

\begin{equation}\label{eq:HJB}
\lambda V(x)+H(x,\nabla V(x))=0\qquad\text{in }\Rd.
\end{equation}

The presence of the zeroth-order term $\lambda V$ induces a resolvent structure in the stationary equation, which plays a stabilizing role analogous to time evolution in finite-horizon problems~\cite{TangTranZhang2025}. In particular, estimates deteriorate as $\lambda \downarrow 0$, reflecting the loss of coercivity in the discounted operator.

\subsection{Notation and semi-discrete operators}
\paragraph{Basic notation.}
Throughout the paper, $d\ge 1$ denotes the space dimension and
$\Rd$ the Euclidean space.
For $R>0$, we write
\[
B_R:=\{x\in\Rd:\ |x|<R\}.
\]
The Lebesgue $L^p$ norm on $\Omega\subset\Rd$ is denoted by $\|\cdot\|_{L^p(\Omega)}$.
For bounded pointwise-defined functions, $\|u\|_\infty:=\sup_{x\in\Rd}|u(x)|$
is the pointwise supremum norm. It agrees with the essential supremum for
continuous functions; arbitrary bounded functions are considered in
$\ell^\infty(\Rd)$, without identifying functions that agree almost everywhere.

\paragraph{Continuous operators.}
For a differentiable function $V:\Rd\to\mathbb{R}$, we define
\[
\nabla V(x):=(\partial_1 V(x),\dots,\partial_d V(x)),
\qquad
\Delta V(x):=\sum_{i=1}^d \partial_{ii}V(x),
\]
as the gradient and the Laplacian of $V$, respectively.

\paragraph{Discrete differences.}
Fix a mesh size $h\in(0,1)$.
For $\phi:\Rd\to\mathbb{R}$ and $i=1,\dots,d$, define
\[
D_h^i\phi(x):=\frac{\phi(x+h e_i)-\phi(x)}{h},
\qquad
D_{-h}^i\phi(x):=\frac{\phi(x)-\phi(x-h e_i)}{h}.
\]
The centered discrete gradient and Laplacian are
\begin{align}
\nabla_h\phi(x)
:=&
\left(
\frac{\phi(x+h e_1)-\phi(x-h e_1)}{2h},
\;
\dots,
\;
\frac{\phi(x+h e_d)-\phi(x-h e_d)}{2h}
\right),\label{eq:grad_h}\\
\Delta_h\phi(x)
:=&
\sum_{i=1}^d
\frac{\phi(x+h e_i)-2\phi(x)+\phi(x-h e_i)}{h^2}.\label{eq:laplace_h}
\end{align}

\paragraph{Semi-discrete operator.}
Let $\alpha:\Rd\to A$ be a bounded policy and define
\[
f_\alpha(x):=f(x,\alpha(x))\in\Rd,
\qquad
c_\alpha(x):=c(x,\alpha(x))\in\mathbb{R},
\]
with component notation
\[
f_\alpha(x)=(f_{\alpha,1}(x),\dots,f_{\alpha,d}(x)).
\]
% ============================================================
\subsection{Classical policy improvement and its regularity obstruction}\label{sec:illposed}
A classical stationary PI scheme evaluates a policy $\alpha_n:\Rd\to A$ by
solving
\begin{equation}\label{eq:cont_eval}
\lambda V_n(x)-c(x,\alpha_n(x))
-\nabla V_n(x)\cdot f(x,\alpha_n(x))=0
\qquad\text{in }\Rd,
\end{equation}
and then improves it through
\begin{equation}\label{eq:cont_improve}
\alpha_{n+1}(x)=\alpha\bigl(x,\nabla V_n(x)\bigr),
\end{equation}
where $\alpha(x,p)\in\argmin_{a\in A}\{c(x,a)+p\cdot f(x,a)\}$.

{
Even when a frozen policy is regular enough for \eqref{eq:cont_eval} to have
a unique bounded viscosity solution, that solution need not have a classical
gradient. If it is Lipschitz, its gradient is guaranteed only almost
everywhere; Lipschitz regularity is itself not automatic for arbitrary frozen
policies under Assumption~\ref{ass:A}. Thus \eqref{eq:cont_improve} does not
define a pointwise update on the full class of viscosity solutions considered
here. Additional regularity or a different formulation is needed.

We use centered differences to define the improvement step from point values
and add artificial viscosity to make the difference stencil monotone. This
construction permits comparison without asserting classical differentiability
of the semi-discrete solution.
\begin{remark}[Roles of discretization and discount]
Centered differences define policy improvement pointwise, and the
artificial-viscosity term ensures stencil monotonicity. The positive discount
supplies strict properness in the value variable and the contraction used
below. Indeed, the difference operators annihilate constants, so adding a
constant $k$ to the value changes the Bellman residual by $\lambda k$.
These roles underlie the comparison and convergence arguments.
\end{remark}
}

\section{Semi-discrete discounted HJB}\label{sec:scheme}

We now introduce a monotone {difference-quotient} approximation of the stationary
discounted Hamilton--Jacobi--Bellman equation
\begin{equation}\label{eq:HJB_cont}
\lambda V(x) + H(x,\nabla V(x)) = 0,
\qquad x\in\Rd.
\end{equation}

The goal is to retain the resolvent structure induced by the discount factor
$\lambda$, while regularizing gradients and restoring monotonicity at the
discrete level. As discussed in Section~\ref{sec:illposed}, this is essential
for obtaining a well-defined policy iteration map and stable convergence.

% ------------------------------------------------------------
\subsection{Definition of the semi-discrete scheme}

Fix a mesh size $h\in(0,1)$.
We replace the continuous gradient $\nabla$ by the centered discrete gradient
$\nabla_h$ defined in \eqref{eq:grad_h}, and introduce a discrete artificial
viscosity term of order $O(h)$. 
The semi-discrete stationary equation reads
\begin{equation}\label{eq:HJBh_new}
\lambda V^h(x)
+ H\big(x,\nabla_h V^h(x)\big)
= N h \Delta_h V^h(x),
\qquad x\in\Rd.
\end{equation}
The additional term $Nh\,\Delta_h V^h$ acts as a discrete artificial viscosity of order $O(h)$.
For every smooth test function $\phi$, we have $\nabla_h\phi\to\nabla\phi$
and $h\Delta_h\phi\to0$ locally uniformly as $h\to0$.
Thus \eqref{eq:HJBh_new} is consistent with the continuous HJB equation
\eqref{eq:HJB_cont}.

We introduce the semi-discrete affine residual operator
\begin{equation}\label{eq:Lalpha}
\mathcal{L}_\alpha^h U(x)
:=
\lambda U(x)
- c_\alpha(x)
- f_\alpha(x)\cdot \nabla_h U(x)
- N h\,\Delta_h U(x),
\end{equation}
for bounded policies $\alpha:\Rd\to A$ and then define the nonlinear Bellman operator
\begin{equation}\label{eq:Fh_def}
F_h[U](x):=\sup_{\alpha\in A}\mathcal L_\alpha^h U(x).
\end{equation}

With this notation, \eqref{eq:HJBh_new} is equivalently written as
\begin{equation}\label{eq:Bellman_form}
F_h[V^h](x)=0,
\qquad x\in\Rd.
\end{equation}

{%
\begin{remark}[terminology]\label{rem:terminology}
Equation \eqref{eq:HJBh_new} involves only shifted point values of $V^h$, so no
notion of viscosity solution is needed for it: by a solution we always mean a
bounded function satisfying \eqref{eq:HJBh_new} pointwise, and sub- and
supersolutions are understood in the same pointwise sense. Viscosity solutions
are used in this paper only for the continuous equation \eqref{eq:HJB_cont}.

{%
We also make precise what ``semi-discrete'' refers to. Only the differential
operator is discretized: $\nabla V$ is replaced by centered difference quotients
of step $h$. The state space is not discretized, and \eqref{eq:HJBh_new} is
imposed at every $x\in\Rd$, so the unknown $V^h$ is a bounded function on $\Rd$
rather than a vector of grid values, and \eqref{eq:HJBh_new} is a nonlocal
functional equation rather than a finite-dimensional system. A genuine grid, and
a truncated computational domain, appear only in the numerical experiments of
Section~\ref{sec:exp}.}
\end{remark}
}

The additional term $-Nh\Delta_h V^h$ ensures monotonicity of the
finite-difference stencil, which is crucial for comparison and stability.
Pointwise policy improvement uses the centered discrete gradient and does not
require a classical smoothing property of this nonlocal equation.
To guarantee monotonicity of the discrete operator, the artificial viscosity must
dominate the centered drift term. A sufficient condition is
\begin{equation}\label{eq:Nchoice_new}
N \;\ge\; \max\Big\{1,\frac{\|f\|_{L^\infty(\Rd\times A)}}{2}\Big\}.
\end{equation}
Under \eqref{eq:Nchoice_new}, the coefficients of the stencil values
$U(x\pm h e_i)$ in $\mathcal L_\alpha^h U(x)$, namely
{$-\big(\tfrac Nh\pm\tfrac{f_{\alpha,i}(x)}{2h}\big)$, are
nonpositive; equivalently, the neighbour weights
$\tfrac Nh\pm\tfrac{f_{\alpha,i}(x)}{2h}$ of the fixed-point map $T_\alpha$
defined in \eqref{eq:Talpha_def} below are nonnegative}.
Hence the scheme is monotone in the sense of finite-difference theory.
As a consequence, a discrete comparison principle holds, as established in
Proposition~\ref{prop:comparison}. 
% ------------------------------------------------------------

\subsection{Policy iteration for the semi-discrete equation}
Since \eqref{eq:HJBh_new} can be written in the Bellman form
\[
F_h[V](x) = \sup_{a\in A} \mathcal L_a^h V(x) = 0,
\]
the problem admits a dynamic programming structure.
Accordingly, we employ a Howard-type policy iteration scheme,
which alternates between policy evaluation (a linear resolvent problem)
and policy improvement (a pointwise maximization step).

\paragraph{Initialization.}
Choose an initial bounded {continuous} policy $\alpha_0:\Rd\to A$.
{%
Continuity is preserved along the iteration: if $\alpha_n$ is continuous, then
$V_n^h\in C_b(\Rd)$ by Corollary~\ref{cor:banach}, hence $\nabla_hV_n^h$ is
continuous and, by the Lipschitz continuity of $\alpha(x,p)$ in
Assumption~\ref{ass:policy}, so is
$\alpha_{n+1}=\alpha(\cdot,\nabla_hV_n^h(\cdot))$.
No global Lipschitz bound on $V_n^h$ is needed for the fixed-$h$ analysis;
continuity suffices.}

\paragraph{Policy evaluation.}
For a given policy $\alpha_n$, let $V_n^h:\Rd\to\mathbb R$ be the (bounded) solution of
\begin{equation}\label{eq:PI_eval_h}
\mathcal{L}_{\alpha_n}^{h} V_n^h(x)=0,
\qquad x\in\Rd.
\end{equation}
This corresponds to solving a linear resolvent equation associated with the frozen policy $\alpha_n$, which defines a contraction mapping due to the presence of the discount term $\lambda$.

\paragraph{Policy improvement.}
Define the next policy by
\begin{equation}\label{eq:PI_improve_h}
\alpha_{n+1}(x)=\alpha\bigl(x,\nabla_h V_n^h(x)\bigr),
\qquad x\in\Rd.
\end{equation}
Note that since $\nabla_h V_n^h(x)$ depends only on the point values
$V_n^h(x\pm h e_i)$, the update is well-defined pointwise without requiring differentiability of $V_n^h$. This step enforces the pointwise optimality condition in the Bellman operator, and corresponds to a greedy policy improvement step.

\paragraph{Fixed point.}
A function $V^h:\Rd\to\mathbb{R}$ satisfying \eqref{eq:HJBh_new}
is called the semi-discrete value function. {In view of the Bellman formulation \eqref{eq:Bellman_form}, $V^h$ is the unique \emph{zero} of the nonlinear operator $F_h$; it is the associated resolvent operator $T$ of Section~\ref{sec:resolvent} that admits $V^h$ as its unique fixed point.}

Starting from an initial policy $\alpha_0$, the policy iteration scheme
generates a sequence $\{V_n^h\}_{n\ge0}$ through alternating evaluation
and improvement steps.
Under the monotonicity condition \eqref{eq:Nchoice_new},
this sequence is well defined and satisfies
\[
V_{n+1}^h \le V_n^h \qquad \text{in }\Rd.
\]
{%
Moreover, the sequence is uniformly bounded in $\ell^\infty(\Rd)$ and converges
uniformly on $\Rd$ to the unique bounded solution $V^h$ of \eqref{eq:HJBh_new}.
Both the boundedness and the convergence are consequences of the resolvent
structure rather than of monotonicity: rewriting \eqref{eq:HJBh_new} as a
fixed-point equation for a contraction $T$ on $\ell^\infty(\Rd)$, as we do in
Section~\ref{sec:resolvent}, yields existence and uniqueness from the Banach
fixed-point theorem and geometric convergence from the contraction factor.
Monotonicity is used separately, to show that the iterates approach $V^h$ from
above.

This resolvent interpretation provides the basis for the subsequent convergence analysis, where the contraction arises from the discount factor $\lambda$ competing with the total stencil weight $2dN/h$.}

\section{Structural properties of the semi-discrete operator}
The following structural properties place the scheme within the framework of monotone approximation schemes for Hamilton--Jacobi equations \cite{BarlesSouganidis1991}.
\subsection{Monotonicity and comparison}\label{sec:tools}
\begin{lemma}\label{lem:mono_operator}
Assume \eqref{eq:Nchoice_new} and let $\alpha:\Rd\to A$ be fixed and $\mathcal L_\alpha^h$ defined in \eqref{eq:Lalpha}. 
% Define $\mathcal L_\alpha^h$ by
% \[
% \mathcal{L}_\alpha^h U(x)
% :=
% \lambda U(x)
% - c_\alpha(x)
% - f_\alpha(x)\cdot \nabla_h U(x)
% - N h\,\Delta_h U(x).
% \]
Then for each $x\in\Rd$, $\mathcal L_\alpha^h U(x)$ is
\emph{nondecreasing} in the central value $U(x)$ and \emph{nonincreasing} in each neighbor value
$U(x\pm h e_i)$. Equivalently, for functions $U,V:\Rd\to\mathbb R$:
\begin{enumerate}[label=(\roman*),leftmargin=18pt]
\item If $U(x)\le V(x)$ and $U(x\pm h e_i)=V(x\pm h e_i)$ for all $i$, then
\[
\mathcal L_\alpha^h U(x)\le \mathcal L_\alpha^h V(x).
\]
\item If $U(x)=V(x)$ and $U(x\pm h e_i)\le V(x\pm h e_i)$ for all $i$, then
\[
\mathcal L_\alpha^h U(x)\ge \mathcal L_\alpha^h V(x).
\]
\end{enumerate}
Moreover, the Bellman operator $F_h[U](x):=\sup_{a\in A}\mathcal L_a^h U(x)$ inherits the same
(monotone) dependence on stencil values.
\end{lemma}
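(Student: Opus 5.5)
The plan is to expand $\mathcal L_\alpha^h U(x)$ explicitly as an affine function of the stencil values and read off the sign of each coefficient. Substituting the definitions \eqref{eq:grad_h} and \eqref{eq:laplace_h} into \eqref{eq:Lalpha} gives
\[
\mathcal L_\alpha^h U(x)
=
\Bigl(\lambda+\frac{2dN}{h}\Bigr)U(x)
-\sum_{i=1}^d\Bigl(\frac Nh+\frac{f_{\alpha,i}(x)}{2h}\Bigr)U(x+he_i)
-\sum_{i=1}^d\Bigl(\frac Nh-\frac{f_{\alpha,i}(x)}{2h}\Bigr)U(x-he_i)
-c_\alpha(x).
\]
The coefficient of the central value is $\lambda+2dN/h>0$. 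For the neighbour coefficients we use \eqref{eq:Nchoice_new}: since $|f_{\alpha,i}(x)|\le\|f\|_{L^\infty(\Rd\times A)}\le 2N$, we get $\frac Nh\pm\frac{f_{\alpha,i}(x)}{2h}\ge \frac{2N-|f_{\alpha,i}(x)|}{2h}\ge0$. So each neighbour value enters with a nonpositive coefficient.

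Items (i) and (ii) follow from this expansion. Take the difference $\mathcal L_\alpha^h U(x)-\mathcal L_\alpha^h V(x)$. The term $c_\alpha(x)$ cancels, and the difference is a sum of coefficients times differences of stencil values. In case (i) only the central term survives, and it equals $(\lambda+2dN/h)(U(x)-V(x))\le0$. In case (ii) only neighbour terms survive, each of the form $-(\text{nonnegative weight})\cdot(U(x\pm he_i)-V(x\pm he_i))\ge0$. This also gives the stated "nondecreasing/nonincreasing" formulation, since that is just monotonicity of an affine function in each variable.

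For the Bellman operator we apply the expansion with a constant control $a\in A$ in place of $\alpha(x)$; the coefficient bounds hold uniformly in $a$ because \eqref{eq:Nchoice_new} bounds $f$ over all of $A$. In case (i), $\mathcal L_a^hU(x)\le\mathcal L_a^hV(x)\le F_h[V](x)$ for every $a$, and taking the supremum over $a$ gives $F_h[U](x)\le F_h[V](x)$. Case (ii) is symmetric: $\mathcal L_a^hV(x)\le\mathcal L_a^hU(x)\le F_h[U](x)$, and taking the supremum gives $F_h[V](x)\le F_h[U](x)$.

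No step is a real obstacle. The only point needing care is that \eqref{eq:Nchoice_new} controls $|f_{\alpha,i}|$ componentwise, since $|f_{\alpha,i}|\le|f_\alpha|\le\|f\|_\infty$. One should also check that the suprema are finite, which holds because $A$ is compact and $f$, $c$ are bounded; even so, the order argument for suprema goes through regardless.
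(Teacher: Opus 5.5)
Your proposal is correct and matches the paper's proof: you expand $\mathcal L_\alpha^h U(x)$ into the same stencil form, use \eqref{eq:Nchoice_new} to show the neighbour coefficients are nonpositive and the central coefficient $\lambda+2dN/h$ is positive, and carry the monotonicity over to $F_h$ through the supremum over $a\in A$. Your two-line supremum argument for $F_h$ simply writes out the paper's remark that a pointwise supremum of monotone functions is again monotone.
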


\begin{proof}
% Fix $x\in\Rd$. Expand the discrete gradient and Laplacian:
% \[
% f_\alpha(x)\cdot \nabla_h U(x)
% =\sum_{i=1}^d \frac{f_{\alpha,i}(x)}{2h}\Big(U(x+h e_i)-U(x-h e_i)\Big),
% \]
% and
% \[
% -Nh\Delta_h U(x)
% =
% -\sum_{i=1}^d \frac{N}{h}\Big(U(x+h e_i)-2U(x)+U(x-h e_i)\Big).
% \]
% Therefore,
% \begin{align*}
% \mathcal L_\alpha^h U(x)
% &=
% \lambda U(x)-c_\alpha(x)
% -\sum_{i=1}^d \frac{f_{\alpha,i}(x)}{2h}U(x+h e_i)
% +\sum_{i=1}^d \frac{f_{\alpha,i}(x)}{2h}U(x-h e_i)\\
% &\quad
% -\sum_{i=1}^d \frac{N}{h}U(x+h e_i)
% +\sum_{i=1}^d \frac{2N}{h}U(x)
% -\sum_{i=1}^d \frac{N}{h}U(x-h e_i).
% \end{align*}
Expanding the discrete gradient and Laplacian and then collecting the coefficients yields the stencil form
\begin{equation}\label{eq:stencil_L}
\mathcal L_\alpha^h U(x)
=
\Big(\lambda+\frac{2dN}{h}\Big)U(x)
-c_\alpha(x)
+\sum_{i=1}^d a_i^+(x)\,U(x+h e_i)
+\sum_{i=1}^d a_i^-(x)\,U(x-h e_i),
\end{equation}
where
\[
a_i^+(x):=-\frac{N}{h}-\frac{f_{\alpha,i}(x)}{2h},
\qquad
a_i^-(x):=-\frac{N}{h}+\frac{f_{\alpha,i}(x)}{2h}.
\]  
By \eqref{eq:Nchoice_new}, we have $N\ge \|f\|_\infty/2$, hence
\[
a_i^+(x)\le -\frac{N}{h}+\frac{|f_{\alpha,i}(x)|}{2h}\le 0,
\qquad
a_i^-(x)\le -\frac{N}{h}+\frac{|f_{\alpha,i}(x)|}{2h}\le 0.
\]
On the other hand, the central coefficient satisfies
\[
\lambda+\frac{2dN}{h}>0.
\]
Thus, in \eqref{eq:stencil_L}, increasing \(U(x)\) increases \(\mathcal L_\alpha^h U(x)\), whereas increasing any neighboring value \(U(x \pm h e_i)\) decreases \(\mathcal L_\alpha^h U(x)\); hence (i)--(ii) follow.

Finally, since the pointwise supremum of functions that are nondecreasing (resp.\ nonincreasing)
in a given variable remains nondecreasing (resp.\ nonincreasing) in that variable, the Bellman
operator $F_h[U](x)=\sup_{a\in A}\mathcal L_a^h U(x)$ inherits the same monotonicity property.
\end{proof}
\begin{proposition}\label{prop:comparison}
Assume \eqref{eq:Nchoice_new}. 
{Let $U:\Rd\to\mathbb R$ be bounded and lower semicontinuous,
and let $\tilde U:\Rd\to\mathbb R$ be bounded and upper semicontinuous.}
Assume that $U$ is a supersolution and $\tilde U$ is a subsolution {(in the pointwise sense of Remark~\ref{rem:terminology})} of
\begin{equation}\label{eq:Bellman_eq}
F_h[W](x)=0 \qquad \text{in }\Rd,
\end{equation}
where
\[
F_h[W](x)
=
\sup_{a\in A}
\Big\{
\lambda W(x)
- c(x,a)
- f(x,a)\cdot \nabla_h W(x)
- Nh\Delta_h W(x)
\Big\}.
\]
Then $\tilde U\le U$ in $\Rd$.
\end{proposition}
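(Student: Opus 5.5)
Comparison on the whole of $\Rd$ without compactness: the supremum of $\tilde U-U$ need not be attained. Lower/upper semicontinuity does not help here, because the operator couples $x$ only to the lattice points $x\pm he_i$. So I would use a near-maximizer argument that exploits the strict properness supplied by $\lambda>0$; no penalization is needed.

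Set $M:=\sup_{\Rd}(\tilde U-U)$, which is finite since both functions are bounded. Suppose for contradiction that $M>0$. Fix $\varepsilon>0$ and choose $x_\varepsilon$ with $\tilde U(x_\varepsilon)-U(x_\varepsilon)\ge M-\varepsilon$. Define the shifted function $W:=U+M$. Then $W\ge\tilde U$ everywhere, and in particular at the neighbours $x_\varepsilon\pm he_i$. At the centre we have $W(x_\varepsilon)\le \tilde U(x_\varepsilon)+\varepsilon$.

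Next I use the stencil form \eqref{eq:stencil_L} for each fixed $a\in A$ and compare $\mathcal L_a^h\tilde U(x_\varepsilon)$ with $\mathcal L_a^h W(x_\varepsilon)$.
\begin{itemize}
\item The neighbour coefficients $a_i^\pm$ are nonpositive by \eqref{eq:Nchoice_new}, and $\tilde U\le W$ at the neighbours. So the neighbour terms of $\tilde U$ are at least those of $W$.
\item The centre coefficient is $\lambda+2dN/h$, and $\tilde U(x_\varepsilon)\ge W(x_\varepsilon)-\varepsilon$.
\end{itemize}
Together these give
\[
\mathcal L_a^h\tilde U(x_\varepsilon)\ \ge\ \mathcal L_a^h W(x_\varepsilon)-\Big(\lambda+\frac{2dN}{h}\Big)\varepsilon .
\]
The difference operators annihilate constants, so $\mathcal L_a^h W(x_\varepsilon)=\mathcal L_a^h U(x_\varepsilon)+\lambda M$. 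This is the observation in the remark on the roles of discretization and discount.

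Taking the supremum over $a\in A$ and using the sub- and supersolution properties, I get
\[
0\ \ge\ F_h[\tilde U](x_\varepsilon)\ \ge\ F_h[U](x_\varepsilon)+\lambda M-\Big(\lambda+\frac{2dN}{h}\Big)\varepsilon\ \ge\ \lambda M-\Big(\lambda+\frac{2dN}{h}\Big)\varepsilon .
\]
Letting $\varepsilon\to0$ with $h$ fixed gives $\lambda M\le0$, contradicting $M>0$. Hence $\tilde U\le U$.

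The main obstacle is the non-attainment of the supremum on an unbounded domain. The point to verify is that the error incurred at the near-maximizer is only $O(\varepsilon/h)$, since only the centre value is off by $\varepsilon$. This is harmless because $h$ is fixed, so no doubling of variables is required. One small check remains: that the supremum over $a$ in the chain of inequalities is taken legitimately. Passing from the inequality for each $a$ to the inequality for the suprema is immediate, because the error term does not depend on $a$. The semicontinuity hypotheses are not actually needed in this argument; they are only compatible with it.
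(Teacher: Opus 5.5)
Your proof is correct, and it takes a different route from the paper's.

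The paper localizes with $\delta\varphi$, where $\varphi(x)=\sqrt{1+|x|^2}$, and uses the semicontinuity of $\tilde U$ and $U$ to make $\tilde U-U-\delta\varphi$ attain its maximum $M_\delta$ at some $x_\delta$. The function $U+M_\delta+\delta\varphi$ then touches $\tilde U$ from above exactly at $x_\delta$, so Lemma~\ref{lem:mono_operator} applies with equality at the centre. The price is a penalization error $C\delta$, with $C$ independent of $h$, which leads to $\lambda M_\delta\le C\delta$.

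You skip localization altogether. At an $\varepsilon$-near maximizer of the unpenalized difference, contact fails only at the centre node. Because the stencil is finite and the centre weight $\lambda+2dN/h$ is finite at fixed $h$, this costs only $(\lambda+2dN/h)\varepsilon$.

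Your argument is shorter and proves more. It never uses semicontinuity, so it gives comparison for arbitrary bounded pointwise sub- and supersolutions, exactly as in Corollary~\ref{cor:frozen_comparison}. In resolvent language, dividing your final inequality by $\lambda+2dN/h$ gives $(1-\beta_h)M\le\varepsilon$, since $1-\beta_h=\lambda/(\lambda+2dN/h)$. This is the estimate $M\le\beta_h\max\{M,0\}$ for $M=\sup(\tilde U-U)$, which the monotone contraction $T$ of Lemma~\ref{lem:contraction} yields directly from $\tilde U\le T\tilde U$ and $U\ge TU$.

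The paper's route keeps the structure of the classical viscosity comparison proof: exact contact at an attained maximum, with an error uniform in $h$. Your error instead degenerates as $h\downarrow0$, and the near-maximizer trick has no analogue for a local operator. Neither feature matters for the proposition at fixed $h$.

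One correction to your motivating remark: semicontinuity does help once it is paired with a coercive penalization, since that is precisely how the paper obtains attainment. It is simply unnecessary in your argument.
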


\begin{proof}
We argue by contradiction. Suppose
\[
m:=\sup_{x\in\Rd}(\tilde U(x)-U(x))>0.
\]
Let $\varphi(x):=\sqrt{1+|x|^2}$ and for $\delta>0$ define
\[
\Phi_\delta(x)
:=
\tilde U(x)-U(x)-\delta\varphi(x).
\]
{Since $\tilde U$ is upper semicontinuous and $U$ is lower
semicontinuous, $\Phi_\delta$ is upper semicontinuous; as $\varphi(x)\to\infty$
when $|x|\to\infty$ and $\tilde U,U$ are bounded, $\Phi_\delta$ attains its
maximum at some $x_\delta\in\Rd$.}
Set $M_\delta:=\Phi_\delta(x_\delta)$. Note that $M_\delta\to m$ as $\delta\downarrow0$,
and in particular $M_\delta>0$ for all sufficiently small $\delta$.
Define
\[
U^\delta(x):=U(x)+M_\delta+\delta\varphi(x).
\]
By construction,
\[
U^\delta(x_\delta)=\tilde U(x_\delta),
\]
and since $x_\delta$ maximizes $\Phi_\delta$,
\begin{equation}\label{eq:Ud_dominate}
\tilde U(y)\le U^\delta(y)
\qquad\text{for all }y\in\Rd.
\end{equation}
In particular,
\[
\tilde U(x_\delta\pm he_i)\le U^\delta(x_\delta\pm he_i)
\quad\text{for all }i=1, \cdots, d.
\]
Therefore, applying Lemma~\ref{lem:mono_operator} yields
\begin{equation}\label{eq:mono_step}
F_h[\tilde U](x_\delta)\ge F_h[U^\delta](x_\delta).
\end{equation}
Since $\tilde U$ is a subsolution, $F_h[\tilde U](x_\delta)\le0$ and hence
\begin{equation}\label{eq:Ud_ineq}
0\ge F_h[U^\delta](x_\delta).
\end{equation}
Since $\nabla_h$ and $\Delta_h$ map constants to zero, we have
\begin{equation}\label{eq:shift}
F_h[U^\delta](x_\delta)
=
F_h[U+\delta\varphi](x_\delta)
+
\lambda M_\delta.
\end{equation}
For each $a\in A$, define
\[
\mathcal S_a^h[\psi](x)
:=
- f(x,a)\cdot\nabla_h\psi(x)
- Nh\,\Delta_h\psi(x).
\]
Then, we have 
\[
F_h[W](x)
=
\lambda W(x)+\sup_{a\in A}
\big(
- c(x,a)
+
\mathcal S_a^h[W](x)
\big).
\]
Using $\sup(A+B)\ge \sup A+\inf B$, we obtain
\begin{equation}\label{eq:perturb}
\begin{aligned}
F_h[U+\delta\varphi](x_\delta)&=\lambda U(x_\delta)+\delta\lambda \varphi(x_\delta)+\sup_{a\in A} \big(-c(x_\delta,a)+\mathcal S_a^h[U](x_\delta)+\delta \mathcal S_a^h[\varphi](x_\delta)\big)\\
&\ge
F_h[U](x_\delta)
+
\delta\lambda\varphi(x_\delta)
+
\delta\inf_{a\in A}\mathcal S_a^h[\varphi](x_\delta).
\end{aligned}
\end{equation}
Since $\varphi$ has bounded first and second derivatives,
$\nabla_h\varphi$ and $\Delta_h\varphi$ are bounded uniformly in $h$.
Moreover, since $f$ is bounded and $N$ is fixed,
the term $Nh\,\Delta_h\varphi$ is uniformly bounded for $h\in(0,1)$.
Therefore, there exists $C>0$, independent of $h$ and $\delta$, such that
\[
\inf_{a\in A}\mathcal S_a^h[\varphi](x)\ge -C
\qquad\text{for all }x\in\Rd.
\]
Hence from \eqref{eq:perturb} and the fact that $\varphi(x)\ge 0$ for all $x\in\Rd$, we have
\[
F_h[U+\delta\varphi](x_\delta)
\ge
F_h[U](x_\delta)-C\delta.
\]
Since $U$ is a supersolution,
$F_h[U](x_\delta)\ge0$.
Therefore,
\[
F_h[U+\delta\varphi](x_\delta)\ge -C\delta.
\]
Substituting into \eqref{eq:shift} and using \eqref{eq:Ud_ineq}, we conclude
\[
0
\ge
F_h[U^\delta](x_\delta)
=
F_h[U+\delta\varphi](x_\delta)
+
\lambda M_\delta
\ge
-C\delta+\lambda M_\delta.
\]
Thus
\[
\lambda M_\delta\le C\delta.
\]
Letting $\delta\downarrow0$ yields
\[
\limsup_{\delta\downarrow0}M_\delta\le0,
\]
contradicting $M_\delta\to m>0$.
Therefore $m\le0$, and hence $\tilde U\le U$ in $\Rd$.
\end{proof}

% ============================================================
{
\subsection{Resolvent form and contraction}\label{sec:resolvent}
Before turning to well-posedness, we rewrite the semi-discrete operators in
resolvent form. This form makes the contraction structure explicit and yields
existence and uniqueness for both the frozen-policy equation and the nonlinear
Bellman equation directly from the Banach fixed-point theorem, without appealing
to Perron's method or to any compactness argument.

For $a\in A$ and bounded $U:\Rd\to\mathbb R$, set
\begin{equation}\label{eq:Talpha_def}
(\mathcal T_a U)(x)
:=
\frac{
c(x,a)
+\displaystyle\sum_{i=1}^d\Bigl(\frac Nh+\frac{f_i(x,a)}{2h}\Bigr)U(x+he_i)
+\displaystyle\sum_{i=1}^d\Bigl(\frac Nh-\frac{f_i(x,a)}{2h}\Bigr)U(x-he_i)
}{
\lambda+\frac{2dN}{h}
},
\end{equation}
and, for a bounded policy $\alpha:\Rd\to A$,
\begin{equation}\label{eq:T_def}
(T_\alpha U)(x):=(\mathcal T_{\alpha(x)}U)(x),
\qquad
(TU)(x):=\inf_{a\in A}(\mathcal T_aU)(x).
\end{equation}

\begin{lemma}\label{lem:contraction}
Assume Assumption~\ref{ass:A} and \eqref{eq:Nchoice_new}, and set
\[
\beta_h:=\frac{2dN/h}{\lambda+2dN/h}\in(0,1).
\]
{Let
\[
\ell^\infty(\Rd):=\Bigl\{U:\Rd\to\mathbb R:\ \|U\|_\infty:=\sup_{x\in\Rd}|U(x)|<\infty\Bigr\},
\]
which is a Banach space for $\|\cdot\|_\infty$.}
Then, for all {$U,W\in\ell^\infty(\Rd)$}, all $a\in A$ and all $x\in\Rd$,
\begin{equation}\label{eq:L_T_relation}
\mathcal L_a^h U(x)
=
\Bigl(\lambda+\frac{2dN}{h}\Bigr)\bigl(U(x)-(\mathcal T_aU)(x)\bigr),
\qquad
F_h[U](x)
=
\Bigl(\lambda+\frac{2dN}{h}\Bigr)\bigl(U(x)-T(U)(x)\bigr).
\end{equation}
In particular
\[
\mathcal L^h_\alpha U=0\iff U=T_\alpha U,
\qquad\qquad
F_h[U]=0\iff U=T(U),
\]
so that the solutions of the frozen-policy equation and of the Bellman equation
are exactly the fixed points of $T_\alpha$ and of $T$.
Moreover, for every bounded policy $\alpha:\Rd\to A$, both $T_\alpha$ and $T$
map {$\ell^\infty(\Rd)$} into itself, are monotone, and satisfy
\begin{equation}\label{eq:contraction}
\|T_\alpha U-T_\alpha W\|_\infty\le\beta_h\|U-W\|_\infty,
\qquad
\|T U-T W\|_\infty\le\beta_h\|U-W\|_\infty.
\end{equation}
{If in addition $\alpha$ is continuous, then $T_\alpha$ maps $C_b(\Rd)$ into
itself; and $T$ maps $C_b(\Rd)$ into itself for every $U\in C_b(\Rd)$, since $A$
is compact and $(x,a)\mapsto(\mathcal T_aU)(x)$ is jointly continuous.}
\end{lemma}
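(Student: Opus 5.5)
The plan is to obtain the identities \eqref{eq:L_T_relation} from the stencil form \eqref{eq:stencil_L} of Lemma~\ref{lem:mono_operator}, and then to derive monotonicity and contraction from the nonnegativity of the neighbour weights.

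\emph{Step 1: the identities.} Write $\kappa:=\lambda+2dN/h$. For fixed $a\in A$, the coefficients $a_i^\pm(x)$ in \eqref{eq:stencil_L} are exactly the negatives of the weights $w_i^\pm(x,a):=\frac Nh\pm\frac{f_i(x,a)}{2h}$ appearing in \eqref{eq:Talpha_def}. Hence
\[
\mathcal L_a^hU(x)=\kappa U(x)-\Bigl(c(x,a)+\sum_i w_i^+U(x+he_i)+\sum_i w_i^-U(x-he_i)\Bigr)=\kappa\bigl(U(x)-(\mathcal T_aU)(x)\bigr).
\]
Taking the supremum over $a$ and using $\kappa>0$ gives
\[
\sup_a\kappa\bigl(U-\mathcal T_aU\bigr)=\kappa\bigl(U-\inf_a\mathcal T_aU\bigr),
\]
which is the second identity. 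The equivalences with fixed points follow immediately since $\kappa\neq0$.

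\emph{Step 2: monotonicity and boundedness.} By \eqref{eq:Nchoice_new} the weights satisfy $w_i^\pm\ge0$, so $U\le W$ implies $\mathcal T_aU\le\mathcal T_aW$ pointwise. Taking $a=\alpha(x)$ gives monotonicity of $T_\alpha$, and taking the infimum over $a$ gives monotonicity of $T$. The weights also sum to $\sum_i(w_i^++w_i^-)=2dN/h$. Therefore
\[
|\mathcal T_aU(x)|\le \frac{\|c\|_\infty+(2dN/h)\|U\|_\infty}{\kappa},
\]
so $T_\alpha$ and $T$ map $\ell^\infty(\Rd)$ into itself. The infimum of this uniformly bounded family is finite.

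\emph{Step 3: contraction.} The cost term cancels in the difference, so
\[
|\mathcal T_aU(x)-\mathcal T_aW(x)|\le \kappa^{-1}\sum_i(w_i^++w_i^-)\|U-W\|_\infty=\beta_h\|U-W\|_\infty.
\]
This bound is uniform in $a$ and $x$, which gives the estimate for $T_\alpha$. For $T$, use $|\inf_a g_a-\inf_a k_a|\le\sup_a|g_a-k_a|$. Finally, $\beta_h\in(0,1)$ because $\lambda>0$.

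\emph{Step 4: continuity.} This is the only mildly delicate step. For $U\in C_b(\Rd)$, the map $(x,a)\mapsto(\mathcal T_aU)(x)$ is jointly continuous, because $f$ and $c$ are continuous and the shifts $U(\cdot\pm he_i)$ are continuous. If $\alpha$ is continuous, then composing with $x\mapsto(x,\alpha(x))$ shows $T_\alpha U\in C_b$. For $T$, compactness of $A$ is essential. Joint continuity on $K\times A$, with $K$ compact, gives uniform continuity there. So $x\mapsto\inf_a(\mathcal T_aU)(x)$ is a uniform-in-$a$ infimum of equicontinuous functions on $K$, hence continuous; alternatively one can invoke Berge's maximum theorem. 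Boundedness of $TU$ comes from Step 2.
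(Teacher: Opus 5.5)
Your proposal is correct and follows essentially the same route as the paper. The paper also matches the stencil coefficients with the weights $\frac Nh\pm\frac{f_i(x,a)}{2h}$, uses that these weights are nonnegative with total mass $2dN/h$ to get monotonicity and the $\beta_h$-contraction, applies $|\inf_a p_a-\inf_a q_a|\le\sup_a|p_a-q_a|$ for $T$, and invokes compactness of $A$ for continuity of $TU$. The only difference is that your equicontinuity (or Berge) argument spells out that last step, which the paper merely asserts.
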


\begin{proof}
Expanding $\nabla_h$ and $\Delta_h$ in \eqref{eq:Lalpha} and collecting the
coefficients of $U(x\pm he_i)$ gives
\[
\mathcal L_a^hU(x)
=
\Bigl(\lambda+\frac{2dN}{h}\Bigr)U(x)
-c(x,a)
-\sum_{i=1}^d\Bigl(\frac Nh+\frac{f_i(x,a)}{2h}\Bigr)U(x+he_i)
-\sum_{i=1}^d\Bigl(\frac Nh-\frac{f_i(x,a)}{2h}\Bigr)U(x-he_i),
\]
which is the first identity in \eqref{eq:L_T_relation}. Since the prefactor
$\lambda+\frac{2dN}{h}$ is positive and independent of $a$, and $U(x)$ does not
depend on $a$,
\[
F_h[U](x)=\sup_{a\in A}\mathcal L^h_aU(x)
=\Bigl(\lambda+\frac{2dN}{h}\Bigr)\Bigl(U(x)-\inf_{a\in A}(\mathcal T_aU)(x)\Bigr),
\]
which is the second identity.

By \eqref{eq:Nchoice_new} the weights
$w_i^\pm(x,a):=\frac Nh\pm\frac{f_i(x,a)}{2h}$ are nonnegative, and
\[
\sum_{i=1}^d\bigl(w_i^+(x,a)+w_i^-(x,a)\bigr)=\frac{2dN}{h}.
\]
Hence $\mathcal T_a$ is an affine map whose coefficients are nonnegative with
total mass $\frac{2dN}{h}\big/\bigl(\lambda+\frac{2dN}{h}\bigr)$, so it is
monotone and
\[
|\mathcal T_aU(x)-\mathcal T_aW(x)|
\le
\frac{2dN/h}{\lambda+2dN/h}\,\|U-W\|_\infty
=\beta_h\|U-W\|_\infty .
\]
Taking $a=\alpha(x)$ and then the supremum over $x$ gives the first estimate in
\eqref{eq:contraction}. For $T$ we use
$|\inf_a p_a-\inf_a q_a|\le\sup_a|p_a-q_a|$ pointwise, which yields the second.
Boundedness of $T_\alpha U$ and $TU$ follows from
$\|\mathcal T_aU\|_{\infty}\le\bigl(\|c\|_{L^\infty}+\tfrac{2dN}{h}\|U\|_{\infty}\bigr)\big/\bigl(\lambda+\tfrac{2dN}{h}\bigr)$.
{%
Finally, if $U\in C_b(\Rd)$ and $\alpha$ is continuous, then $T_\alpha U$ is a
finite sum of continuous functions and hence lies in $C_b(\Rd)$; and since $A$ is
compact and $(x,a)\mapsto(\mathcal T_aU)(x)$ is continuous by \ref{A1}, the
infimum $TU=\inf_{a\in A}\mathcal T_aU$ is attained and continuous in $x$.}
\end{proof}

\begin{corollary}\label{cor:banach}
Under the assumptions of Lemma~\ref{lem:contraction}, for every bounded policy
$\alpha$ the frozen-policy equation $\mathcal L^h_\alpha V=0$ has a unique
solution in {$\ell^\infty(\Rd)$}, and the Bellman equation $F_h[V^h]=0$ has a
unique solution {$V^h\in\ell^\infty(\Rd)$}; equivalently, $V^h$ is the unique fixed
point of $T$.
Both follow from the Banach fixed-point theorem applied to \eqref{eq:contraction}
on {$\bigl(\ell^\infty(\Rd),\|\cdot\|_\infty\bigr)$}, and the corresponding Picard
iterates converge uniformly on $\Rd$ at the geometric rate $\beta_h$.
{Uniqueness therefore holds among all bounded functions, not only among the
continuous ones.

If moreover $\alpha$ is continuous, then $C_b(\Rd)$ is a closed subspace left
invariant by $T_\alpha$ and by $T$, so both fixed points lie in $C_b(\Rd)$.}
\end{corollary}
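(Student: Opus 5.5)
The plan is to apply the Banach fixed-point theorem directly, since Lemma~\ref{lem:contraction} already supplies all the ingredients. The space $\bigl(\ell^\infty(\Rd),\|\cdot\|_\infty\bigr)$ is complete: a Cauchy sequence in the pointwise supremum norm converges uniformly to a bounded limit, and no a.e.\ identification is involved. For a fixed bounded policy $\alpha$, the map $T_\alpha$ sends $\ell^\infty(\Rd)$ into itself and is a $\beta_h$-contraction with $\beta_h<1$ by \eqref{eq:contraction}. Hence $T_\alpha$ has a unique fixed point in $\ell^\infty(\Rd)$, and the Picard iterates $T_\alpha^kU_0$ converge to it uniformly. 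They satisfy the standard bound $\|T_\alpha^kU_0-V\|_\infty\le\beta_h^k\|U_0-V\|_\infty$. The equivalence $\mathcal L_\alpha^hU=0\iff U=T_\alpha U$ from \eqref{eq:L_T_relation} then converts this into existence and uniqueness for the frozen-policy equation among all bounded functions. The identical argument with $T$ in place of $T_\alpha$, using the second estimate in \eqref{eq:contraction} and $F_h[U]=0\iff U=T(U)$, gives the unique bounded solution $V^h$ of the Bellman equation.

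For the continuity statement, assume $\alpha$ is continuous. First, $C_b(\Rd)$ is a closed subspace of $\ell^\infty(\Rd)$, because uniform limits of continuous functions are continuous. By the last assertion of Lemma~\ref{lem:contraction}, both $T_\alpha$ and $T$ leave $C_b(\Rd)$ invariant. Restricting each map to the complete metric space $C_b(\Rd)$ gives a contraction there, so each has a fixed point in $C_b(\Rd)$. Alternatively, start the Picard iteration at $U_0=0\in C_b(\Rd)$: every iterate stays in $C_b(\Rd)$, and the uniform limit therefore does as well. Uniqueness in $\ell^\infty(\Rd)$ forces this continuous fixed point to coincide with the one found above.

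The only point requiring care is the continuity of $T U=\inf_{a\in A}\mathcal T_aU$ for $U\in C_b(\Rd)$, which the lemma already asserts. It follows from joint continuity of $(x,a)\mapsto\mathcal T_aU(x)$ together with compactness of $A$, via the standard argument that an infimum over a compact parameter set of a jointly continuous function is continuous. There are thus no genuine obstacles: the proof is a direct invocation of the contraction mapping principle together with the closedness of $C_b(\Rd)$.
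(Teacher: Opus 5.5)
Your proposal is correct and is essentially the paper's argument. The corollary is justified there exactly by applying the Banach fixed-point theorem to the $\beta_h$-contractions $T_\alpha$ and $T$ on $\bigl(\ell^\infty(\Rd),\|\cdot\|_\infty\bigr)$, translating fixed points into solutions via \eqref{eq:L_T_relation}, and then using that $C_b(\Rd)$ is a closed subspace left invariant by both maps (the last assertion of Lemma~\ref{lem:contraction}); your remarks on completeness of $\ell^\infty(\Rd)$ and on continuity of $x\mapsto\inf_{a\in A}(\mathcal T_aU)(x)$ via compactness of $A$ only make explicit what the paper takes for granted.
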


\begin{corollary}\label{cor:frozen_comparison}
Under the assumptions of Lemma~\ref{lem:contraction}, let $\alpha$ be any
bounded policy and let $U,W\in\ell^\infty(\Rd)$ satisfy
$\mathcal L_\alpha^hU\le0\le\mathcal L_\alpha^hW$ pointwise.
Then $U\le W$.
\end{corollary}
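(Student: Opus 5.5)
The plan is to reduce the statement to the resolvent form of Lemma~\ref{lem:contraction} and then use monotonicity and contraction of $T_\alpha$. No semicontinuity is needed. This matters because Proposition~\ref{prop:comparison} assumes semicontinuity, and here $U,W$ are merely bounded.

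First, by the identity \eqref{eq:L_T_relation} with the positive prefactor $\lambda+2dN/h$, the hypotheses $\mathcal L_\alpha^hU\le0\le\mathcal L_\alpha^hW$ are equivalent to
\[
U\le T_\alpha U,\qquad T_\alpha W\le W\quad\text{pointwise on }\Rd.
\]
Let $V_\alpha\in\ell^\infty(\Rd)$ be the unique fixed point of $T_\alpha$ given by Corollary~\ref{cor:banach}.

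Second, iterate using the monotonicity of $T_\alpha$ from Lemma~\ref{lem:contraction}. From $U\le T_\alpha U$ we get $T_\alpha U\le T_\alpha^2U$, and by induction
\[
U\le T_\alpha^kU\quad\text{for all }k\ge1.
\]
Symmetrically, $T_\alpha^kW\le W$ for all $k\ge1$.

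Third, the Picard iterates converge: by the contraction estimate \eqref{eq:contraction},
\[
\|T_\alpha^kU-V_\alpha\|_\infty\le\beta_h^k\|U-V_\alpha\|_\infty\to0,
\]
and likewise $T_\alpha^kW\to V_\alpha$ uniformly. Passing to the pointwise limit in the two inequalities gives $U\le V_\alpha\le W$, which is the claim.

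An alternative argument avoids iteration. Set $m:=\sup(U-W)$ and suppose $m>0$. Using $U\le W+m$, monotonicity, and the fact that $T_\alpha$ maps constants $k$ to $\beta_h k$ plus the unchanged affine part, we get
\[
U\le T_\alpha U\le T_\alpha W+\beta_h m\le W+\beta_h m,
\]
so $m\le\beta_h m$, which contradicts $\beta_h<1$. We would present this version, since it is shorter. The only step needing care is the claim $T_\alpha(W+m)=T_\alpha W+\beta_h m$: the affine map has neighbour weights summing to $\tfrac{2dN/h}{\lambda+2dN/h}$, which is exactly the computation in the proof of Lemma~\ref{lem:contraction}. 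Monotonicity there rests on nonnegativity of the weights under \eqref{eq:Nchoice_new}. No supremum-attainment issue arises, because we work only with the supremum $m$ and never with a maximizer. We do not expect a genuine obstacle.
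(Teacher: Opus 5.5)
Your proposal is correct. Your first argument is exactly the paper's proof: rewrite the hypotheses as $U\le T_\alpha U$ and $T_\alpha W\le W$, iterate by monotonicity, and pass to the limit along the Picard sequences, which converge to the fixed point of Corollary~\ref{cor:banach}.

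The version you say you would present is a genuinely different and more elementary route. Instead of passing through the fixed point $V_\alpha$, it applies $T_\alpha$ once and uses the exact shift identity $T_\alpha(W+k)=T_\alpha W+\beta_h k$ to get $\sup(U-W)\le\beta_h\sup(U-W)$. This needs neither the existence of a fixed point nor the completeness of $\ell^\infty(\Rd)$. It is the supremum-level form of the maximum-principle argument in Proposition~\ref{prop:comparison}, with no maximizer and hence no semicontinuity or $\delta\varphi$ penalization. It also extends verbatim to a bounded source: if $\mathcal L_\alpha^hU-\mathcal L_\alpha^hW\le g$, the same computation with $1-\beta_h=\lambda/(\lambda+2dN/h)$ gives $\sup(U-W)\le\lambda^{-1}\sup g$. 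That is precisely the barrier estimate invoked in Proposition~\ref{prop:local_quadratic}.

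The paper's route instead yields the sandwich $U\le V_\alpha\le W$ around the frozen-policy solution, and it reuses the Picard template that appears again in Proposition~\ref{prop:mono}. For the comparison statement itself, your direct argument is equally strong and shorter.
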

\begin{proof}
By \eqref{eq:L_T_relation}, $U\le T_\alpha U$ and $W\ge T_\alpha W$.
Monotonicity gives $U\le T_\alpha^kU$ and $W\ge T_\alpha^kW$ for every $k$.
Both Picard sequences converge uniformly to the unique fixed point of
$T_\alpha$, proving the assertion without any regularity requirement on
$x\mapsto\alpha(x)$.
\end{proof}

Throughout, we therefore speak of $V^h$ as the unique \emph{zero} of $F_h$ and,
equivalently, as the unique \emph{fixed point} of $T$.
}
% ============================================================

% ============================================================
\subsection{Well-posedness and monotonicity of semi-discrete PI}\label{sec:wellposed}
We now establish the well-posedness of the semi-discrete policy iteration scheme
and its basic structural properties. In particular, we show that each policy evaluation step admits a unique bounded solution, and that the resulting value sequence generated by policy iteration is monotone and converges to the unique solution of the semi-discrete Bellman equation.

\begin{proposition}\label{prop:wellposed}
Suppose that Assumption~\ref{ass:A} and \eqref{eq:Nchoice_new} hold.
For each bounded {continuous} policy $\alpha_n$, there exists a unique bounded {solution} $V_n^h\in C_b(\Rd)$
to \eqref{eq:PI_eval_h}. Moreover, the following estimate
\begin{equation}\label{eq:Linfty_bound}
\norm{V_n^h}_{L^\infty(\Rd)}\le \frac{\norm{c}_{L^\infty(\Rd\times A)}}{\lambda}
\end{equation}
holds for all $n\in \mathbb{N}$ and $h\in (0,1)$.
\end{proposition}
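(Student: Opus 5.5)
Existence, uniqueness and continuity come directly from the resolvent reformulation of Section~\ref{sec:resolvent}. By Lemma~\ref{lem:contraction}, the equation \eqref{eq:PI_eval_h} is equivalent to the fixed-point equation $V_n^h=T_{\alpha_n}V_n^h$. The map $T_{\alpha_n}$ is a $\beta_h$-contraction on $\ell^\infty(\Rd)$ with $\beta_h<1$, so Corollary~\ref{cor:banach} yields a unique bounded solution. Since $\alpha_n$ is continuous, $T_{\alpha_n}$ leaves the closed subspace $C_b(\Rd)$ invariant, and hence this unique solution lies in $C_b(\Rd)$. Uniqueness holds among all bounded functions, so there is no ambiguity about which solution is meant.

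For the bound \eqref{eq:Linfty_bound}, I will compare $V_n^h$ with the constants $\pm K$, where $K:=\|c\|_{L^\infty(\Rd\times A)}/\lambda$. The difference operators $\nabla_h$ and $\Delta_h$ annihilate constants, so
\[
\mathcal L_{\alpha_n}^h K=\lambda K-c_{\alpha_n}(x)=\|c\|_\infty-c_{\alpha_n}(x)\ge 0,
\qquad
\mathcal L_{\alpha_n}^h(-K)=-\|c\|_\infty-c_{\alpha_n}(x)\le 0 .
\]
Thus $K$ is a supersolution and $-K$ a subsolution of the frozen-policy equation, while $V_n^h$ satisfies $\mathcal L_{\alpha_n}^hV_n^h=0$. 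Corollary~\ref{cor:frozen_comparison} then gives $-K\le V_n^h\le K$. This corollary needs no regularity of the policy; alternatively, Proposition~\ref{prop:comparison} adapted to a frozen policy would also work.

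There is also a more direct route that avoids comparison. From $V=T_\alpha V$ one gets
\[
\|V\|_\infty\le\frac{\|c\|_\infty+\frac{2dN}{h}\|V\|_\infty}{\lambda+\frac{2dN}{h}}.
\]
Rearranging gives $\lambda\|V\|_\infty\le\|c\|_\infty$, and I would mention this as the cleanest verification.

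The bound is independent of $n$ and $h$ because neither $\alpha_n$ nor $h$ enters the constant $\|c\|_\infty/\lambda$. There is no genuine obstacle. The only point needing care is that the sup-norm here is pointwise. Since $V_n^h$ is continuous, this coincides with the $L^\infty$ norm written in the statement.
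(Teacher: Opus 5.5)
Your proof is correct and follows the paper's argument. Existence, uniqueness and continuity come from Corollary~\ref{cor:banach} applied to the contraction $T_{\alpha_n}$, and the bound comes from the constant barriers $\pm\|c\|_{L^\infty(\Rd\times A)}/\lambda$ together with Corollary~\ref{cor:frozen_comparison}. Your extra direct estimate from $V_n^h=T_{\alpha_n}V_n^h$ is also valid and yields the same bound without invoking comparison.
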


\begin{proof}
% Let $\mathcal{L}_{\alpha_n}^h U:=\lambda U + c(\cdot,\alpha_n(\cdot)) + \nabla_h U\cdot f(\cdot,\alpha_n(\cdot)) + Nh\Delta_h U$.
% Monotonicity (from \eqref{eq:Nchoice_new}) implies a comparison principle for bounded sub/super-solutions.
Let $M:=\|c\|_{L^\infty(\Rd\times A)}/\lambda$.
Then $\mathcal{L}_{\alpha_n}^h(M)\ge 0$ and $\mathcal{L}_{\alpha_n}^h(-M)\le 0$.
Hence $M$ is a supersolution and $-M$ is a subsolution.
By Corollary~\ref{cor:frozen_comparison}, the estimate \eqref{eq:Linfty_bound} follows.
{Existence and uniqueness follow from Corollary~\ref{cor:banach}:
by Lemma~\ref{lem:contraction}, solving \eqref{eq:PI_eval_h} amounts to finding
the fixed point of the contraction $T_{\alpha_n}$ on $\ell^\infty(\Rd)$, so the
Banach fixed-point theorem applies; since $\alpha_n$ is continuous, the solution
lies in $C_b(\Rd)$.}
\end{proof}

To analyze the policy improvement step and the convergence of the associated policy sequence, we introduce an additional structural assumption on the policy map.

\begin{assumption}\label{ass:policy}
For each $(x,p)\in\Rd\times\Rd$, the minimization problem
\[
\min_{a\in A}\{c(x,a)+p\cdot f(x,a)\}
\]
admits a unique minimizer.
Moreover the induced policy map
\[
\alpha(x,p):=\argmin_{a\in A}\{c(x,a)+p\cdot f(x,a)\}
\]
is globally Lipschitz continuous in $(x,p)$. We denote its global Lipschitz constant by $L_\alpha > 0$.
\end{assumption}

{%
Under this additional assumption we establish monotonicity of the policy
iteration sequence. Convergence is \emph{not} deduced from monotonicity: it
follows separately from the contraction estimate \eqref{eq:contraction}, in
Theorem~\ref{thm:fixed_h_supnorm}.

\begin{proposition}\label{prop:mono}
Suppose that Assumption~\ref{ass:A}, \ref{ass:policy} hold and $N$ satisfies \eqref{eq:Nchoice_new}, and let $V^h$ be the unique bounded solution of \eqref{eq:HJBh_new} given by Corollary~\ref{cor:banach}. Then for all $n\ge0$,
\[
V^h(x)\le V_{n+1}^h(x)\le V_n^h(x)\qquad \forall x\in\Rd.
\]
\end{proposition}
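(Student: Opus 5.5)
The plan is the classical Howard argument, carried out with the frozen-policy comparison of Corollary~\ref{cor:frozen_comparison}, which needs no regularity of the policy, together with the Bellman comparison obtained from the contraction structure. First I would record that every iterate is well defined: by Proposition~\ref{prop:wellposed} and induction, $\alpha_n$ is continuous and $V_n^h\in C_b(\Rd)$. I would also note that, by Assumption~\ref{ass:policy} and the stencil form \eqref{eq:stencil_L}, the improved policy $\alpha_{n+1}(x)=\alpha(x,\nabla_hV_n^h(x))$ attains the supremum in the Bellman operator at the function $V_n^h$. Indeed, $\mathcal L_a^hU(x)=\lambda U(x)-Nh\Delta_hU(x)-\bigl(c(x,a)+f(x,a)\cdot\nabla_hU(x)\bigr)$, and only the last bracket depends on $a$. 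Maximizing over $a$ is therefore exactly minimizing $c(x,a)+p\cdot f(x,a)$ with $p=\nabla_hU(x)$. Hence
\[
\mathcal L^h_{\alpha_{n+1}}V_n^h(x)=F_h[V_n^h](x)\qquad\forall x\in\Rd .
\]

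\textbf{Step 1 (monotone decrease).} Since $\alpha_n(x)\in A$, we have $F_h[V_n^h](x)\ge\mathcal L^h_{\alpha_n}V_n^h(x)=0$. Combined with the identity above, this gives $\mathcal L^h_{\alpha_{n+1}}V_n^h\ge0=\mathcal L^h_{\alpha_{n+1}}V_{n+1}^h$ pointwise. Both functions lie in $\ell^\infty(\Rd)$, so Corollary~\ref{cor:frozen_comparison}, applied with the policy $\alpha_{n+1}$, $U=V_{n+1}^h$ and $W=V_n^h$, yields $V_{n+1}^h\le V_n^h$.

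\textbf{Step 2 (lower bound by $V^h$).} For every $m\ge0$, the previous step shows $F_h[V_m^h]\ge0$, so each $V_m^h$ is a bounded supersolution of \eqref{eq:Bellman_eq}. Meanwhile $V^h$ is a solution, hence a subsolution. Proposition~\ref{prop:comparison} would give $V^h\le V_m^h$, but it requires semicontinuity. Both $V_m^h$ and $V^h$ are continuous by Corollary~\ref{cor:banach} and the invariance of $C_b$, so the proposition applies. To avoid any regularity issue altogether, I would instead use the resolvent form. By \eqref{eq:L_T_relation}, $F_h[V_m^h]\ge0$ means $V_m^h\ge TV_m^h$. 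Monotonicity of $T$ then gives $V_m^h\ge T^kV_m^h$ for all $k$, and by Corollary~\ref{cor:banach} the right-hand side converges uniformly to $V^h$, so $V^h\le V_m^h$. Applying this with $m=n+1$ yields the left inequality.

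The main obstacle is Step 1's identification of the greedy policy with the pointwise maximizer of $\mathcal L^h_a V_n^h(x)$. One must check that the artificial-viscosity and $\lambda$ terms are independent of $a$, and that a maximizer exists. Existence is guaranteed by the compactness of $A$ and by uniqueness of the minimizer from Assumption~\ref{ass:policy}. Once this identity is in place, the remainder reduces to the two comparison results already available.
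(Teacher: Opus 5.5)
Your proof is correct and follows the paper's: Step 1 is exactly its Howard improvement argument, closed by Corollary~\ref{cor:frozen_comparison}. For the lower bound you run the mirror-image comparison, treating $V_m^h$ as a supersolution of the nonlinear problem ($V_m^h\ge TV_m^h$) and iterating $T$ down to $V^h$, whereas the paper treats $V^h$ as a subsolution of the frozen problem ($V^h=TV^h\le T_{\alpha_n}V^h$) and iterates $T_{\alpha_n}$ up to $V_n^h$; both versions rest only on the monotonicity and contraction of Lemma~\ref{lem:contraction}, so neither has an advantage.
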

}

\begin{proof}
By optimality of the improvement step \eqref{eq:PI_improve_h},
\[
- c(x,\alpha_{n+1}(x))
- \nabla_h V_n^h(x)\cdot f(x,\alpha_{n+1}(x))
\ge
- c(x,\alpha_n(x))
- \nabla_h V_n^h(x)\cdot f(x,\alpha_n(x)).
\]
Using the evaluation equation \eqref{eq:PI_eval_h} satisfied by $V_n^h$,
we obtain
\[
\mathcal{L}_{\alpha_{n+1}}^h V_n^h(x)
\ge
\mathcal{L}_{\alpha_n}^h V_n^h(x)
=
0.
\]
Thus $V_n^h$ is a supersolution of the evaluation equation with policy $\alpha_{n+1}$.
Since $V_{n+1}^h$ is the unique solution of
\[
\mathcal{L}_{\alpha_{n+1}}^h V_{n+1}^h = 0,
\]
the frozen-policy comparison principle, Corollary~\ref{cor:frozen_comparison}, yields
$V_{n+1}^h \le V_n^h$  in $\Rd$.

{%
It remains to prove the lower bound $V^h\le V_n^h$. Since
$V^h=T(V^h)=\inf_{a\in A}\mathcal T_aV^h\le T_{\alpha_n}V^h$, the function $V^h$
is a subsolution of the fixed-point problem for $T_{\alpha_n}$. As
$T_{\alpha_n}$ is monotone by Lemma~\ref{lem:contraction}, the Picard iterates
$T_{\alpha_n}^kV^h$ are nondecreasing in $k$, and by \eqref{eq:contraction} they
converge uniformly to the unique fixed point $V_n^h$ of $T_{\alpha_n}$. Hence
\[
V^h\le T_{\alpha_n}^kV^h\ \xrightarrow[k\to\infty]{}\ V_n^h ,
\]
which gives $V^h\le V_n^h$ for every $n\ge0$.
}
\end{proof}

This establishes that the semi-discrete policy iteration scheme is well posed
and monotone. Convergence to the unique Bellman solution is proved in
Theorem~\ref{thm:fixed_h_supnorm}.

% ============================================================
\section{Convergence results}
In this section, we analyze the convergence properties of the semi-discrete
policy iteration scheme. We first establish geometric convergence of the value iterates for fixed mesh size $h$, based on a contraction property induced by the discounted resolvent structure. We then quantify the discretization error as $h\to0$, and combine the two results to obtain a unified error estimate that reveals the interaction between the iteration count and the spatial discretization.

\subsection{Fixed point arguments}\label{sec:expconv}
We combine the fixed-point formulation of policy evaluation with the
policy-improvement identity below. Together with monotonicity, the resolvent
contraction yields geometric convergence of the value iterates for fixed $h$.

{
\begin{lemma}
\label{lem:fixed_point_representation}
Assume Assumption~\ref{ass:A}, \ref{ass:policy} and \eqref{eq:Nchoice_new}, and
let $\mathcal T_a$, $T_\alpha$, $T$ be as in
\eqref{eq:Talpha_def}--\eqref{eq:T_def}.
If \(\alpha_{n+1}\) is defined by \eqref{eq:PI_improve_h}, then
\begin{equation}\label{eq:T_improvement_identity}
T(V_n^h)=T_{\alpha_{n+1}}V_n^h.
\end{equation}
\end{lemma}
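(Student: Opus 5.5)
The identity is pointwise. I would fix $x\in\Rd$ and show that $a=\alpha_{n+1}(x)$ attains the infimum defining $(TV_n^h)(x)=\inf_{a\in A}(\mathcal T_aV_n^h)(x)$. Taking $U=V_n^h$ in \eqref{eq:Talpha_def} and expanding the neighbour terms, the weights $\frac Nh\pm\frac{f_i(x,a)}{2h}$ split into an $a$-independent part and an $a$-dependent part:
\[
\sum_{i=1}^d\frac Nh\bigl(U(x+he_i)+U(x-he_i)\bigr)
\]
and
\[
\sum_{i=1}^d\frac{f_i(x,a)}{2h}\bigl(U(x+he_i)-U(x-he_i)\bigr)=f(x,a)\cdot\nabla_hU(x).
\]
This gives
\[
(\mathcal T_aV_n^h)(x)=\frac{c(x,a)+f(x,a)\cdot\nabla_hV_n^h(x)+\frac Nh\sum_{i=1}^d\bigl(V_n^h(x+he_i)+V_n^h(x-he_i)\bigr)}{\lambda+\frac{2dN}{h}}.
\]

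The denominator is a positive constant independent of $a$, and the $\frac Nh$-sum does not depend on $a$. Minimizing $a\mapsto(\mathcal T_aV_n^h)(x)$ over $A$ is therefore equivalent to minimizing $a\mapsto c(x,a)+p\cdot f(x,a)$ with $p=\nabla_hV_n^h(x)$. By Assumption~\ref{ass:policy}, this problem has the unique minimizer $\alpha(x,p)$, which equals $\alpha_{n+1}(x)$ by \eqref{eq:PI_improve_h}. Existence of a minimizer would also follow from the compactness of $A$ in \ref{A2} and continuity in $a$ from \ref{A1}; uniqueness is not needed for the identity.

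Hence $\inf_{a\in A}(\mathcal T_aV_n^h)(x)=(\mathcal T_{\alpha_{n+1}(x)}V_n^h)(x)=(T_{\alpha_{n+1}}V_n^h)(x)$ by \eqref{eq:T_def}. Since $x$ was arbitrary, \eqref{eq:T_improvement_identity} follows.

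There is no real obstacle. The only points to check are the algebraic regrouping of the centered-difference weights into $f\cdot\nabla_h$, and that $V_n^h$ is a genuine bounded function, which Proposition~\ref{prop:wellposed} guarantees. Boundedness ensures that the stencil values are finite and that $T_{\alpha_{n+1}}$ is well defined, with $\alpha_{n+1}$ a bounded policy taking values in $A$.
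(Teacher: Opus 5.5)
Your proposal is correct and follows essentially the same route as the paper: expand $\mathcal T_aV_n^h(x)$ so that only $c(x,a)+f(x,a)\cdot\nabla_hV_n^h(x)$ depends on $a$, then identify the minimizer with $\alpha_{n+1}(x)$ via Assumption~\ref{ass:policy}. Your side remark is also right: the identity only uses that $\alpha_{n+1}(x)$ is a minimizer, not that the minimizer is unique.
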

}

The minimization defining $T$ is consistent with the maximization defining
$F_h$: by \eqref{eq:L_T_relation}, $F_h[U]$ is the positive multiple
$(\lambda+2dN/h)(U-TU)$.

\begin{proof}
For \(U=V_n^h\),
\[
(\mathcal T_aV_n^h)(x)
=
\frac{
c(x,a)
+\frac Nh\sum_{i=1}^d\bigl(V_n^h(x+he_i)+V_n^h(x-he_i)\bigr)
+f(x,a)\cdot \nabla_h V_n^h(x)
}{
\lambda+\frac{2dN}{h}
}.
\]
For fixed \(x\), the denominator and the middle term
\[
\frac Nh\sum_{i=1}^d\bigl(V_n^h(x+he_i)+V_n^h(x-he_i)\bigr)
\]
do not depend on \(a\). Hence minimizing \((\mathcal T_aV_n^h)(x)\) over \(a\in A\)
is equivalent to minimizing
\[
a\mapsto c(x,a)+f(x,a)\cdot \nabla_h V_n^h(x).
\]
{Assumption~\ref{ass:A}\ref{A2} guarantees that the minimum is
attained, and Assumption~\ref{ass:policy} that the minimizer is unique; by the
definition \eqref{eq:PI_improve_h} it is precisely \(\alpha_{n+1}(x)\).} Therefore,
\[
T(V_n^h)(x)
=
\inf_{a\in A}(\mathcal T_aV_n^h)(x)
=
(\mathcal T_{\alpha_{n+1}(x)}V_n^h)(x)
=
(T_{\alpha_{n+1}}V_n^h)(x).
\]
This proves \eqref{eq:T_improvement_identity}.
\end{proof}

\begin{theorem}
\label{thm:fixed_h_supnorm}
Assume Assumption~\ref{ass:A}{, \ref{ass:policy}} and \eqref{eq:Nchoice_new}. Fix \(h\in(0,1)\).
Let \(\{V_n^h\}_{n\ge0}\) be generated by \eqref{eq:PI_eval_h}--\eqref{eq:PI_improve_h},
and let \(V^h\) be the unique bounded solution of \eqref{eq:HJBh_new}. Then for 
$$
\beta_h:=\frac{2dN/h}{\lambda+2dN/h}\in(0,1)
$$
we have
\[
\|V_n^h-V^h\|_{L^\infty(\Rd)}
\le
\beta_h^{\,n}\,\|V_0^h-V^h\|_{L^\infty(\Rd)}\le \frac{2\|c\|_{L^\infty(\Rd\times A)}}{\lambda}\beta_h^n.
\]
\end{theorem}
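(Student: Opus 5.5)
The plan is a sandwich argument. We show that each new iterate $V_{n+1}^h$ lies between $V^h$ and $T(V_n^h)$, and then use the contraction of $T$ to get the one-step estimate
\[
0\le V_{n+1}^h-V^h\le \beta_h\,(V_n^h-V^h).
\]

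\textbf{Lower bound.} The inequality $V^h\le V_{n+1}^h$ is already contained in Proposition~\ref{prop:mono}.

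\textbf{Upper bound.} We will show $V_{n+1}^h\le T(V_n^h)$. By Lemma~\ref{lem:fixed_point_representation}, $T(V_n^h)=T_{\alpha_{n+1}}V_n^h$. Proposition~\ref{prop:mono} gives $V_{n+1}^h\le V_n^h$. Since $T_{\alpha_{n+1}}$ is monotone (Lemma~\ref{lem:contraction}) and $V_{n+1}^h$ is its fixed point,
\[
V_{n+1}^h=T_{\alpha_{n+1}}V_{n+1}^h\le T_{\alpha_{n+1}}V_n^h=T(V_n^h).
\]

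\textbf{Contraction step.} Combining the two bounds with $V^h=T(V^h)$ yields
\[
0\le V_{n+1}^h-V^h\le T(V_n^h)-T(V^h).
\]
Taking sup norms and applying \eqref{eq:contraction} gives $\|V_{n+1}^h-V^h\|_\infty\le\beta_h\|V_n^h-V^h\|_\infty$. Induction on $n$ then gives the first inequality. Here the nonnegativity $V_{n+1}^h-V^h\ge 0$ is what allows passing from the one-sided pointwise bound to a sup-norm bound.

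\textbf{Constant.} For the second inequality we use Proposition~\ref{prop:wellposed}, which gives $\|V_0^h\|_\infty\le\|c\|_\infty/\lambda$. For $V^h$ we need the same bound. The constants $\pm M$ with $M=\|c\|_\infty/\lambda$ satisfy $T(M)\le M$ and $T(-M)\ge -M$, because $\mathcal T_a(\pm M)=(c\pm 2dNM/h)/(\lambda+2dN/h)$. Iterating the monotone map $T$ from $\pm M$ and passing to the limit (Corollary~\ref{cor:banach}) gives $-M\le V^h\le M$. The triangle inequality then gives $\|V_0^h-V^h\|_\infty\le 2M$.

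\textbf{Well-posedness of the iterates.} Every step above requires $V_n^h$ to be well defined. By the continuity-propagation remark after the initialization paragraph, each $\alpha_n$ is continuous and bounded. Hence Proposition~\ref{prop:wellposed} applies at each $n$.

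The main obstacle is the upper comparison $V_{n+1}^h\le T(V_n^h)$. It is the only step where policy improvement genuinely enters, through Lemma~\ref{lem:fixed_point_representation}, and it needs the monotonicity $V_{n+1}^h\le V_n^h$ from Proposition~\ref{prop:mono}. Once these are in place, the rest is routine.
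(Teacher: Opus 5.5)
Your proposal is correct and follows the paper's proof essentially step for step: the sandwich $V^h\le V_{n+1}^h=T_{\alpha_{n+1}}V_{n+1}^h\le T_{\alpha_{n+1}}V_n^h=T(V_n^h)$ via Proposition~\ref{prop:mono} and Lemma~\ref{lem:fixed_point_representation}, then the sup-norm contraction of $T$, then the constant barriers $\pm\|c\|_\infty/\lambda$ for the final bound. One slip is in your opening summary, where the one-step estimate is written pointwise as $V_{n+1}^h-V^h\le\beta_h(V_n^h-V^h)$; since $T$ couples neighbouring stencil values, your argument only gives $\|V_{n+1}^h-V^h\|_\infty\le\beta_h\|V_n^h-V^h\|_\infty$, which is what your contraction step correctly uses.
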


\begin{proof}
{%
By Lemma~\ref{lem:contraction}, $T_\alpha$ and $T$ are monotone and are
contractions on {$\ell^\infty(\Rd)$} with factor $\beta_h$, and $V^h$ is the unique
fixed point of $T$.
}

By Lemma~\ref{lem:fixed_point_representation},
\[
T(V_n^h)=T_{\alpha_{n+1}}V_n^h,
\qquad
V_{n+1}^h=T_{\alpha_{n+1}}V_{n+1}^h,
\qquad
V^h=T(V^h).
\]
By Proposition~\ref{prop:mono}, we have
\[
V^h\le V_{n+1}^h\le V_n^h
\qquad \text{in }\Rd.
\]
Since \(T_{\alpha_{n+1}}\) is monotone,
\[
V_{n+1}^h
=
T_{\alpha_{n+1}}V_{n+1}^h
\le
T_{\alpha_{n+1}}V_n^h
=
T(V_n^h).
\]
Therefore,
\[
0\le V_{n+1}^h-V^h\le T(V_n^h)-T(V^h),
\]
Since each $\mathcal T_a$ is monotone and $T=\inf_{a\in A}\mathcal T_a$,
the operator $T$ is monotone. Taking the \(L^\infty\) norm and using the contraction of \(T\),
\[
\|V_{n+1}^h-V^h\|_{L^\infty(\Rd)}
\le
\beta_h\,\|V_n^h-V^h\|_{L^\infty(\Rd)}.
\]
Iteration gives
\[
\|V_n^h-V^h\|_{L^\infty(\Rd)}
\le
\beta_h^{\,n}\,\|V_0^h-V^h\|_{L^\infty(\Rd)}.
\]

{Finally, Proposition~\ref{prop:wellposed} bounds $\|V_0^h\|_{L^\infty}$,
while the same bound for $V^h$ follows by comparing \eqref{eq:HJBh_new} with the
constant barriers $\pm\|c\|_{L^\infty(\Rd\times A)}/\lambda$, which are
respectively a super- and a subsolution; together they give}
\[
\|V_0^h\|_{L^\infty(\Rd)}+\|V^h\|_{L^\infty(\Rd)}
\le
\frac{2\|c\|_{L^\infty(\Rd\times A)}}{\lambda},
\]
hence
\[
\|V_0^h-V^h\|_{L^\infty(\Rd)}
\le
\frac{2\|c\|_{L^\infty(\Rd\times A)}}{\lambda}.
\]
\end{proof}

{
\begin{proposition}\label{prop:local_quadratic}
Under the assumptions of Theorem~\ref{thm:fixed_h_supnorm}, let $L_{f,a}$
be a uniform Lipschitz constant of $f(x,\cdot)$, as supplied by \ref{A1}.
Then
\begin{equation}\label{eq:local_quadratic}
\|V_{n+1}^h-V^h\|_\infty
\le \frac{dL_{f,a}L_\alpha}{\lambda h^2}
       \|V_n^h-V^h\|_\infty^2.
\end{equation}
Consequently, the globally convergent sequence enters a local quadratic convergence
regime for every fixed $h$. The constant is not uniform as $h\downarrow0$.
\end{proposition}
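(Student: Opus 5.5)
Let $e_n:=V_n^h-V^h\ge0$ (Proposition~\ref{prop:mono}) and let $\alpha^*(x):=\alpha(x,\nabla_hV^h(x))$ be the optimal policy. By Lemma~\ref{lem:fixed_point_representation} applied to $V^h$, $V^h=TV^h=T_{\alpha^*}V^h$. The plan is to bound $e_{n+1}$ using the frozen-policy equation for $\alpha_{n+1}$, and to control the mismatch between $\alpha_{n+1}$ and $\alpha^*$ through the Lipschitz policy map.

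First, write $V_{n+1}^h-V^h=T_{\alpha_{n+1}}V_{n+1}^h-T_{\alpha_{n+1}}V^h+(T_{\alpha_{n+1}}V^h-T_{\alpha^*}V^h)$. The first difference is at most $\beta_h\|e_{n+1}\|_\infty$. For the second, set $\kappa:=\lambda+2dN/h$. Then
\[
\kappa\bigl(T_{\alpha_{n+1}}V^h-T_{\alpha^*}V^h\bigr)(x)=g(x,\alpha_{n+1}(x);p^*)-g(x,\alpha^*(x);p^*),
\]
where $g(x,a;p):=c(x,a)+f(x,a)\cdot p$ and $p^*=\nabla_hV^h(x)$.

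The key step is a first-order cancellation. Because $\alpha_{n+1}(x)$ minimizes $g(x,\cdot;p_n)$ with $p_n=\nabla_hV_n^h(x)$, we have $g(x,\alpha_{n+1};p_n)\le g(x,\alpha^*;p_n)$. Hence
\[
g(x,\alpha_{n+1};p^*)-g(x,\alpha^*;p^*)\le (f(x,\alpha_{n+1})-f(x,\alpha^*))\cdot(p^*-p_n).
\]
This is the standard ``policy-improvement gap is bilinear'' trick. Using $|f(x,a)-f(x,b)|\le L_{f,a}|a-b|$ and $|\alpha_{n+1}(x)-\alpha^*(x)|\le L_\alpha|p_n-p^*|$ gives a bound $L_{f,a}L_\alpha|p_n-p^*|^2$. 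Since $|p_n-p^*|^2=\sum_i\bigl((e_n(x+he_i)-e_n(x-he_i))/2h\bigr)^2\le d\|e_n\|_\infty^2/h^2$, we obtain
\[
\kappa\, e_{n+1}\le\kappa\beta_h\|e_{n+1}\|_\infty+\frac{dL_{f,a}L_\alpha}{h^2}\|e_n\|_\infty^2 .
\]

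Finally, we absorb the $e_{n+1}$ term. Since $\kappa(1-\beta_h)=\lambda$, taking the supremum over $x$ (with $e_{n+1}\ge0$) gives $\lambda\|e_{n+1}\|_\infty\le \frac{dL_{f,a}L_\alpha}{h^2}\|e_n\|_\infty^2$, which is \eqref{eq:local_quadratic}. A cleaner alternative to absorbing a supremum is the comparison route. From the same computation, $\mathcal L^h_{\alpha_{n+1}}(V^h+M)\ge \lambda M-\frac{dL_{f,a}L_\alpha}{h^2}\|e_n\|_\infty^2\ge0$ for $M=\frac{dL_{f,a}L_\alpha}{\lambda h^2}\|e_n\|_\infty^2$. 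Here we use $\mathcal L^h_{\alpha^*}V^h=0$, constants shift $\mathcal L$ by $\lambda M$, and $\mathcal L^h_{\alpha_{n+1}}V^h-\mathcal L^h_{\alpha^*}V^h=-(g(\alpha_{n+1};p^*)-g(\alpha^*;p^*))$. Corollary~\ref{cor:frozen_comparison} then gives $V_{n+1}^h\le V^h+M$, and I would present this version.

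The main obstacle is the sign bookkeeping in the cancellation step: one must check that the inequality from optimality of $\alpha_{n+1}$ for $p_n$ goes the right way to bound the gap by the cross term $(f(\alpha_{n+1})-f(\alpha^*))\cdot(p^*-p_n)$ rather than its negative. The optimality of $\alpha^*$ for $p^*$ is not needed for the upper bound. The $h^{-2}$ arises because the centered gradient difference is bounded only by $\|e_n\|_\infty/h$ per component, with no discrete Lipschitz control available at fixed $h$; hence the constant is non-uniform as $h\downarrow0$. The concluding sentence about a local quadratic regime follows from Theorem~\ref{thm:fixed_h_supnorm}: once $\|e_n\|_\infty<\lambda h^2/(dL_{f,a}L_\alpha)$, the recursion contracts quadratically.
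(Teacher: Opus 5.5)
Your proposal is correct and matches the paper's proof. It uses the same bilinear bound on the improvement gap, $g(x,\alpha_{n+1}(x);p^*)-g(x,\alpha^*(x);p^*)\le (f(x,\alpha_{n+1}(x))-f(x,\alpha^*(x)))\cdot(p^*-p_n)\le L_{f,a}L_\alpha|p_n-p^*|^2$, the same estimate $|\nabla_h u|\le\sqrt d\,\|u\|_\infty/h$, and the same comparison with constant barriers for the $\alpha_{n+1}$-frozen operator. The paper phrases that last step as the error equation $\lambda e-f(x,\alpha_{n+1}(x))\cdot\nabla_h e-Nh\Delta_h e=G_n$ compared against $\pm\|G_n\|_\infty/\lambda$, which is equivalent to your supersolution $V^h+M$; your absorption variant via $\kappa(1-\beta_h)=\lambda$ is an equally valid form of that maximum-principle step.
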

\begin{proof}
At each $x$, set $p_n=\nabla_hV_n^h(x)$, $p_*=\nabla_hV^h(x)$,
$a'=\alpha(x,p_n)$, $a_*=\alpha(x,p_*)$, and
$Q_x(p,a)=c(x,a)+p\cdot f(x,a)$. Optimality of $a_*$ at $p_*$ and of $a'$ at
$p_n$ gives
\[
\begin{aligned}
0\le G_n(x)&:=Q_x(p_*,a')-Q_x(p_*,a_*)\\
&\le(p_*-p_n)\cdot\bigl(f(x,a')-f(x,a_*)\bigr)\\
&\le L_{f,a}L_\alpha|p_n-p_*|^2.
\end{aligned}
\]
Subtracting the Bellman equation for $V^h$ from the evaluation equation for
$V_{n+1}^h$ yields, with $e=V_{n+1}^h-V^h$,
\[
\lambda e-f(x,a')\cdot\nabla_he-Nh\Delta_he=G_n.
\]
Comparison with the constant barriers $\pm\|G_n\|_\infty/\lambda$ gives
$\|e\|_\infty\le\|G_n\|_\infty/\lambda$. This uses the same monotone
resolvent argument as Corollary~\ref{cor:frozen_comparison}, with source $G_n$.
Finally, $|\nabla_hu(x)|\le\sqrt d\,\|u\|_\infty/h$ gives
\eqref{eq:local_quadratic}. The geometric convergence already proved ensures
entry into the local regime.
\end{proof}
}

For the semi-discrete stationary problem, the convergence of policy iteration
is driven by the discounted resolvent structure.
More precisely, once the policy-evaluation equation is rewritten as a fixed-point problem
for the map $T_\alpha$, the contraction factor is
\[
\beta_h=\frac{2dN/h}{\lambda+2dN/h}\in(0,1).
\]
Thus the damping arises from the competition between the zeroth-order discount term
$\lambda$ and the total stencil weight $2dN/h$. In particular, the contraction weakens as $\lambda\downarrow0$, which is consistent with the greater difficulty of the undiscounted stationary problem.

\begin{remark}[Stationary discounted vs.\ finite-horizon PI]
The semi-discrete regularization used here is analogous in spirit to that of
\cite{TangTranZhang2025}: in both cases, monotone artificial viscosity is introduced
to restore comparison and to make the policy-improvement step well defined through
discrete gradients. The estimates used to control the iteration differ.
For finite-horizon parabolic HJB equations, the analysis is based on time evolution
and Gr\"onwall-type propagation.
For the stationary discounted equation, there is no time variable,
and the fixed-$h$ convergence is instead a consequence of the resolvent contraction
induced by the discount term $\lambda$.
Accordingly, the relevant constants deteriorate as $\lambda\downarrow0$,
rather than with the time horizon.
\end{remark}

The geometric convergence of the value iterates yields convergence of the associated policies. Indeed, since the policy map depends on the discrete gradient of the value function, the Lipschitz continuity of $\alpha(x,p)$ allows us to transfer the value convergence to the policy sequence.

\begin{corollary}\label{cor:policy_conv}
Under the same assumptions in Theorem~\ref{thm:fixed_h_supnorm}, let $\{V_n^h\}_{n\ge0}$ be generated by the semi-discrete PI
\eqref{eq:PI_eval_h}--\eqref{eq:PI_improve_h}, and let $V^h$ be the unique bounded
solution to \eqref{eq:HJBh_new}. Define
\[
\alpha_{n+1}(x):=\alpha\bigl(x,\nabla_h V_n^h(x)\bigr),
\qquad
\alpha^h(x):=\alpha\bigl(x,\nabla_h V^h(x)\bigr).
\]
Then
\begin{equation}\label{eq:policy_Linf_grad}
\|\alpha_{n+1}-\alpha^h\|_{L^\infty(\Rd)}
\le
L_\alpha\,\|\nabla_h(V_n^h-V^h)\|_{L^\infty(\Rd)},
\end{equation}
and
\begin{equation}\label{eq:policy_shift_bound}
\|\alpha_{n+1}-\alpha^h\|_{L^\infty(\Rd)}
\le
\frac{\sqrt d\,L_\alpha}{h}\|V_n^h-V^h\|_{L^\infty(\Rd)}.
\end{equation}
\end{corollary}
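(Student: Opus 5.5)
The plan is to handle both estimates pointwise in $x$ and then take suprema. For \eqref{eq:policy_Linf_grad}, fix $x\in\Rd$. By the definitions, $\alpha_{n+1}(x)=\alpha(x,\nabla_hV_n^h(x))$ and $\alpha^h(x)=\alpha(x,\nabla_hV^h(x))$. Both evaluations use the same spatial argument $x$, so only the $p$-variable varies.

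Assumption~\ref{ass:policy} makes $\alpha$ globally Lipschitz in $(x,p)$ with constant $L_\alpha$, so it is in particular Lipschitz in $p$ for fixed $x$. Hence
\[
|\alpha_{n+1}(x)-\alpha^h(x)|\le L_\alpha\,|\nabla_hV_n^h(x)-\nabla_hV^h(x)|.
\]
Since $\nabla_h$ is linear, $\nabla_hV_n^h-\nabla_hV^h=\nabla_h(V_n^h-V^h)$. Taking the supremum over $x$ then gives \eqref{eq:policy_Linf_grad}. Here $\|\cdot\|_{L^\infty}$ of the vector field is understood as the supremum of its Euclidean norm, matching the norm used in the Lipschitz bound. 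All quantities are bounded by Proposition~\ref{prop:wellposed} and Corollary~\ref{cor:banach}, so the suprema are finite.

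For \eqref{eq:policy_shift_bound}, we only need the elementary bound $|\nabla_hu(x)|\le\sqrt d\,\|u\|_\infty/h$ for bounded $u$; the same bound was already used in Proposition~\ref{prop:local_quadratic}. To prove it, note that each component satisfies
\[
\frac{|u(x+he_i)-u(x-he_i)|}{2h}\le\frac{2\|u\|_\infty}{2h}=\frac{\|u\|_\infty}{h}.
\]
Summing the squares of the $d$ components and taking the square root yields the factor $\sqrt d$. Applying this with $u=V_n^h-V^h$ and substituting into \eqref{eq:policy_Linf_grad} gives \eqref{eq:policy_shift_bound}.

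There is no real obstacle. The only point requiring care is consistency of norms: the Euclidean norm on $\Rd$ must be the one for which $L_\alpha$ is defined, since that is what produces the $\sqrt d$ factor. Combined with Theorem~\ref{thm:fixed_h_supnorm}, this shows that the policies converge geometrically at rate $\beta_h^n$, with prefactor $\sqrt d L_\alpha\cdot 2\|c\|_\infty/(\lambda h)$.
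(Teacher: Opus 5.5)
Your proposal is correct and follows essentially the same argument as the paper: a pointwise Lipschitz estimate in the $p$-variable at fixed $x$ followed by a supremum, then the componentwise bound $|\nabla_h u(x)|\le \sqrt d\,\|u\|_\infty/h$ applied to $u=V_n^h-V^h$. Your closing remark is a correct extra consequence not included in the paper's proof: combining with Theorem~\ref{thm:fixed_h_supnorm} gives geometric convergence of the policies.
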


\begin{proof}
Recall from Assumption~\ref{ass:policy} that the policy map $\alpha$ is Lipschitz continuous with constant $L_\alpha$. Thus,
\[
|\alpha_{n+1}(x)-\alpha^h(x)|
=
\bigl|\alpha(x,\nabla_h V_n^h(x))-\alpha(x,\nabla_h V^h(x))\bigr|
\le
L_\alpha\,|\nabla_h(V_n^h-V^h)(x)|.
\]
Taking the supremum over $x\in\Rd$ yields \eqref{eq:policy_Linf_grad}.

Next, for any bounded $u$ and any $x\in\Rd$,
\[
|\nabla_h u(x)|
=
\frac{1}{2h}\left(\sum_{i=1}^d|u(x+he_i)-u(x-he_i)|^2\right)^{1/2}
\le
\frac{\sqrt d}{h}\|u\|_\infty.
\]
Applying this with $u=V_n^h-V^h$ and combining with \eqref{eq:policy_Linf_grad}
gives \eqref{eq:policy_shift_bound}.
\end{proof}

% ============================================================
\subsection{Discretization error: convergence of \texorpdfstring{$V^h\to V$}{Vh to V} as \texorpdfstring{$h\to 0$}{h to 0}}\label{sec:disc_error}

Let $V$ be the unique bounded viscosity solution of \eqref{eq:HJB}, and $V^h$ solve \eqref{eq:HJBh_new}. Under a discount condition ensuring that $V$ is Lipschitz, we obtain the canonical $\sqrt h$ approximation rate. We first recall a classical regularity result~\cite{tran2021hamilton,CrandallIshiiLions1992,BardiCapuzzo1997}.

\begin{lemma}
\label{lem:V_lipschitz}
Assume Assumption~\ref{ass:A} and suppose that
\[
\lambda > \Lip_x(f).
\]
Then the viscosity solution $V$ of
\[
\lambda V + H(x,\nabla V)=0
\]
is globally Lipschitz continuous, with
\[
\Lip(V)\le\frac{\Lip_x(c)}{\lambda-\Lip_x(f)}.
\]
\end{lemma}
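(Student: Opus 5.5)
We prove the Lipschitz bound through the control representation \eqref{eq:Vstar}, using that $V$ is the unique bounded viscosity solution of \eqref{eq:HJB}. The alternative, a doubling-of-variables comparison for the PDE, works as well; the trajectory argument is shorter and gives the sharp constant directly.

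Fix $x,y\in\Rd$ and an arbitrary measurable control $a(\cdot)$. Let $x(\cdot)$ and $y(\cdot)$ be the solutions of \eqref{eq:ode} started at $x$ and $y$ under this same control. These exist globally and are unique, because $f$ is bounded and Lipschitz in $x$ by \ref{A1}. Set $L_f:=\Lip_x(f)$. Then
\[
\frac{d}{dt}|x(t)-y(t)|\le |f(x(t),a(t))-f(y(t),a(t))|\le L_f|x(t)-y(t)|
\]
in the integrated sense. Gr\"onwall's lemma gives $|x(t)-y(t)|\le e^{L_f t}|x-y|$ for all $t\ge0$.

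Next we compare the costs of the two trajectories:
\[
|J(x;a)-J(y;a)|\le\int_0^\infty e^{-\lambda t}\Lip_x(c)\,e^{L_f t}|x-y|\,dt=\frac{\Lip_x(c)}{\lambda-L_f}|x-y|.
\]
The integral is finite precisely because $\lambda>L_f$, so this is the only place where the hypothesis enters. For any $\varepsilon>0$ we choose an $\varepsilon$-optimal control $a_\varepsilon$ for $y$, meaning $J(y;a_\varepsilon)\le V(y)+\varepsilon$. Then
\[
V(x)-V(y)\le J(x;a_\varepsilon)-J(y;a_\varepsilon)+\varepsilon\le \frac{\Lip_x(c)}{\lambda-L_f}|x-y|+\varepsilon.
\]
Exchanging the roles of $x$ and $y$ and letting $\varepsilon\downarrow0$ gives the claimed bound.

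The main obstacle is justifying that the function defined by \eqref{eq:Vstar} coincides with the viscosity solution in the statement. This is the classical dynamic programming principle together with uniqueness of bounded viscosity solutions. We invoke it as recalled after Assumption~\ref{ass:A}, where $V$ is identified with \eqref{eq:Vstar}, and note that $|V|\le\|c\|_\infty/\lambda$ guarantees boundedness, so the uniqueness class applies. No regularity of $V$ beyond that is needed. Measurability of the controls poses no problem, since the estimate holds control by control.
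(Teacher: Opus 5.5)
Your proposal is correct and follows the paper's proof essentially step for step. Both use Gr\"onwall's bound $|x(t)-y(t)|\le e^{\Lip_x(f)t}|x-y|$ under a common control, integrate the discounted cost difference (finite because $\lambda>\Lip_x(f)$), and pass to the infimum over controls, with your $\varepsilon$-optimal control being an explicit form of the paper's ``taking infima'' and your identification of the viscosity solution with \eqref{eq:Vstar} matching the classical result recalled after Assumption~\ref{ass:A}.
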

{
\begin{proof}
Set $L_f=\Lip_x(f)$ and $L_c=\Lip_x(c)$. For any measurable control
$a(\cdot)$, the trajectories starting at $x,y$ satisfy
$|X_x^a(t)-X_y^a(t)|\le e^{L_ft}|x-y|$ by Gr\"onwall's inequality. Hence
\[
|J(x;a)-J(y;a)|
\le L_c|x-y|\int_0^\infty e^{-(\lambda-L_f)t}\,dt
=\frac{L_c}{\lambda-L_f}|x-y|.
\]
Taking infima over the same class of controls proves the estimate for $V$.
\end{proof}

We also record a uniform Lipschitz estimate for the semi-discrete solutions
under a stronger discount condition. It is a separate regularity result;
Theorem~\ref{thm:h_to_0} below uses only the preceding estimate for $V$.
}

{
\begin{lemma}
\label{lem:Vh_lipschitz}
Assume Assumption~\ref{ass:A}, \eqref{eq:Nchoice_new}, and
\[
\lambda>\sqrt d\,\Lip_x(f).
\]
For each $h\in(0,1)$, let $V^h\in C_b(\Rd)$ denote the unique bounded
{solution} of the semi-discrete equation
\begin{equation}\label{eq:HJBh_lip}
\lambda V^h(x)+H\bigl(x,\nabla_h V^h(x)\bigr)
=
Nh\,\Delta_h V^h(x),
\qquad x\in\Rd.
\end{equation}
Then there exists a constant $L>0$, independent of $h\in(0,1)$, such that
\[
|V^h(x)-V^h(y)|\le L|x-y|
\qquad\text{for all }x,y\in\Rd.
\]
More precisely,
\[
\sup_{h\in(0,1)}\Lip(V^h)
\le
\frac{\Lip_x(c)}
{\lambda-\sqrt d\,\Lip_x(f)}.
\]
\end{lemma}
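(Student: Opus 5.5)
We will prove the uniform Lipschitz bound by a translation argument at the level of the semi-discrete equation, combined with the discrete comparison principle and the contraction structure of Lemma~\ref{lem:contraction}. Fix $z\in\Rd$ and set $W(x):=V^h(x+z)$. Since $\nabla_h$ and $\Delta_h$ commute with translations, $W$ solves
\[
\lambda W(x)+H\bigl(x+z,\nabla_h W(x)\bigr)=Nh\,\Delta_hW(x),\qquad x\in\Rd .
\]
Set $K:=L|z|$ with $L:=\Lip_x(c)/(\lambda-\sqrt d\,\Lip_x(f))$, to be justified. The goal is to show $W\le V^h+K$ and $V^h\le W+K$, which gives $|V^h(x+z)-V^h(x)|\le L|z|$ for every $z$, uniformly in $h$.

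Working directly with comparison is awkward, because the error term $|H(x+z,p)-H(x,p)|\le \Lip_x(c)|z|+\Lip_x(f)|z|\,|p|$ contains $|p|=|\nabla_hV^h(x)|$, which is not a priori bounded independently of $h$. I would therefore set the argument up as a fixed-point bootstrap in the Lipschitz seminorm. Let $\mathcal K_L:=\{U\in C_b(\Rd):\ \Lip(U)\le L\}$, which is closed in $\ell^\infty(\Rd)$, and show that $T$ maps $\mathcal K_L$ into itself. Since $V^h$ is the limit of the Picard iterates $T^kU_0$ (Corollary~\ref{cor:banach}), starting from $U_0=0\in\mathcal K_L$ gives $V^h\in\mathcal K_L$.

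For the invariance step, take $U\in\mathcal K_L$. For fixed $a$, compare $(\mathcal T_aU)(x+z)$ with $(\mathcal T_aU)(x)$. The weights are $w_i^\pm(x,a)=\tfrac Nh\pm\tfrac{f_i(x,a)}{2h}$ and the denominator is $\lambda+2dN/h$. Write the difference as
\[
\frac{[c(x+z,a)-c(x,a)]+\sum_{i,\pm}w_i^\pm(x,a)\bigl[U(x+z\pm he_i)-U(x\pm he_i)\bigr]+\sum_i\frac{f_i(x+z,a)-f_i(x,a)}{2h}\bigl[U(x+z+he_i)-U(x+z-he_i)\bigr]}{\lambda+2dN/h}.
\]
Each of the three pieces is bounded separately:
\begin{itemize}
\item the first by $\Lip_x(c)|z|$;
\item the second by $\tfrac{2dN}{h}L|z|$, using nonnegativity of the weights;
\item the third by $\tfrac1{2h}\sum_i|f_i(x+z,a)-f_i(x,a)|\,2hL$.
\end{itemize}
By Cauchy--Schwarz, $\sum_i|f_i(x+z,a)-f_i(x,a)|\le\sqrt d\,\Lip_x(f)|z|$, so the third piece is at most $\sqrt d\,\Lip_x(f)L|z|$. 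This is exactly where the factor $\sqrt d$ in the hypothesis arises, and the crucial point is that the $1/h$ in the drift weight cancels against the $h$ from $|U(x+z+he_i)-U(x+z-he_i)|\le 2hL$.

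Collecting the pieces,
\[
|(\mathcal T_aU)(x+z)-(\mathcal T_aU)(x)|\le\frac{\Lip_x(c)+\sqrt d\,\Lip_x(f)L+\tfrac{2dN}{h}L}{\lambda+2dN/h}\,|z|.
\]
This is at most $L|z|$ iff $\Lip_x(c)+\sqrt d\,\Lip_x(f)L\le\lambda L$, i.e.\ iff $L\ge\Lip_x(c)/(\lambda-\sqrt d\,\Lip_x(f))$, which holds with equality for our choice of $L$. Taking the infimum over $a\in A$ preserves the bound, since $|\inf_a p_a-\inf_a q_a|\le\sup_a|p_a-q_a|$. Hence $T(\mathcal K_L)\subset\mathcal K_L$, and the conclusion follows by closedness and Picard convergence, with a constant independent of $h$.

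The main obstacle is the drift-difference term. A naive comparison argument on $W-V^h$ would require an $h$-uniform bound on $\nabla_hV^h$, which is essentially the statement being proved. The Lipschitz-invariance formulation sidesteps this circularity because the bound $|U(y+he_i)-U(y-he_i)|\le 2hL$ is available for each iterate by the inductive hypothesis. The remaining care is to group the terms so that the weights $w_i^\pm(x,a)$, rather than $w_i^\pm(x+z,a)$, multiply the translated differences, so that nonnegativity from \eqref{eq:Nchoice_new} can be used.
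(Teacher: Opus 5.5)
Your proof is correct. It gives the same constant as the paper, by a genuinely different route.

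The paper also translates, setting $V_z^h(x)=V^h(x+z)$, but it then works with the solution itself. The translate $V_z^h$ solves the semi-discrete equation up to the residual $R_z^h(x)=H(x,\nabla_hV_z^h(x))-H(x+z,\nabla_hV_z^h(x))$. Comparison with $V^h+\|R_z^h\|_\infty/\lambda$ then bounds $|V^h(x+z)-V^h(x)|$ by $\|R_z^h\|_\infty/\lambda$.

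The paper handles the circularity you identify with a two-step bootstrap rather than avoiding it. The crude estimate $|\nabla_hV^h|\le\sqrt d\,\|V^h\|_\infty/h$ first gives a finite, $h$-dependent constant $K_h=\Lip(V^h)$. Reinserting $|\nabla_hV^h|\le\sqrt d\,K_h$ then gives $K_h\le(\Lip_x(c)+\sqrt d\,\Lip_x(f)K_h)/\lambda$. This can be absorbed because $K_h<\infty$ and $\lambda>\sqrt d\,\Lip_x(f)$. So the comparison route needs only finiteness of $\Lip(V^h)$, not an $h$-uniform gradient bound. Your remark that a comparison argument would presuppose the result is therefore somewhat too strong.

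Your invariance argument for $T$ on the closed set $\mathcal K_L$ replaces both comparison and absorption by the nonnegativity of the weights $w_i^\pm(x,a)$ and Picard convergence. It needs no a priori regularity of $V^h$. It also shows, slightly more, that every Picard iterate of $T$ started from an $L$-Lipschitz function stays $L$-Lipschitz.

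The factor $\sqrt d$ enters the two proofs dually. In the paper it comes from $|f(x,a)-f(y,a)|\,|\nabla_hV^h|$ together with $|\nabla_hV^h|\le\sqrt d\,K_h$. In yours it comes from $\sum_i|f_i(x+z,a)-f_i(x,a)|\le\sqrt d\,|f(x+z,a)-f(x,a)|$.

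The paper's version, for its part, is the more PDE-style argument. It applies directly to the bounded solution and uses only a comparison principle with a constant source term. Yours relies on the explicit resolvent form of $T$.
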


\begin{proof}
Set
\[
L_c:=\Lip_x(c),
\qquad
L_f:=\Lip_x(f).
\]
Fix $h\in(0,1)$ and $z\in\Rd$, and define
\[
V_z^h(x):=V^h(x+z).
\]
By translation invariance of $\nabla_h$ and $\Delta_h$, the function
$V_z^h$ satisfies
\[
\lambda V_z^h(x)
+
H\bigl(x,\nabla_h V_z^h(x)\bigr)
-
Nh\,\Delta_h V_z^h(x)
=
R_z^h(x),
\]
where
\[
R_z^h(x)
:=
H\bigl(x,\nabla_h V_z^h(x)\bigr)
-
H\bigl(x+z,\nabla_h V_z^h(x)\bigr).
\]
For every $p\in\Rd$, the definition of $H$ gives
\begin{equation}\label{eq:H_x_growth}
|H(x,p)-H(y,p)|
\le
\bigl(L_c+L_f|p|\bigr)|x-y|.
\end{equation}

We first note that $V^h$ is globally Lipschitz for each fixed $h$,
with a constant that may initially depend on $h$.
{Indeed, the constants $\pm M$ with
$M:=\|c\|_{L^\infty(\Rd\times A)}/\lambda$ satisfy $F_h[M]\ge0$ and
$F_h[-M]\le0$, so they are respectively a super- and a subsolution of
\eqref{eq:HJBh_lip}, and Proposition~\ref{prop:comparison} gives}
\[
\|V^h\|_{L^\infty(\Rd)}
\le
\frac{\|c\|_{L^\infty(\Rd\times A)}}{\lambda},
\]
and hence
\[
\|\nabla_h V^h\|_{L^\infty(\Rd)}
\le
\frac{\sqrt d}{h}\|V^h\|_{L^\infty(\Rd)}.
\]
It follows from \eqref{eq:H_x_growth} that
\[
\|R_z^h\|_{L^\infty(\Rd)}
\le C_h|z|
\]
for some finite constant $C_h$.
The comparison principle applied to $V_z^h$ and
$V^h+C_h|z|/\lambda$ therefore yields
\[
|V^h(x+z)-V^h(x)|
\le
\frac{C_h}{\lambda}|z|.
\]
Thus
\[
K_h:=\Lip(V^h)<\infty.
\]

Since $V^h$ is $K_h$-Lipschitz, each component of its centered discrete
gradient satisfies
\[
\left|
\frac{V^h(x+he_i)-V^h(x-he_i)}{2h}
\right|
\le K_h,
\]
and consequently
\[
|\nabla_h V^h(x)|\le \sqrt d\,K_h.
\]
Using \eqref{eq:H_x_growth} once more, we obtain the improved estimate
\[
\|R_z^h\|_{L^\infty(\Rd)}
\le
\bigl(L_c+\sqrt d\,L_fK_h\bigr)|z|.
\]
Therefore, comparison gives
\[
|V^h(x+z)-V^h(x)|
\le
\frac{L_c+\sqrt d\,L_fK_h}{\lambda}|z|.
\]
Taking the supremum over $x$ and $z\ne0$, we conclude that
\[
K_h
\le
\frac{L_c+\sqrt d\,L_fK_h}{\lambda}.
\]
Since $\lambda>\sqrt d\,L_f$, it follows that
\[
K_h
\le
\frac{L_c}{\lambda-\sqrt d\,L_f},
\]
uniformly in $h\in(0,1)$.
\end{proof}
}

\begin{theorem}\label{thm:h_to_0}
Assume Assumption~\ref{ass:A}, \eqref{eq:Nchoice_new}, and
\[
\lambda>\Lip_x(f),
\]
with $N$ fixed independently of $h$.
Let $V$ be the unique bounded viscosity solution of
\begin{equation}\label{eq:HJB_cont_thm}
\lambda V(x)+H(x,\nabla V(x))=0
\qquad \text{in }\Rd,
\end{equation}
and, for each $h\in(0,1)$, let $V^h$ be the unique bounded {solution} of
\begin{equation}\label{eq:HJBh_cont_thm}
\lambda V^h(x)+H\bigl(x,\nabla_h V^h(x)\bigr)=Nh\,\Delta_h V^h(x)
\qquad \text{in }\Rd.
\end{equation}
Then there exists a constant $C>0$, independent of $h\in(0,1)$, such that
\[
\|V^h-V\|_{L^\infty(\Rd)}\le C\sqrt{h}.
\]
\end{theorem}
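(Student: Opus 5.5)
The plan is to use the Crandall--Lions doubling-of-variables argument, in which only the continuous solution $V$ needs a Lipschitz bound. That bound comes from Lemma~\ref{lem:V_lipschitz}, with constant $L:=\Lip_x(c)/(\lambda-\Lip_x(f))$. This is why the theorem only needs $\lambda>\Lip_x(f)$ and not the stronger condition of Lemma~\ref{lem:Vh_lipschitz}. On the semi-discrete side we use only three facts: $V^h\in C_b(\Rd)$ with $\|V^h\|_\infty\le\|c\|_\infty/\lambda$ (Corollary~\ref{cor:banach} together with the barrier argument), the pointwise identity $F_h[V^h]=0$, and the monotonicity of Lemma~\ref{lem:mono_operator}. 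We treat the two one-sided bounds $\sup(V^h-V)\le C\sqrt h$ and $\sup(V-V^h)\le C\sqrt h$ symmetrically, and describe the first.

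For $\varepsilon,\delta>0$ set
\[
\Phi(x,y):=V^h(x)-V(y)-\frac{|x-y|^2}{2\varepsilon}-\delta\bigl(\varphi(x)+\varphi(y)\bigr),\qquad \varphi(x)=\sqrt{1+|x|^2}.
\]
By continuity, boundedness and the coercive penalty, $\Phi$ attains its maximum at some point $(x_0,y_0)$.

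First we localize. Comparing $\Phi(x_0,y_0)\ge\Phi(x_0,x_0)$ and using that $V$ is $L$-Lipschitz gives $|x_0-y_0|\le C\varepsilon$, where $C$ depends on $L$ and on $\delta$ only through a vanishing term $\delta|\nabla\varphi|$. Hence $p:=(x_0-y_0)/\varepsilon$ is bounded independently of $\varepsilon$ and $h$.

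Second, we use the equation for $V$ at $y_0$. The function $y\mapsto V(y)+\frac{|x_0-y|^2}{2\varepsilon}+\delta\varphi(y)$ is minimized at $y_0$. The viscosity supersolution property of $V$ therefore gives
\[
\lambda V(y_0)+H\bigl(y_0,p-\delta\nabla\varphi(y_0)\bigr)\ge0 .
\]

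Third, we use the equation for $V^h$ at $x_0$. Set
\[
\psi(x):=V(y_0)+\frac{|x-y_0|^2}{2\varepsilon}+\delta\varphi(x)+\delta\varphi(y_0)+\Phi(x_0,y_0).
\]
Then $\psi\ge V^h$ everywhere, with equality at $x_0$. By Lemma~\ref{lem:mono_operator}, arguing exactly as in the proof of Proposition~\ref{prop:comparison}, we get $F_h[\psi](x_0)\le F_h[V^h](x_0)=0$. For the quadratic part the centered gradient is exact, so $\nabla_h\psi(x_0)=p+O(\delta)$. The viscosity term gives $Nh\Delta_h\frac{|\cdot-y_0|^2}{2\varepsilon}=\frac{Ndh}{\varepsilon}$. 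Also $\nabla_h\varphi$ and $\Delta_h\varphi$ are bounded uniformly in $h$. Therefore
\[
\lambda V^h(x_0)+H(x_0,p)\le \frac{Ndh}{\varepsilon}+C\delta .
\]

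Now subtract the two inequalities. By \eqref{eq:H_x_growth} and the bound $|H(y,p)-H(y,q)|\le\|f\|_\infty|p-q|$,
\[
\lambda\bigl(V^h(x_0)-V(y_0)\bigr)\le \frac{Ndh}{\varepsilon}+(L_c+L_f|p|)\,|x_0-y_0|+C\delta\le \frac{Ndh}{\varepsilon}+C\varepsilon+C\delta .
\]
Next, for every $x$ we have $V^h(x)-V(x)-2\delta\varphi(x)=\Phi(x,x)\le\Phi(x_0,y_0)\le V^h(x_0)-V(y_0)$. Letting $\delta\downarrow0$ at fixed $x$ gives $\sup(V^h-V)\le \lambda^{-1}\bigl(C\varepsilon+Ndh/\varepsilon\bigr)$. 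Choosing $\varepsilon=\sqrt h$ yields the bound $C\sqrt h$.

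The reverse inequality is proved the same way with the roles swapped. We maximize $V(y)-V^h(x)-|x-y|^2/(2\varepsilon)-\delta(\varphi(x)+\varphi(y))$. The viscosity subsolution property of $V$ is used at $y_0$. The discrete supersolution inequality for $V^h$ is obtained from monotonicity, now touching $V^h$ from below. The sign of the viscosity term $-Ndh/\varepsilon$ is again harmless.

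The main obstacle I expect is the localization step. It must not use any $h$-uniform regularity of $V^h$: Lemma~\ref{lem:Vh_lipschitz} needs $\lambda>\sqrt d\,\Lip_x(f)$, which is not assumed here. The remedy is to place the localization on the $V$ variable, as above, which only requires Lipschitz continuity of $V$. I will also check that the constants in the $\delta$-terms do not depend on $h$. This holds because $\varphi$ has bounded first and second derivatives and $h<1$.
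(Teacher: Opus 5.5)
Your proposal is correct and follows essentially the same route as the paper: doubling of variables with the localization placed only on the $V$ variable via Lemma~\ref{lem:V_lipschitz}, quadratic test functions whose centered differences are exact, stencil monotonicity to get the pointwise inequalities for $V^h$, and then $\varepsilon=\sqrt h$ followed by $\delta\downarrow0$. The only cosmetic difference is that your first direction, bounding $V^h-V$, is the one the paper treats second as its ``reverse estimate''.
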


{
\begin{proof}
Write $U=V^h$, $L_c=\Lip_x(c)$, $L_f=\Lip_x(f)$, and $F=\|f\|_\infty$.
Lemma~\ref{lem:V_lipschitz} gives
$L:=L_c/(\lambda-L_f)$ as a Lipschitz bound for $V$.
The function $U$ is bounded and continuous by Corollary~\ref{cor:banach}.
Set $\nu=Nh$ and $\varphi(z)=\sqrt{1+|z|^2}$. The bounds
\[
|\nabla\varphi|\le1,\qquad |\nabla_h\varphi|\le\sqrt d,
\qquad |\Delta_h\varphi|\le d
\]
hold uniformly for $h\in(0,1)$. The Hamiltonian satisfies
\begin{equation}\label{eq:H_mixed_lipschitz}
|H(x,p)-H(y,q)|\le F|p-q|+(L_c+L_f|q|)|x-y|.
\end{equation}

Fix $\varepsilon>0$ and $\eta\in(0,1)$. To estimate $V-U$, let
$(\bar x,\bar y)$ maximize
\[
\Phi(x,y)=V(x)-U(y)-\frac{|x-y|^2}{2\varepsilon}
-\eta\bigl(\varphi(x)+\varphi(y)\bigr).
\]
The maximum exists by continuity, boundedness of $V,U$, and coercivity of
the localization term. Comparing with $(\bar y,\bar y)$ gives
\[
\frac{|\bar x-\bar y|^2}{2\varepsilon}
\le V(\bar x)-V(\bar y)
+\eta\bigl(\varphi(\bar y)-\varphi(\bar x)\bigr)
\le(L+\eta)|\bar x-\bar y|.
\]
Consequently, $p=(\bar x-\bar y)/\varepsilon$ satisfies $|p|\le2(L+\eta)$.
After adjusting additive constants to achieve contact, the function
$x\mapsto |x-\bar y|^2/(2\varepsilon)+\eta\varphi(x)$ touches $V$
globally from above, and
$y\mapsto-|\bar x-y|^2/(2\varepsilon)-\eta\varphi(y)$ touches $U$
globally from below. The viscosity subsolution inequality for $V$ and
stencil monotonicity for $U$ therefore give
\begin{align*}
\lambda V(\bar x)+H(\bar x,p+\eta\nabla\varphi(\bar x))&\le0,\\
\lambda U(\bar y)+H(\bar y,p-\eta\nabla_h\varphi(\bar y))
+\frac{d\nu}{\varepsilon}+\eta\nu\Delta_h\varphi(\bar y)&\ge0.
\end{align*}
Here the centered differences of the quadratic test functions are exact.
Subtracting yields
\begin{align}\label{eq:disc_error_forward}
\lambda[V(\bar x)-U(\bar y)]
&\le H(\bar y,p-\eta\nabla_h\varphi(\bar y))
-H(\bar x,p+\eta\nabla\varphi(\bar x))\nonumber\\
&\quad+\frac{d\nu}{\varepsilon}+\eta\nu\Delta_h\varphi(\bar y).
\end{align}

For the reverse estimate, let $(\hat x,\hat y)$ maximize
\[
\widetilde\Phi(x,y)=U(x)-V(y)-\frac{|x-y|^2}{2\varepsilon}
-\eta\bigl(\varphi(x)+\varphi(y)\bigr).
\]
Comparing with $(\hat x,\hat x)$ gives
\[
\frac{|\hat x-\hat y|^2}{2\varepsilon}
\le V(\hat x)-V(\hat y)
+\eta\bigl(\varphi(\hat x)-\varphi(\hat y)\bigr)
\le(L+\eta)|\hat x-\hat y|.
\]
Thus $p=(\hat x-\hat y)/\varepsilon$ again obeys $|p|\le2(L+\eta)$,
using only the Lipschitz bound for $V$. The quadratic tests now touch $U$
globally from above and $V$ globally from below. Stencil monotonicity and the
viscosity supersolution inequality give
\begin{align*}
\lambda U(\hat x)+H(\hat x,p+\eta\nabla_h\varphi(\hat x))
-\frac{d\nu}{\varepsilon}-\eta\nu\Delta_h\varphi(\hat x)&\le0,\\
\lambda V(\hat y)+H(\hat y,p-\eta\nabla\varphi(\hat y))&\ge0.
\end{align*}
Hence
\begin{align}\label{eq:disc_error_reverse}
\lambda[U(\hat x)-V(\hat y)]
&\le H(\hat y,p-\eta\nabla\varphi(\hat y))
-H(\hat x,p+\eta\nabla_h\varphi(\hat x))\nonumber\\
&\quad+\frac{d\nu}{\varepsilon}+\eta\nu\Delta_h\varphi(\hat x).
\end{align}

In both directions the momentum arguments are bounded by
$P_0=2(L+1)+\sqrt d$. Applying \eqref{eq:H_mixed_lipschitz} to
\eqref{eq:disc_error_forward} and \eqref{eq:disc_error_reverse}, we may take
\[
C_0=2(L+1)(L_c+L_fP_0),\qquad C_\eta=F(1+\sqrt d)+dN
\]
to bound either right-hand side by
$C_0\varepsilon+dNh/\varepsilon+C_\eta\eta$.
Define the localized diagonal errors
\[
M_\eta^+=\sup_x\{V(x)-U(x)-2\eta\varphi(x)\},\qquad
M_\eta^-=\sup_x\{U(x)-V(x)-2\eta\varphi(x)\}.
\]
Each is at most the corresponding doubled maximum, which in turn is at most
the value difference at its maximizing pair. Therefore
\[
\lambda M_\eta^\pm
\le C_0\varepsilon+\frac{dNh}{\varepsilon}+C_\eta\eta.
\]
Choose $\varepsilon=\sqrt h$ and let $\eta\downarrow0$.
For either sign, $M_\eta^\pm$ tends to the unpenalized supremum: the upper
bound is immediate, and the lower bound follows by evaluating at any fixed
point within an arbitrary positive tolerance of that supremum. We obtain
\[
\|V^h-V\|_\infty\le\frac{C_0+dN}{\lambda}\sqrt h,
\]
with a constant independent of $h$. No uniform Lipschitz estimate for $V^h$
is required.
\end{proof}

\begin{remark}[Sharpness of the discretization exponent]\label{rem:sqrt_h_sharpness}
Take $d=1$, $A=\{0\}$, $f=0$, $c(x,0)=\min\{|x|,1\}$, and
$\lambda=N=1$. These data satisfy our assumptions and $V(x)=c(x,0)$.
For $h=1/M$, $M\ge2$ an integer, the restriction $U_j=V^h(jh)$ satisfies
\[
(h+2)U_j-U_{j+1}-U_{j-1}=h\min\{|j|h,1\}.
\]
Set $\rho_h=(2+h-\sqrt{h^2+4h})/2$ and $D_h=\sqrt{1+4/h}$.
The summable kernel $G_j=\rho_h^{|j|}/D_h$ satisfies
$(h+2)G_j-G_{j+1}-G_{j-1}=h\delta_{j0}$.
Here $\delta_{j0}$ is the Kronecker delta. Convolution with the bounded cost
sequence therefore gives the unique bounded
solution. Since $(1-\rho_h)^2=h\rho_h$,
\[
V^h(0)=\frac{2}{D_h}\sum_{j\ge1}\rho_h^j\min\{jh,1\}
=\frac{2(1-\rho_h^M)}{D_h}.
\]
As $h=1/M\downarrow0$, $\rho_h^M\to0$ and $D_h\sqrt h\to2$. Thus
\[
\frac{|V^h(0)-V(0)|}{\sqrt h}\longrightarrow1.
\]
The $\sqrt h$ order cannot be improved to $o(\sqrt h)$ for the full bounded
Lipschitz data class, even with a singleton control set.
\end{remark}
}

\subsection{Total error decomposition and the \texorpdfstring{$nh$}{nh}-coupling}\label{sec:total_error}
In this section, we derive a unified error estimate that reveals the interaction
between the policy iteration error and the spatial discretization error.
The resolvent contraction bound deteriorates as the mesh is refined, increasing the number of iterations that this bound guarantees to be sufficient.

\paragraph{Unified error bound.}
Assume the hypotheses of both Theorems~\ref{thm:fixed_h_supnorm} and
\ref{thm:h_to_0}, including $\lambda>\Lip_x(f)$, with $N$ fixed
independently of $h$. Combining their estimates by the triangle inequality gives
\begin{equation}\label{eq:total_error_sharp}
\norm{V_n^h - V}_{L^\infty(\Rd)} \le C_1 \beta_h^n + C_2 \sqrt{h},
\end{equation}
where $C_1 = 2\lambda^{-1}\norm{c}_\infty$, $C_2 > 0$ is a constant independent of $h$ and $n$, and $\beta_h = \frac{2dN/h}{\lambda + 2dN/h} < 1$ is the contraction factor. This decomposition separates the iteration error and the discretization error, which are controlled by the resolvent estimate and consistency, respectively.

To better understand the asymptotic behavior as $h \to 0$, we observe that
\[
\beta_h = \left( 1 + \frac{\lambda h}{2dN} \right)^{-1}.
\]
{%
Using
\[
\log(1+a)\;\ge\;\frac{a}{1+a},
\qquad a>0,
\]
with $a=\lambda h/(2dN)$ gives
\[
\beta_h^{\,n}
=
\exp\!\Big[-n\log\Big(1+\frac{\lambda h}{2dN}\Big)\Big]
\le
\exp\!\Big(-\frac{\lambda\,nh}{2dN+\lambda h}\Big),
\]
and hence
\begin{equation}\label{eq:total_error_exp}
\norm{V_n^h - V}_{L^\infty(\Rd)}
\le
C_1 \exp\!\left( - \frac{\lambda\,nh}{2dN+\lambda h} \right) + C_2 \sqrt{h}.
\end{equation}
Since $h\in(0,1)$, one may also use the $h$-independent form
\[
\beta_h^{\,n}\le\exp\!\Big(-\frac{\lambda}{2dN+\lambda}\,nh\Big).
\]
}
The guaranteed iteration-error bound therefore decays as a function of $nh$.
This estimate does not identify the actual rate of policy iteration.

\paragraph{The $nh$-coupling and computational efficiency.}
The dependence on $nh$ in \eqref{eq:total_error_exp} gives the following
sufficient prescriptions for the iteration count:

\begin{itemize}
    \item \textbf{Mesh dependence of the sufficient bound.}
    For a fixed relative iteration tolerance, the resolvent estimate gives a
    sufficient count of order $h^{-1}$ as $h\downarrow0$.
    \item \textbf{A sufficient iteration count.}
    To make the iteration-error term in \eqref{eq:total_error_exp} of order
    $\sqrt h$, it is sufficient to choose
    \[
    n \ge {\frac{2dN+\lambda h}{2\lambda h}} \ln\left(\frac{1}{h}\right).
    \]
    Then $\|V_n^h-V\|_{L^\infty(\Rd)}\le(C_1+C_2)\sqrt h$.
    Thus an iteration count of order $h^{-1}\log(1/h)$ suffices; no matching
    lower bound on the required number of policy iterations is claimed.
\end{itemize}

\begin{remark}
The factor {$\frac{\lambda}{2dN+\lambda h}$} in the exponent quantifies the balance between
the discount parameter $\lambda$ and the artificial diffusion parameter $N$.
Increasing $N$ weakens the guaranteed resolvent contraction bound.
This estimate does not determine how the actual PI iteration count varies
with $N$.
\end{remark}

{
\begin{remark}[sharpness of the $nh$-coupling]\label{rem:nh_sharpness}
Both terms in \eqref{eq:total_error_sharp} are upper bounds, and the estimates
above are therefore sufficient conditions rather than descriptions of the
observed behavior.
The factor $\beta_h$ is the contraction constant of the \emph{affine} resolvent
map $T_\alpha$ associated with a frozen policy, and it does not use the
optimality of the improvement step; it is therefore natural that the observed
convergence should be faster.
The mesh refinement study of Section~\ref{subsec:numerics_1d_hsweep} quantifies
the gap for a fixed relative tolerance: its sufficient resolvent count is
$O(h^{-1})$, while the measured counts are much smaller over the tested range.
The $O(h^{-1}\log(1/h))$ prescription instead balances iteration error with
the $O(\sqrt h)$ discretization error. Neither experiment establishes an
asymptotic growth law for the observed iteration count.
The ratios measured in Sections~\ref{subsec:numerics_1d_fixed_h}
and~\ref{sec:num_2d} stay well below $\beta_h$ and eventually drop sharply
towards zero. Howard-type policy iteration is classically interpreted as a
Newton method applied to the Bellman equation, and superlinear or finite
convergence has been established in several settings, including monotone
discretizations of HJB equations~\cite{PutermanBrumelle1979,SantosRust2004,BokanowskiMarosoZidani2009}.
Proposition~\ref{prop:local_quadratic} gives a direct local quadratic estimate
under our current assumptions, without invoking those results. Its constant
scales as $h^{-2}$, so it does not imply a mesh-independent iteration count.
The $nh$-coupling describes a sufficient complexity bound
from the resolvent argument, not a matching lower bound on policy iteration.
\end{remark}
}

\section{Numerical experiments}\label{sec:exp}

{
The code for all experiments reported below, including scripts to regenerate
the figures and underlying metrics, is available at
\url{https://github.com/yeoneung/stationary_hj}.
Throughout this section, $\|\cdot\|_{L^\infty}$ denotes the maximum over grid
nodes and $\|\cdot\|_{2,h}$ the root-mean-square discrete norm
\begin{equation}\label{eq:rms_norm}
\|u\|_{2,h}
:=
\Bigl(\tfrac{1}{N_{\mathrm{grid}}}\textstyle\sum_j|u_j|^2\Bigr)^{1/2},
\end{equation}
where $N_{\mathrm{grid}}$ is the number of grid nodes.
Section~\ref{subsec:numerics_1d_fixed_h} isolates the fixed-$h$ policy-iteration
mechanism on a problem with an analytic solution,
Section~\ref{subsec:numerics_1d_hsweep} performs the mesh refinement study that
tests the vanishing-viscosity estimate and the $nh$-coupling, and
Section~\ref{sec:num_2d} treats a genuinely nonlinear two-dimensional benchmark.
}

\subsection{One-dimensional discounted quadratic control: fixed-\texorpdfstring{$h$}{h} PI convergence}
\label{subsec:numerics_1d_fixed_h}

We validate the semi-discrete policy iteration (PI) scheme on a one-dimensional
deterministic control problem with an analytic solution.
This experiment isolates the PI mechanism from discretization effects
and demonstrates the decay of the value iterates for fixed mesh size $h$.

\paragraph{Model problem.}
We consider the controlled dynamics
\begin{equation}
\dot x(t) = a(t),
{\qquad a(t)\in A:=[-a_{\max},a_{\max}],}
\end{equation}
with running cost
\begin{equation}
c(x,a) = \frac12 x^2 + \frac12 a^2,
\end{equation}
and discount factor $\lambda>0$.

Under the Hamiltonian convention
\[
H(x,p)=\sup_{a\in A}\{-c(x,a)-p a\},
\]
{%
the unconstrained maximizer is $a=-p$, so that
\begin{equation}\label{eq:1d_H}
H(x,p)
=
\begin{cases}
-\dfrac12x^2+\dfrac12p^2, & |p|\le a_{\max},\\[6pt]
-\dfrac12x^2-\dfrac12a_{\max}^2+a_{\max}|p|, & |p|> a_{\max},
\end{cases}
\end{equation}
the second branch corresponding to saturation at $\partial A$.
The function constructed below satisfies $|V'(x)|=P|x|\le PL<a_{\max}$ on
$[-L,L]$ by \eqref{eq:1d_amax}, so only the first branch of \eqref{eq:1d_H} is
active there and $V$ solves the reduced stationary HJB equation
\begin{equation}\label{eq:1d_hjb_reduced}
\lambda V(x) - \frac12 x^2 + \frac12 |V'(x)|^2 = 0,
\qquad x\in(-L,L).
\end{equation}
All statements below refer to \eqref{eq:1d_hjb_reduced} on the computational
interval, not to an equation on all of $\mathbb R$.
}

{
\begin{remark}[relation to Assumption~\ref{ass:A}]\label{rem:1d_assumptions}
The running cost $c(x,a)=\frac12x^2+\frac12a^2$ is not globally bounded on
$\mathbb R$, so this example does not satisfy \ref{A1} on the whole space.
Accordingly, the problem is solved on the truncated interval $[-L,L]$ with exact
Dirichlet data, and the experiment should be read as a \emph{bounded-domain
benchmark for the discrete policy-iteration mechanism}, not as an instance of the
whole-space theorems.
On $[-L,L]$ the relevant constants are finite, $\Lip_x(c)=L$ and
$\Lip_x(f)=0$, so the discount conditions in Lemma~\ref{lem:Vh_lipschitz} and
Theorem~\ref{thm:h_to_0} both hold trivially.
\end{remark}
}

\paragraph{Analytic solution.}
{
Inserting the ansatz $V(x)=\frac12Px^2$ into the stationary HJB equation gives
\[
\tfrac12\lambda Px^2-\tfrac12x^2+\tfrac12P^2x^2=0
\qquad\Longleftrightarrow\qquad
P^2+\lambda P-1=0 ,
\]
so that the problem admits the explicit quadratic solution
\begin{equation}\label{eq:1d_P}
V(x) = \frac12 P x^2,
\qquad
P = \frac{\sqrt{\lambda^2+4}-\lambda}{2}.
\end{equation}
(For $\lambda\downarrow0$ one recovers $P=1$, the undiscounted linear--quadratic
value.)
}
The optimal feedback control is
\begin{equation}
a_*(x) = - V'(x) = -P x.
\end{equation}

\paragraph{Spatial discretization.}
We truncate the domain to $[-L,L]$ with $L=3$
and discretize it uniformly with mesh size $h$:
\[
x_i = -L + i h, \qquad i=0,\dots,N_x.
\]

Dirichlet boundary conditions are imposed using the analytic solution:
\[
V^h(-L)=V(-L), \qquad V^h(L)=V(L).
\]

The discrete gradient and Laplacian are defined by centered differences:
\begin{align}
\nabla_h V_i &= \frac{V_{i+1}-V_{i-1}}{2h},\\
\Delta_h V_i &= \frac{V_{i+1}-2V_i+V_{i-1}}{h^2}.
\end{align}

{
\paragraph{Admissible control set and viscosity coefficient.}
The control set is the compact interval $A=[-a_{\max},a_{\max}]$ with
\begin{equation}\label{eq:1d_amax}
a_{\max}=1.2\,PL>PL,
\end{equation}
so that the continuous optimal feedback $a_*(x)=-Px$ is interior to $A$
on $[-L,L]$. For the reported meshes, we also verify directly that the
converged discrete feedback is strictly inside $A$, so clipping is inactive
at the discrete fixed point.
Since $f(x,a)=a$, condition \eqref{eq:Nchoice_new} reads
\begin{equation}\label{eq:1d_N}
N\ \ge\ \max\Bigl\{1,\frac{a_{\max}}{2}\Bigr\},
\end{equation}
and we take equality in \eqref{eq:1d_N}.
}

\paragraph{Semi-discrete PI scheme.}
Given a policy $a_n(x_i)$, the evaluation step solves
{
\begin{equation}\label{eq:1d_eval}
\lambda V_{n,i}^h
- \frac12 x_i^2
- \frac12 a_{n,i}^2
- a_{n,i} \nabla_h V_{n,i}^h
=
N h \Delta_h V_{n,i}^h,
\end{equation}
}
for $i=1,\dots,N_x-1$, {which is exactly $\mathcal L^h_{a_n}V^h_n=0$
for the operator \eqref{eq:Lalpha}.}

This is the discrete counterpart of
\[
\lambda V + \sup_a\{-c - p a\} = 0,
\]
with artificial viscosity ensuring monotonicity of the scheme.

The improvement step is {the greedy update}
\begin{equation}\label{eq:1d_improve}
a_{n+1,i} = {\Pi_{[-a_{\max},a_{\max}]}\bigl(-\nabla_h V_{n,i}^h\bigr)},
\end{equation}
{where $\Pi$ denotes the projection onto $A$.}

\paragraph{Numerical implementation.}
The policy evaluation step yields a tridiagonal linear system,
which is solved directly, up to roundoff, using the Thomas algorithm. This allows each PI step to be computed in linear time.
{%
The sign structure of \eqref{eq:1d_eval} is checked at every iteration: the
off-diagonal coefficients $-N/h\pm a_{n,i}/(2h)$ are nonpositive precisely under
\eqref{eq:1d_N}. This is the stencil monotonicity of
Lemma~\ref{lem:mono_operator}; together with the fixed Dirichlet data, it gives
the finite-grid comparison principle throughout the run.}

\paragraph{Experimental parameters.}
We use
\[
\lambda=1, \qquad L=3, \qquad h=0.03,
\]
and run PI for {$15$} iterations.
The initial policy is set to zero:
\[
a_0(x)\equiv 0.
\]
{%
With these values, \eqref{eq:1d_P}, \eqref{eq:1d_amax} and \eqref{eq:1d_N} give
\[
P=0.6180,
\qquad
a_{\max}=2.2249,
\qquad
N=1.1125,
\qquad
\beta_h=\frac{2N/h}{\lambda+2N/h}=0.98670 .
\]
}

\paragraph{Error metrics.}
At each iteration we compute{, in the norms of \eqref{eq:rms_norm},}
\begin{itemize}
\item {$\|V_n^h - V\|_{2,h}$ and $\|V_n^h - V\|_{L^\infty}$,}
\item {$\|V_n^h - V_{n-1}^h\|_{2,h}$ (PI residual).}
\end{itemize}

{
\paragraph{Predicted interior plateau.}
Because $V(x)=\frac12Px^2$ is quadratic, the centered operators are exact,
$\nabla_hV=V'$ and $\Delta_hV=V''=P$.
Substituting $V+C$ into \eqref{eq:HJBh_new} with the optimal feedback therefore
gives $\lambda C=NhP$, so that the shifted function
\begin{equation}\label{eq:1d_plateau}
\widehat V^h:=V+\frac{NhP}{\lambda}
\end{equation}
solves the semi-discrete equation exactly at every interior grid node of
$(-L,L)$, where $|V'(x)|=P|x|\le PL<a_{\max}$ so that the quadratic branch of
\eqref{eq:1d_H} is the active one.
It does \emph{not} satisfy the imposed Dirichlet conditions
$V^h(\pm L)=V(\pm L)$, so on the truncated interval $V^h$ differs from
$\widehat V^h$ by a boundary correction, and $NhP/\lambda$ must be read as the
interior plateau predicted by the consistency error, not as an exact identity
for the bounded-domain Dirichlet problem.
In the inactive-clipping regime, the correction $w:=V^h-\widehat V^h$
satisfies the nonlinear discrete equation
\[
\lambda w+Px\,\nabla_h w+\tfrac12|\nabla_h w|^2-Nh\,\Delta_h w=0,
\qquad w(\pm L)=-NhP/\lambda.
\]
This follows by subtracting the two Bellman equations; it is not the
homogeneous linear equation of a single frozen policy. We do not quantify the
width of this boundary correction here.
What matters for the experiment is the numerical observation that the correction
decays rapidly away from $x=\pm L$: at the center $x=0$, the correction is
below the displayed precision and $|V^h-V|$ agrees with its grid maximum to
that precision. The location of a floating-point maximizer within this nearly
flat plateau is not a stable diagnostic.
This is what makes the plateau a quantitative test rather than a qualitative one.
For the parameters above the prediction is $NhP/\lambda=2.0626\cdot10^{-2}$, and
the stored value of $\|V^h-V\|_{L^\infty}$ agrees with the prediction
to eight significant digits.
Note also that \eqref{eq:1d_plateau} deteriorates as $\lambda\downarrow0$,
in agreement with the loss of coercivity discussed after \eqref{eq:HJB}.
}

\paragraph{Results.}
Figure~\ref{fig:1d_fixedh_main} illustrates the numerical behavior of the proposed scheme. In particular, the error curve in Figure~\ref{fig:1d_fixedh_main}(b) exhibits two distinct regimes.

For small values of $n$, the error $\|V_n^h - V\|$ decays rapidly,
{consistently with the geometric bound of
Theorem~\ref{thm:fixed_h_supnorm} for fixed mesh size $h$.}
As $n$ increases, the decay slows down and the error eventually reaches a plateau
determined by the discretization error $\|V^h - V\|$.
Beyond this point, further iterations yield negligible improvement.
{%
The dashed line in Figure~\ref{fig:1d_fixedh_main}(b) is the level
interior plateau $NhP/\lambda$ predicted by \eqref{eq:1d_plateau}; the computed
error settles on it after six iterations.}

This behavior clearly separates the iteration error from the discretization error,
in agreement with the estimate
\[
\|V_n^h - V\|_\infty
\le
C_1 e^{-c n h} + C_2 \sqrt{h}.
\]
Moreover, the residual decay shown in Figure~\ref{fig:1d_fixedh_main}(c)
confirms {that the PI residual contracts to roundoff level within a
few iterations.}

{
\begin{remark}[observed rate versus the contraction factor $\beta_h$]
\label{rem:rate_vs_beta}
Theorem~\ref{thm:fixed_h_supnorm} is an upper bound, and for this example it is
far from being attained.
With $\beta_h=0.98670$, the bound
$\|V_n^h-V^h\|_{L^\infty}\le\beta_h^{\,n}\|V_0^h-V^h\|_{L^\infty}$ requires
$n=1032$ iterations to guarantee six digits, whereas the computed sequence
achieves this in $7$ iterations; the measured ratios
$\|V^h_{n+1}-V^h\|_{L^\infty}/\|V^h_n-V^h\|_{L^\infty}$ for $n=0,\dots,4$ are
$4.6\cdot10^{-1}$, $6.3\cdot10^{-1}$, $5.0\cdot10^{-1}$, $3.3\cdot10^{-1}$,
$4.2\cdot10^{-2}$, {i.e.\ far below $\beta_h$ throughout and dropping
sharply in the last step, rather than being governed by a fixed factor}.
This is expected: $\beta_h$ is the contraction factor of the \emph{affine}
resolvent map $T_\alpha$ and does not use the optimality of the improvement
step, whereas the iteration actually performed solves the frozen-policy equation
exactly at each stage.
The late decrease in these ratios is consistent with the classical interpretation of
Howard-type policy iteration as a Newton method for the Bellman
equation~\cite{PutermanBrumelle1979,SantosRust2004,BokanowskiMarosoZidani2009},
and with the direct fixed-$h$ quadratic estimate of
Proposition~\ref{prop:local_quadratic}; see also Remark~\ref{rem:nh_sharpness}.
The experiments therefore confirm the validity of the bound, and show that the
practical convergence on these benchmarks is considerably faster than the
guaranteed rate.
\end{remark}

\begin{remark}[monotonicity of the PI sequence]\label{rem:mono_numeric}
The pointwise monotonicity $V^h_{n+1}\le V^h_n$ predicted by
Proposition~\ref{prop:mono} is monitored at every iteration through
$\max_i(V^h_{n+1,i}-V^h_{n,i})$.
Over the whole run this quantity never exceeds $4\cdot10^{-15}$, i.e.\ it is zero
up to roundoff.
\end{remark}
}

\begin{figure}[!t]
  \centering
  \begin{subfigure}[t]{0.32\textwidth}
    \centering
    \includegraphics[width=\textwidth]{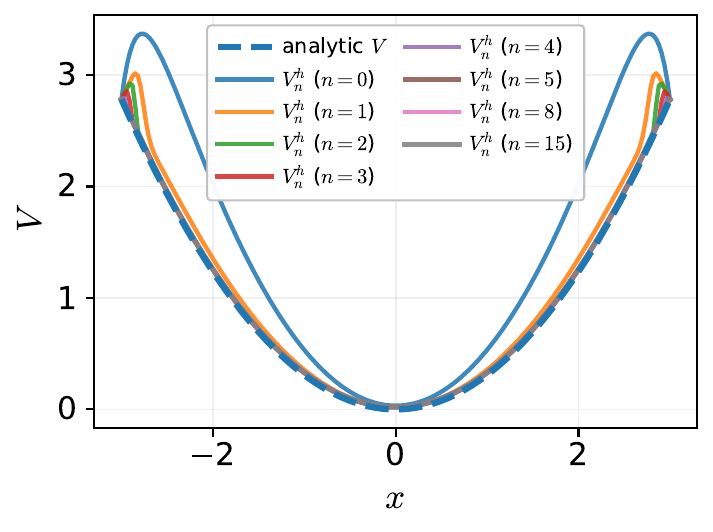}
    \caption{Value iterates vs.\ analytic solution.}
    \label{fig:1d_pi_curves}
  \end{subfigure}\hfill
   \begin{subfigure}[t]{0.32\textwidth}
   \centering
  \includegraphics[width=\textwidth]{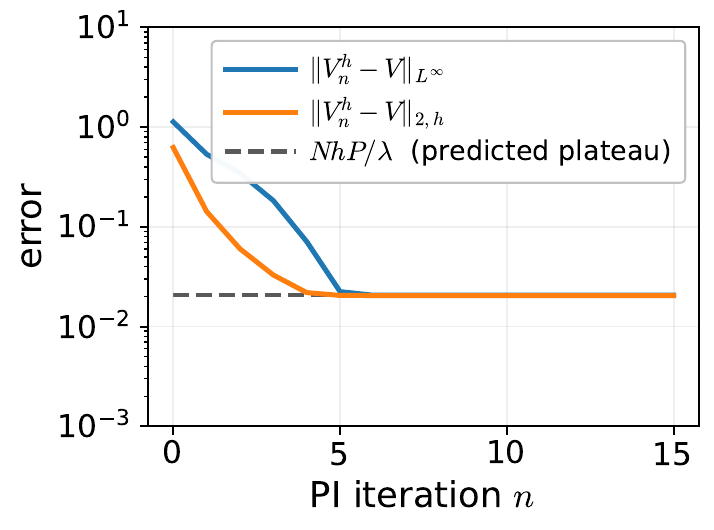}
    \caption{Error to the analytic solution.}
  \end{subfigure}\hfill
  \begin{subfigure}[t]{0.32\textwidth}
  \centering
    \includegraphics[width=\textwidth]{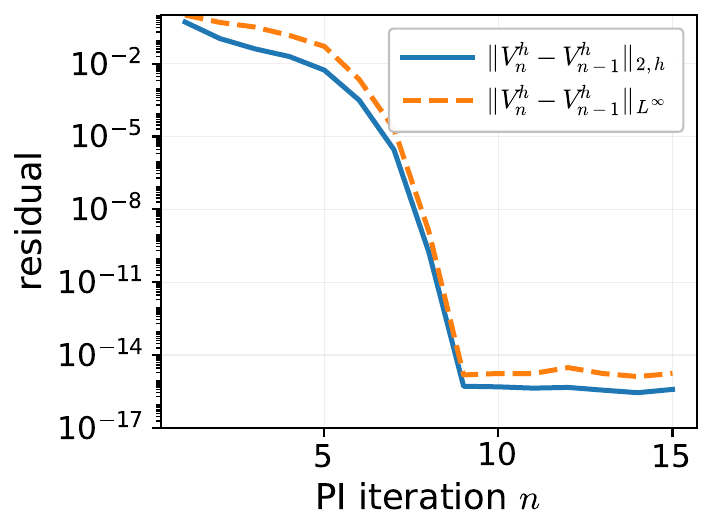}
    \caption{Policy iteration residual.}
    \label{fig:1d_pi_residual}
  \end{subfigure}

  \caption{Fixed-$h$ policy iteration for the one-dimensional discounted quadratic control problem. The left panel shows convergence of the value iterates, the middle panel illustrates the decay-then-plateau behavior of the error, and the right confirms {the collapse} of the PI residual.
  Together, these results clearly separate iteration error from discretization error{; the dashed line in the middle panel is the interior plateau $NhP/\lambda$ predicted by \eqref{eq:1d_plateau}}.}
  \label{fig:1d_fixedh_main}
\end{figure}

{
\subsection{Mesh refinement: the vanishing-viscosity rate and the \texorpdfstring{$nh$}{nh}-coupling}
\label{subsec:numerics_1d_hsweep}

The coefficients of the one-dimensional problem do not depend on $h$, so this is
the benchmark on which the two mesh-dependent predictions of the theory can
actually be tested, namely the vanishing-viscosity estimate of
Theorem~\ref{thm:h_to_0} and the $nh$-coupling of
Section~\ref{sec:total_error}.
We repeat the experiment of Section~\ref{subsec:numerics_1d_fixed_h} for
$h\in\{0.12,0.06,0.03,0.015,0.0075\}$ with $\lambda=1$, $L=3$, $a_0\equiv0$,
keeping $a_{\max}$ and $N=1.1125$ fixed.
For each mesh we record the converged discretization error
$\|V^h-V\|_{L^\infty}$ and the number
\[
n_{\rm obs}
:=
\min\bigl\{n:\ \|V^h_n-V^h\|_{L^\infty}\le10^{-6}\|V^h_0-V^h\|_{L^\infty}\bigr\},
\]
together with the sufficient integer count
$n_{\beta}=\lceil\log(10^{-6})/\log\beta_h\rceil$ predicted by
Theorem~\ref{thm:fixed_h_supnorm}.

\begin{table}[!ht]
\centering
\begin{tabular}{cccccc}
\hline
$h$ & $\beta_h$ & $n_{\rm obs}$ & $n_{\beta}$
& $\|V^h-V\|_{L^\infty}$ & $\|V^h-V\|_{L^\infty}/h$\\
\hline
$0.12$   & $0.94883$ & $4$  & $264$  & $8.2505\cdot10^{-2}$ & $0.68754$\\
$0.06$   & $0.97374$ & $5$  & $520$  & $4.1252\cdot10^{-2}$ & $0.68754$\\
$0.03$   & $0.98670$ & $7$  & $1032$ & $2.0626\cdot10^{-2}$ & $0.68754$\\
$0.015$  & $0.99330$ & $9$  & $2057$ & $1.0313\cdot10^{-2}$ & $0.68754$\\
$0.0075$ & $0.99664$ & $13$ & $4106$ & $5.1565\cdot10^{-3}$ & $0.68754$\\
\hline
\end{tabular}
\caption{Mesh refinement for the one-dimensional benchmark. The last column is
constant to the displayed precision and equal to $NP/\lambda$, in agreement with
the interior plateau \eqref{eq:1d_plateau}.}
\label{tab:1d_hsweep}
\end{table}

Two conclusions follow from Table~\ref{tab:1d_hsweep} and
Figure~\ref{fig:1d_hsweep}.

First, the measured discretization error agrees with $NPh/\lambda$ to the
displayed precision on every mesh, so that its observed order is $1$.
This is consistent with, but strictly better than, the $O(\sqrt h)$ bound of
Theorem~\ref{thm:h_to_0}: the $\sqrt h$ rate is a worst-case estimate obtained by
doubling of variables for merely Lipschitz viscosity solutions, whereas the value
function of the present example is smooth, and for smooth $V$ the consistency
error of the scheme is $O(h)$.
Remark~\ref{rem:sqrt_h_sharpness} establishes sharpness analytically with a
bounded nonsmooth value function. The present mesh experiment instead measures
the behavior of a smooth benchmark.

Second, for a fixed relative tolerance $\varepsilon=10^{-6}$,
\[
n_\beta=\left\lceil\frac{\log(1/\varepsilon)}
{\log(1+\lambda h/(2dN))}\right\rceil=O(h^{-1}).
\]
Halving $h$ four times increases $n_{\rm obs}$ from $4$ to $13$, while
$n_\beta$ grows by a factor of about $16$. These five meshes show a substantial
gap between observed counts and the sufficient bound, but do not establish an
asymptotic law for $n_{\rm obs}$.
The different prescription $n=O(h^{-1}\log(1/h))$ in
Section~\ref{sec:total_error} balances iteration error with a mesh-dependent
$O(\sqrt h)$ discretization error; it is not the fixed-tolerance prediction.

\begin{figure}[!t]
  \centering
  \begin{subfigure}[t]{0.48\textwidth}
    \centering
    \includegraphics[width=\textwidth]{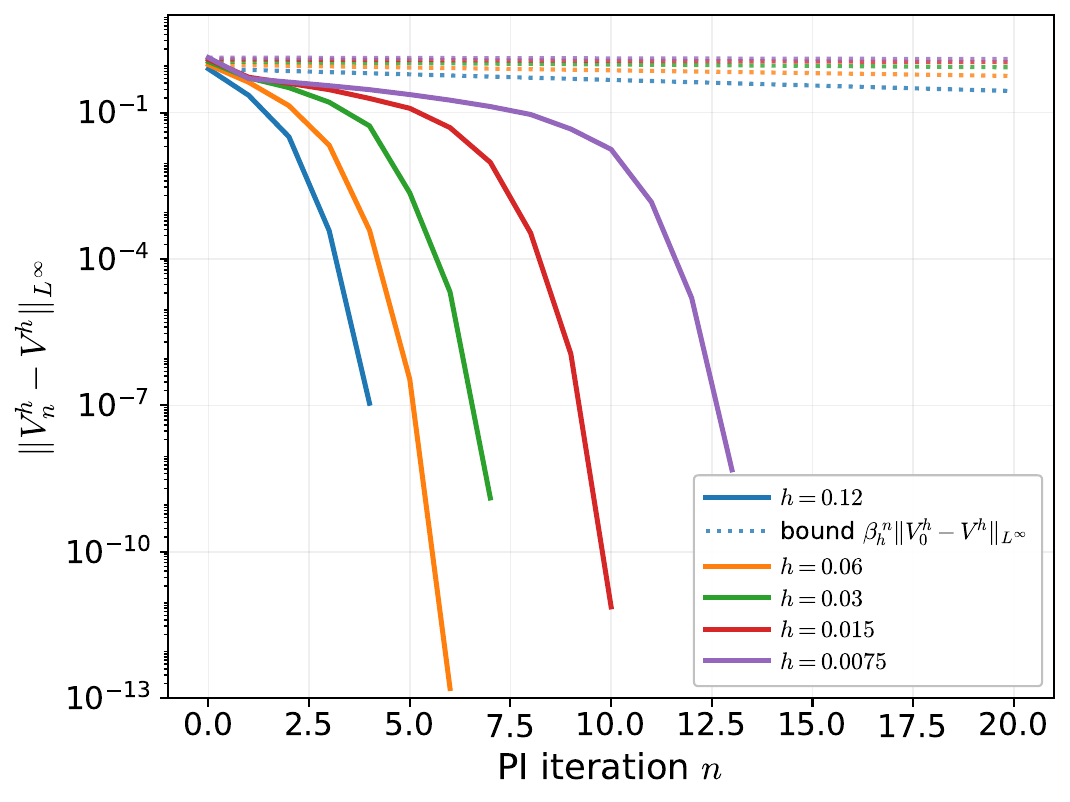}
    \caption{$\|V_n^h-V^h\|_{L^\infty}$ for five mesh sizes (solid) against the
    bounds $\beta_h^{\,n}\|V_0^h-V^h\|_{L^\infty}$ (dotted).}
    \label{fig:1d_hsweep_error}
  \end{subfigure}\hfill
  \begin{subfigure}[t]{0.48\textwidth}
    \centering
    \includegraphics[width=\textwidth]{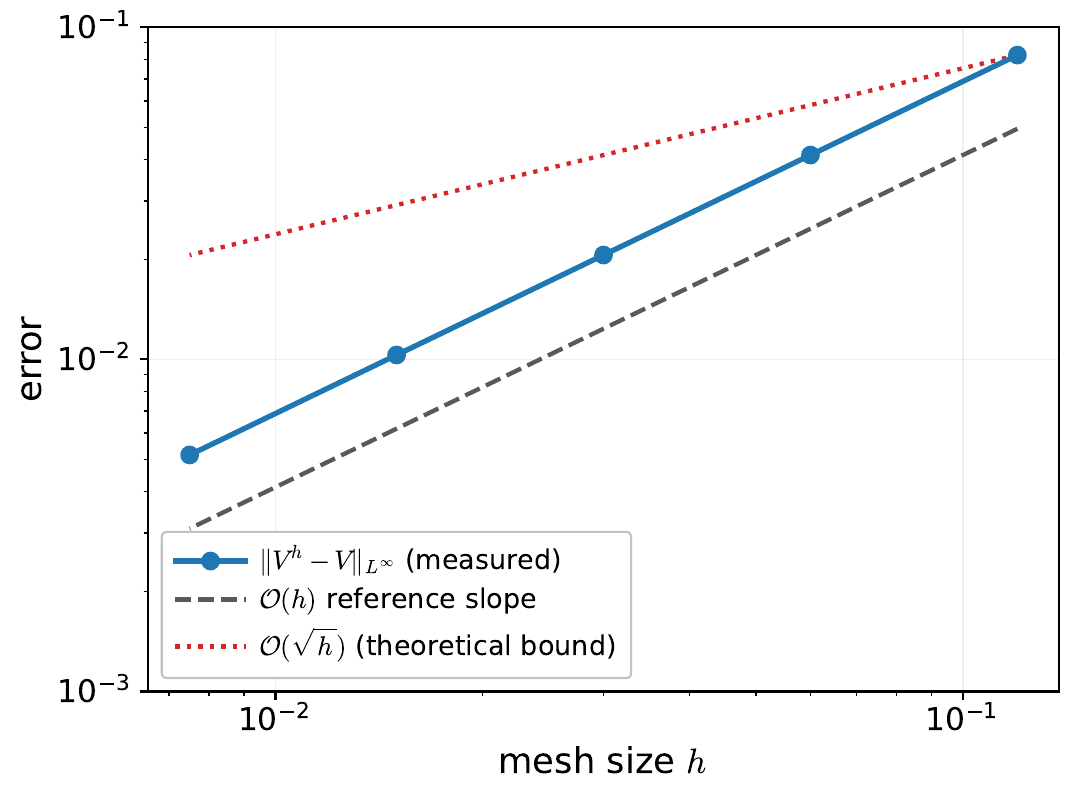}
    \caption{Converged discretization error $\|V^h-V\|_{L^\infty}$ versus $h$,
    with $\mathcal O(h)$ and $\mathcal O(\sqrt h)$ reference slopes.}
    \label{fig:1d_hsweep_plateau}
  \end{subfigure}
  \caption{Mesh refinement study for the one-dimensional benchmark. The left
  panel shows that the guaranteed rate $\beta_h$ is valid but far from sharp;
  the right panel shows that the observed vanishing-viscosity rate is $O(h)$ for
  this smooth example.}
  \label{fig:1d_hsweep}
\end{figure}
}

\subsection{Nonlinear two-dimensional benchmark: fixed-\texorpdfstring{$h$}{h} PI convergence}
\label{sec:num_2d}

We next validate the semi-discrete policy iteration (PI) scheme
in a genuinely nonlinear two-dimensional setting.
As in the one-dimensional fixed-$h$ experiment,
the purpose is to isolate the PI mechanism and examine the decay
of the iterates with respect to the iteration index $n$
for a fixed mesh size $h$.
In the present benchmark, an exact \emph{discrete} reference solution
is manufactured, so that the convergence behavior of the PI iterates
can be observed without an additional continuous--discrete mismatch.

\paragraph{Model problem.}
We consider the two-dimensional deterministic control problem
\begin{equation}
\dot z(t) = b(z(t)) + a(t),
\qquad
z(t)=(x(t),y(t))\in\mathbb{R}^2,
\qquad
a(t)=(a_x(t),a_y(t))\in{[-a_{\max},a_{\max}]^2},
\end{equation}
with drift field
\begin{align}
b_1(x,y)
&=
0.28\sin x
+
0.14\tanh(0.80\,y)
+
0.06\cos(1.20\,x-0.40\,y),
\\
b_2(x,y)
&=
-0.24\sin y
+
0.12\tanh(0.70\,x)
-
0.05\sin(0.90\,x+0.80\,y).
\end{align}
The running cost is taken as
\begin{equation}
c_h(z,a)=q_h(z)+\frac12|a|^2,
\end{equation}
and the infinite-horizon discounted objective is
\begin{equation}
J_h(z;a)
=
\int_0^\infty e^{-\lambda t}
\left(
q_h(z(t))+\frac12|a(t)|^2
\right)\,dt,
\qquad \lambda>0.
\end{equation}

\paragraph{Manufactured reference solution.}
To obtain a nonlinear benchmark with known ground truth,
we prescribe the smooth nonseparable function
\begin{align}
V^\ast(x,y)
&=
0.08(x^2+1.40\,y^2)
+
0.11\sin(1.30\,x+0.20)\cos(0.70\,y-0.10)
\nonumber\\
&\quad
+
0.055\tanh(0.90\,xy)
+
0.045\sin(0.60\,xy+0.35\,x-0.25\,y)
\nonumber\\
&\quad
+
0.035\cos(1.70\,x-0.40\,y)
+
0.025\arctan(0.80\,x-1.10\,y)
\nonumber\\
&\quad
+
0.020\sin(2.20\,x)\sin(1.40\,y).
\end{align}
This reference profile is intentionally nonlinear and nonseparable,
so that the resulting benchmark is substantially less structured
than the quadratic examples considered earlier.

Rather than requiring $V^\ast$ to solve the continuous HJB equation,
we construct the source term so that $V^\ast$ is an exact solution of the semi-discrete scheme for the fixed mesh size $h$.

Let $\nabla_h$ and $\Delta_h$ denote the centered discrete gradient
and Laplacian.
We then define
\begin{equation}
q_h(z)
=
\lambda V^\ast(z)
-
b(z)\cdot \nabla_h V^\ast(z)
+
\frac12 \bigl|\nabla_h V^\ast(z)\bigr|^2
-
N h \Delta_h V^\ast(z).
\end{equation}
With this choice, $V^\ast$ is an exact fixed point of the
\emph{discrete} semi-discrete scheme, and the corresponding discrete
reference feedback is
\begin{equation}
a_h^\ast(z)=-\nabla_h V^\ast(z).
\end{equation}
Thus, in contrast to the 1D analytic benchmark,
the role of the ground truth is played here by a manufactured exact
reference for the fixed-$h$ discrete problem.

{
\begin{remark}[scope of the manufactured benchmark]\label{rem:2d_scope}
As in Remark~\ref{rem:1d_assumptions}, this example is not a literal instance of
the whole-space theorems: the manufactured cost $q_h$ is not bounded on
$\mathbb R^2$, and the problem is solved on $[-L,L]^2$ with exact Dirichlet data.
What is verified is the bounded-domain analogue of the fixed-$h$ monotonicity
and contraction argument, and all constants entering the analysis are finite on
the computational box and are reported below.

The source $q_h$ depends explicitly on $h$, so this construction defines a
\emph{different} discrete problem for each mesh size, and $V^\ast$ plays the role
of $V^h$, not of the continuous value function $V$.
Consequently the experiment below tests the bounded-domain counterpart of the
fixed-$h$ argument in Theorem~\ref{thm:fixed_h_supnorm}, i.e.\ the convergence
$V^h_n\to V^h$ for a fixed mesh, and carries no information about the
vanishing-viscosity estimate of Theorem~\ref{thm:h_to_0} or about the
$nh$-coupling; those are tested in Section~\ref{subsec:numerics_1d_hsweep}, where
the coefficients are independent of $h$.
All error quantities reported below are therefore pure policy-iteration errors,
with zero discretization component.
\end{remark}
}

\paragraph{Spatial truncation and discretization.}
Although the state space is $\mathbb{R}^2$,
we truncate the computational domain to
\[
[-L,L]^2,
\qquad L=2,
\]
with a uniform Cartesian mesh of size $h$:
\[
x_i=-L+ih,\qquad y_j=-L+jh.
\]
Dirichlet boundary conditions are imposed directly from the reference
solution,
\[
V^h(x,y)=V^\ast(x,y)
\qquad \text{for } (x,y)\in \partial([-L,L]^2),
\]
so that boundary effects do not contaminate the interior PI dynamics.

The discrete gradient and Laplacian are defined by centered differences:
\begin{align}
\nabla_h V_{i,j}
&=
\left(
\frac{V_{i+1,j}-V_{i-1,j}}{2h},
\frac{V_{i,j+1}-V_{i,j-1}}{2h}
\right),\\
\Delta_h V_{i,j}
&=
\frac{
V_{i+1,j}+V_{i-1,j}+V_{i,j+1}+V_{i,j-1}
-4V_{i,j}
}{h^2}.
\end{align}

\paragraph{Semi-discrete PI scheme.}
Given a policy $a_n=(a_{x,n},a_{y,n})$,
the evaluation step solves the linear difference equation
{
\begin{equation}\label{eq:2d_exp}
\lambda V_{n}^{h}
-
q_h
-
\frac12 |a_n|^2
-
(b+a_n)\cdot \nabla_h V_n^{h}
=
N h \Delta_h V_n^{h},
\end{equation}
that is, $\mathcal L^h_{a_n}V^h_n=0$ for the operator \eqref{eq:Lalpha} with
$f_{a_n}=b+a_n$ and $c_{a_n}=q_h+\frac12|a_n|^2$.
}
The artificial viscosity coefficient $N$
is chosen to satisfy the monotonicity requirement for the clipped drift.
{%
Since the stencil is monotone as soon as the \emph{componentwise} condition
$N\ge\frac12\max_i\|b_i+a_i\|_{L^\infty}$ holds, and since
\eqref{eq:Nchoice_new} additionally normalizes $N\ge1$, we take
\begin{equation}\label{eq:2d_N}
N
=
\max\Bigl\{1,\
1.05\cdot\tfrac12\bigl(\max_i\|b_i\|_{L^\infty}+a_{\max}\bigr)\Bigr\}.
\end{equation}
}

{
\paragraph{Admissible control set and reported quantities.}
The control box is $A=[-a_{\max},a_{\max}]^2$ with
\begin{equation}\label{eq:2d_amax}
a_{\max}=1.10\max_{z,i}\bigl|(\nabla_hV^\ast(z))_i\bigr|,
\end{equation}
so that the reference feedback $a_h^\ast=-\nabla_hV^\ast$ never activates the
clipping and is an admissible fixed point of the improvement step.
For $L=2$ and $h=0.05$ the computed values are
\[
\max_{z,i}|(\nabla_hV^\ast(z))_i|=0.5522,
\quad
a_{\max}=0.6074,
\quad
\max_i\|b_i\|_{L^\infty}=0.4432,
\quad
N=1.0000,
\]
Thus \eqref{eq:2d_N} holds, since the monotonicity threshold is
\[
\tfrac12\bigl(\max_i\|b_i\|_{L^\infty}+a_{\max}\bigr)=0.5253<N.
\]
The remaining grid-based quantities used in the comparison are
\[
\|q_h\|_{L^\infty}=0.9321,
\qquad
C_1:=\frac{2(\|q_h\|_{L^\infty}+a_{\max}^2)}{\lambda}=2.6022,
\qquad
\beta_h=\frac{2dN/h}{\lambda+2dN/h}=0.98765,
\]
An analytic bound for the drift derivatives is
\[
|Db(x,y)|\le
\begin{pmatrix}0.352&0.136\\0.129&0.280\end{pmatrix}
\quad\text{entrywise on }\mathbb R^2.
\]
Thus $\Lip_x(f)=\Lip(b)\le
\sqrt{0.352^2+0.136^2+0.129^2+0.280^2}=0.48728$ (rounded), and the
stronger discount condition of Lemma~\ref{lem:Vh_lipschitz}, which also
implies the condition of Theorem~\ref{thm:h_to_0}, is certified by
$\sqrt d\,\Lip_x(f)\le0.68912<1=\lambda$.
The displayed norms of $q_h$ and $b_i$ above are grid maxima, not certified
whole-space suprema.
}

The policy improvement step is {the greedy update analysed in
Theorem~\ref{thm:fixed_h_supnorm},}
\begin{equation}\label{eq:2d_improve}
a_{n+1}
=
\Pi_{[-a_{\max},a_{\max}]^2}\bigl(-\nabla_h V_n^h\bigr),
\end{equation}
{where $\Pi$ denotes componentwise clipping onto the admissible
control box.
A damped variant of \eqref{eq:2d_improve} is discussed separately in
Remark~\ref{rem:relaxed}.}

\paragraph{Linear solver.}
{%
Each policy evaluation step yields a monotone linear system on the fixed grid.
Because Theorem~\ref{thm:fixed_h_supnorm} assumes \emph{exact} policy
evaluation, we solve this system by a sparse direct factorization, so that the
error floor of the experiment is determined by roundoff rather than by an inner
iterative tolerance.
A vectorized red--black Gauss--Seidel SOR solver is also provided in the
accompanying code and produces indistinguishable results down to its stopping
tolerance.
}

\paragraph{Experimental parameters.}
Unless otherwise specified, we fix
\[
\lambda=1,
\qquad
L=2,
\qquad
h=0.05.
\]
Policy iteration is run for {$15$} iterations.
The initial policy is chosen deliberately far from the reference one,
roughly as the opposite of $a_h^\ast$ plus a smooth oscillatory
perturbation, so that the successive PI iterates remain visually
distinguishable during the early stages of the experiment.

\paragraph{Error metrics.}
We use the same error metrics as in the one-dimensional experiment. In addition, one-dimensional slices of the value function are recorded for representative fixed values of $x$ and $y$.

\begin{figure}[!t]
  \centering
  \begin{subfigure}[t]{0.32\textwidth}
    \centering
    \includegraphics[width=\textwidth]{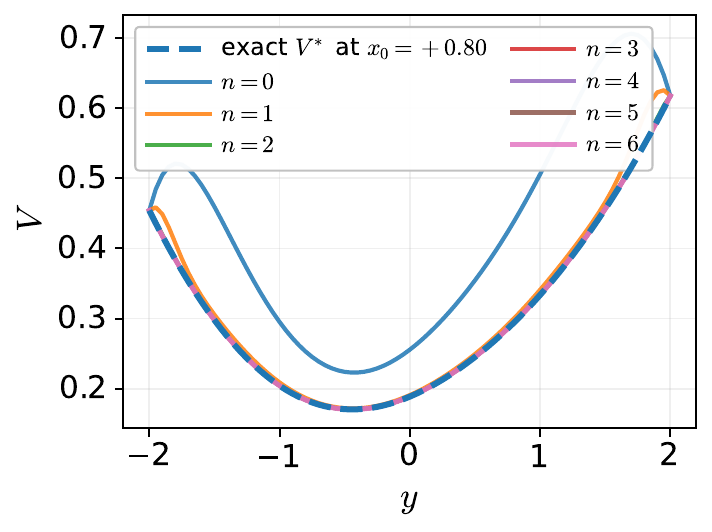}
    \caption{PI iterates $V_n^h(x_0,\cdot)$ versus the manufactured reference solution $V^\ast(x_0,\cdot)$ at the fixed slice $x_0=0.80$.}
    \label{fig:2d_slice_x}
  \end{subfigure}\hfill
  \begin{subfigure}[t]{0.32\textwidth}
    \centering
    \includegraphics[width=\textwidth]{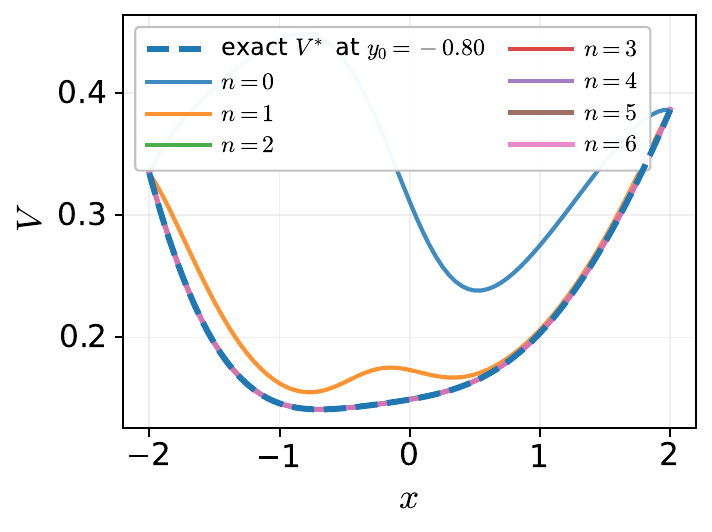}
    \caption{PI iterates $V_n^h(\cdot,y_0)$ versus the manufactured reference solution $V^\ast(\cdot,y_0)$ at the fixed slice $y_0=-0.80$.}
    \label{fig:2d_slice_y}
  \end{subfigure}\hfill
  \begin{subfigure}[t]{0.32\textwidth}
    \centering
    \includegraphics[width=\textwidth]{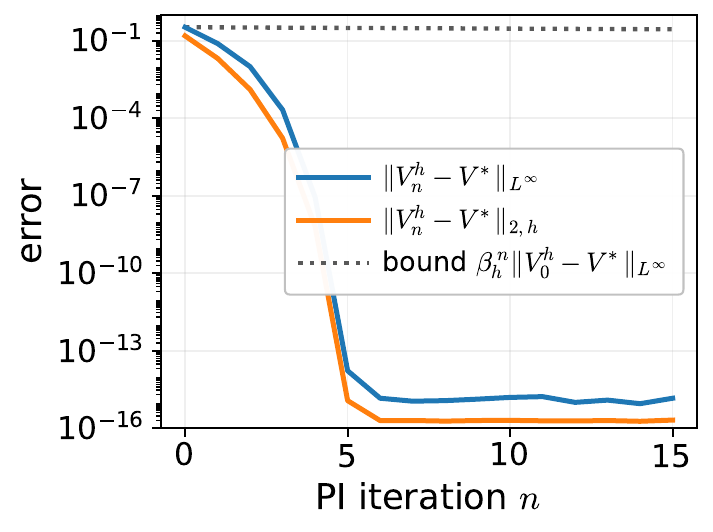}
    \caption{Error to the manufactured reference solution versus PI index $n$ (log scale){, together with the bound $\beta_h^{\,n}\|V_0^h-V^\ast\|_{L^\infty}$}.}
    \label{fig:2d_error}
  \end{subfigure}
  \caption{Fixed-$h$ PI convergence in the nonlinear 2D benchmark{\ with the greedy update \eqref{eq:2d_improve}}. The three panels show two representative one-dimensional slices of the value iterates and the decay of the global error to the reference solution.}
  \label{fig:2d_nonlinear_main}
\end{figure}

Figure~\ref{fig:2d_nonlinear_main} shows that the value iterates decrease
monotonically with respect to the PI index $n$.
The one-dimensional slices make this behavior visible pointwise along
representative coordinate directions, while the global error curves
quantify the decay of the grid maximum $\|V_n^h-V^\ast\|_{L^\infty}$ and
the RMS error $\|V_n^h-V^\ast\|_{2,h}$ for fixed $h$.
Because the benchmark is discrete-manufactured, the reference function
$V^\ast$ is an exact fixed point of the discrete scheme, and therefore
the error curves decay toward zero
{up to roundoff: $\|V^h_n-V^\ast\|_{L^\infty}$ drops from
$3.36\cdot10^{-1}$ to $1.4\cdot10^{-15}$ in six iterations, with successive
ratios $2.3\cdot10^{-1}$, $1.3\cdot10^{-1}$, $2.1\cdot10^{-2}$,
$4.1\cdot10^{-4}$, $1.9\cdot10^{-7}$.
As in Remark~\ref{rem:rate_vs_beta}, this {empirically Newton-like
local behavior on the present benchmark} is much faster than the guaranteed
factor $\beta_h=0.98765$, which is drawn as a dotted line in
Figure~\ref{fig:2d_error}.
The monotonicity diagnostic $\max_{i,j}(V^h_{n+1,i,j}-V^h_{n,i,j})$ never becomes
positive beyond roundoff, in agreement with Proposition~\ref{prop:mono}.}

{
\begin{remark}[a damped variant of the improvement step]\label{rem:relaxed}
It is also instructive to run the relaxed update
\begin{equation}\label{eq:2d_relaxed}
a_{n+1}
=
(1-\theta)a_n
+
\theta\,\Pi_{[-a_{\max},a_{\max}]^2}\bigl(-\nabla_hV_n^h\bigr),
\qquad\theta\in(0,1],
\end{equation}
which for $\theta<1$ makes the approach to the reference solution visually
clearer.
We stress that \eqref{eq:2d_relaxed} is \emph{not} the algorithm analysed in this
paper: the policy-improvement identity of Lemma~\ref{lem:fixed_point_representation},
on which the proof of Theorem~\ref{thm:fixed_h_supnorm} rests, uses the exact
maximizer and fails for $\theta<1$.
Run with $\theta=0.18$ and $60$ iterations, the damped scheme still converges
monotonically to $V^\ast$. Its error ratio is about $0.75$ in the first few
steps and approaches $0.6724=(1-0.18)^2$ at later steps before roundoff;
this observed ratio depends on the relaxation and is not the bound
$\beta_h=0.98765$.
The corresponding figures are provided with the accompanying code and are
reported only as a supplementary damped variant.
\end{remark}
}

{
\subsection{Boundary-free neural policy evaluation}
\label{sec:num_inexact}

Sections~\ref{subsec:numerics_1d_fixed_h}--\ref{sec:num_2d} illustrate
the semi-discrete mechanism on bounded computational domains with prescribed
Dirichlet data, using direct linear solves for policy evaluation up to roundoff.
The present diagnostic uses the same interior benchmark coefficients as
Section~\ref{sec:num_2d}, including $a_{\max}=0.6074$, $N=1.0000$, and $q_h$,
but changes both the evaluator and the boundary treatment.
It therefore does not isolate the effect of inexact policy evaluation.

We fit a neural network $V_\vartheta$ by minimizing the mean square of the
interior semi-discrete residual
\begin{equation}\label{eq:pinn_residual}
r_n=\lambda V_\vartheta-q_h-\tfrac12|a_n|^2
-(b+a_n)\cdot\nabla_hV_\vartheta-Nh\,\Delta_hV_\vartheta
\end{equation}
over the interior nodes, in the spirit of physics-informed neural
networks~\cite{raissi2019physics}, with no boundary supervision.
Here $\nabla_h$ and $\Delta_h$ act on network outputs at shifted grid nodes;
they are not automatic derivatives. Thus the fit targets the semi-discrete
interior equation, not the underlying continuous PDE.
The boundary values remain free unknowns, so the residual equations alone
do not determine a unique solution. The network parameterization,
initialization, and optimization influence which fit is obtained; we assert
no uniqueness of that selection.

\paragraph{Implementation and reproducibility.}
The network has five hidden layers of width $128$, $\tanh$ activations,
and input scaling $(x,y)\mapsto(x/L,y/L)$. Each outer evaluation uses $2000$
full-batch Adam steps with learning rate $10^{-3}$ and weight decay $10^{-10}$,
followed by at most $100$ L-BFGS iterations (step size $1$, strong-Wolfe line
search, history size $50$, and gradient/change tolerances $10^{-14}$).
Network weights are retained between outer iterations; both optimizers are
reset. Each of the two runs starts from seed $1234$ and uses the initial policy
of Section~\ref{sec:num_2d}. The reported rerun uses CPU float64, two PyTorch
threads, Python $3.11.5$, NumPy $1.26.4$, and PyTorch $2.5.1$.
The accompanying code saves the evaluated grid arrays, metrics, and environment
metadata, and regenerates the figures from those arrays without retraining.
Optimizer trajectories and the last reported digits may vary across environments.

\paragraph{Observed monotonicity.}
For neural iterates define the signed maximum increment
\[
m_n=\max_{z\in G_h}(V_\vartheta^n(z)-V_\vartheta^{n-1}(z)),
\]
where $G_h$ includes all nodes, including the boundary. Interior and boundary
maxima are also saved separately; $m_n$ is a signed maximum, not its positive part.
With the greedy update \eqref{eq:2d_improve}, $m_n>10^{-12}$ at $9$ of the
$10$ updates and its largest value is $2.35\cdot10^{-2}$
(the largest interior increment is $1.93\cdot10^{-2}$).
With the damped update \eqref{eq:2d_relaxed}, $\theta=0.18$, there is no
violation over these $10$ updates; see Figure~\ref{fig:pinn2d_mono}.
This is an empirical observation on one benchmark, not a monotonicity guarantee
for damping. Neither Proposition~\ref{prop:mono} nor
Theorem~\ref{thm:fixed_h_supnorm} applies to these boundary-free approximate
evaluations, and this experiment cannot establish that exact evaluation is
necessary for monotonicity. A comparison with an approximate evaluator using
the same Dirichlet data would be needed to isolate evaluation error.

For a fixed policy $a$, a precise distinction is available. Let $V_a$ solve
the discrete evaluation equation on the finite grid with boundary values $g$,
and let $\widetilde V$ have interior residual $r$. The discrete maximum
principle gives
\begin{equation}\label{eq:inexact_pi}
\|\widetilde V-V_a\|_{\infty,G_h}
\le \max\left\{\lambda^{-1}\|r\|_{\infty,G_h^\circ},
\|\widetilde V-g\|_{\infty,\partial G_h}\right\}.
\end{equation}
Indeed, at a positive interior maximum of the error the off-diagonal weights
give $\lambda\max(\widetilde V-V_a)\le\|r\|_\infty$, and the same argument
applies to its negative. This fixed-policy estimate includes boundary error;
it is not an inexact-PI contraction theorem.

\paragraph{Accuracy and boundary diagnostic.}
For $\theta=0.18$, the grid maximum error falls from $6.46\cdot10^{-1}$ at
$n=0$ to $1.59\cdot10^{-2}$ at $n=8$, then rises to $2.21\cdot10^{-2}$ at
$n=10$, about $2.66\%$ of $\|V^\ast\|_{L^\infty}=0.8312$.
The RMS error decreases throughout this run, from $3.32\cdot10^{-1}$ to
$3.31\cdot10^{-3}$. The interior maximum error also rises at the last two
steps, from $1.56\cdot10^{-2}$ at $n=8$ to $1.84\cdot10^{-2}$ at $n=10$.
Thus the late increase is not confined to boundary nodes, even though the
global maximum is attained on the boundary in this run.
The direct solver in Section~\ref{sec:num_2d} reaches roundoff on the
corresponding problem with prescribed exact Dirichlet values.

To examine the influence of boundary supervision, we additionally freeze the
policy at $a_h^\ast$ and compare residual-only fitting with fitting that adds
the mean squared error against exact Dirichlet data on the four edges, with
weight $1$. This is a soft penalty, not exact enforcement.
Both diagnostic fits start from the same seed and use $8000$ Adam steps and
at most $300$ L-BFGS iterations. Relative to the outer-loop evaluator, Adam
weight decay is disabled, and L-BFGS uses history size $100$ and tolerances
$10^{-16}$. Hence this is not solely a fourfold-budget change.
The boundary-free and penalized fits respectively give grid maximum errors
$2.50\cdot10^{-2}$ and $3.77\cdot10^{-3}$, with interior residual maxima
$8.89\cdot10^{-3}$ and $1.04\cdot10^{-2}$.
These comparable, but unequal, residuals and different value errors support
the relevance of boundary control in \eqref{eq:inexact_pi}. A single pair of
fits does not establish an optimization-independent error floor.

\begin{figure}[!t]
  \centering
  \begin{subfigure}[t]{0.32\textwidth}
    \centering
    \includegraphics[width=\textwidth]{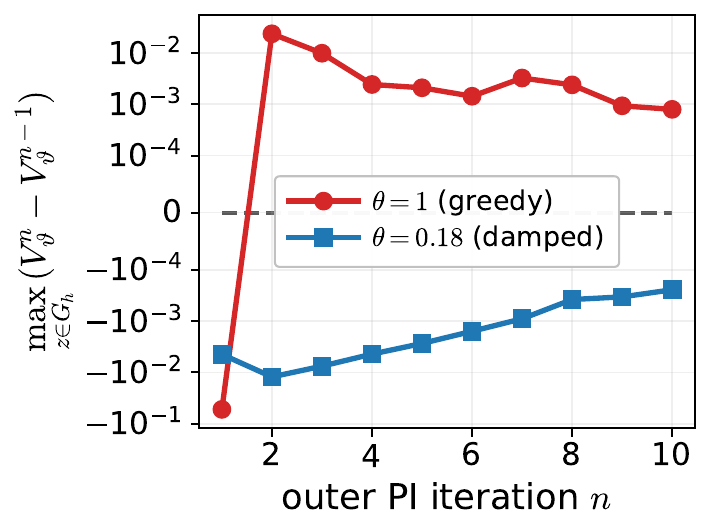}
    \caption{Signed maximum increment over all grid nodes for greedy and damped
    updates (symmetric log scale). Positive values indicate an increase.}
    \label{fig:pinn2d_mono}
  \end{subfigure}\hfill
  \begin{subfigure}[t]{0.32\textwidth}
    \centering
    \includegraphics[width=\textwidth]{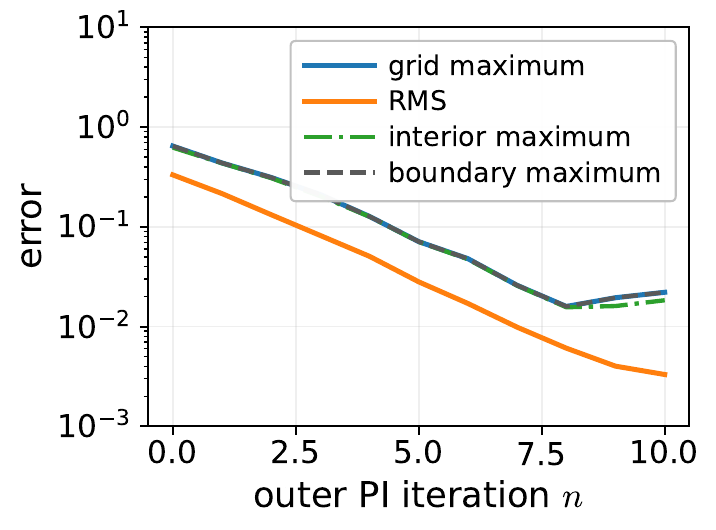}
    \caption{Grid maximum and RMS errors, with the maximum errors restricted
    to the interior and boundary, for $\theta=0.18$.}
    \label{fig:pinn2d_error}
  \end{subfigure}\hfill
  \begin{subfigure}[t]{0.32\textwidth}
    \centering
    \includegraphics[width=\textwidth]{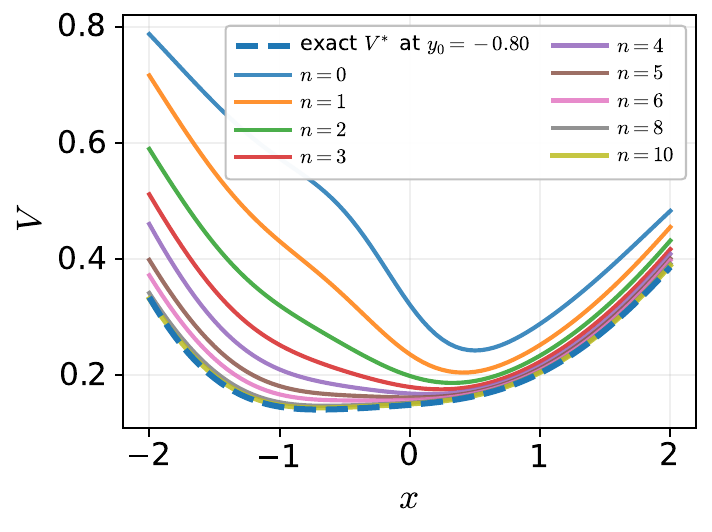}
    \caption{Iterates $V_\vartheta^n(\cdot,y_0)$ and $V^\ast(\cdot,y_0)$ at
    $y_0=-0.80$, for $\theta=0.18$.}
    \label{fig:pinn2d_slice_y}
  \end{subfigure}
  \caption{Boundary-free neural residual fitting with the same interior
  coefficients as Section~\ref{sec:num_2d}. Both the evaluator and boundary
  treatment differ from the Dirichlet finite-difference experiment. The left
  panel compares two updates; the other panels show the damped run.}
  \label{fig:pinn2d_main}
\end{figure}

\begin{remark}
These bounded-domain, boundary-free fits lie outside the whole-space
semi-discrete framework analyzed here. In particular, interior residual
control alone supplies no bound on the boundary error in \eqref{eq:inexact_pi}.
An analysis of approximate policy iteration with controlled residual and
boundary errors remains a direction for future work.
\end{remark}
}

\section{Conclusion}
{
We developed a semi-discrete policy iteration framework for deterministic
infinite-horizon discounted control with bounded Lipschitz data. Centered
differences permit pointwise policy improvement for nonsmooth values, the
artificial-viscosity term makes the stencil monotone, and the positive
discount gives the resolvent contraction. Under the regular policy-map
assumption, exact evaluation produces a decreasing sequence converging
geometrically to the unique semi-discrete solution. The local quadratic
estimate has a constant of order $h^{-2}$.

Under the additional condition $\lambda>\Lip_x(f)$, a doubling-of-variables
argument gives $\|V^h-V\|_\infty\le C\sqrt h$ using only the Lipschitz
regularity of the continuous value function. The bounded cusp example in
Remark~\ref{rem:sqrt_h_sharpness} establishes sharpness of this exponent.
Combining the error bounds gives a sufficient iteration count of order
$h^{-1}\log(1/h)$ to attain total error of order $\sqrt h$.

The bounded-domain experiments illustrate distinct aspects of the estimates.
The smooth 1D benchmark has a discretization plateau of order $h$, with the
stored fixed-mesh error agreeing with $NPh/\lambda$ to eight significant
digits. The 2D manufactured benchmark isolates convergence to the discrete
solution and reaches roundoff under exact evaluation. Both show much faster
local PI convergence than the guaranteed geometric rate. Over the tested
meshes, fixed-tolerance iteration counts remain below the sufficient
$O(h^{-1})$ bound; these observations do not establish an asymptotic law for
the actual counts.

The neural diagnostic fits the interior residual without prescribed boundary
values. Its greedy run exhibits increases in the values, while the damped
run does not over the reported updates. Because both the evaluator and
boundary treatment differ from the direct-solver experiment, this comparison
does not isolate the source of the loss of monotonicity. The frozen-policy
boundary-penalty comparison illustrates why approximate evaluation must
account for boundary errors as well as interior residuals.

A convergence analysis for approximate PI with controlled evaluation errors
remains open in this framework. Other directions include the undiscounted
case, where the strict contraction used here is lost, and scalable
approximations for high-dimensional problems.
}

\bibliographystyle{plain}
\bibliography{reference}

@book{Howard1960,
  title     = {Dynamic Programming and {M}arkov Processes},
  author    = {Howard, Ronald A.},
  year      = {1960},
  publisher = {The Technology Press of M.I.T. and John Wiley \& Sons},
  address   = {New York}
}

@book{Puterman1994,
  title     = {{M}arkov Decision Processes: Discrete Stochastic Dynamic Programming},
  author    = {Puterman, Martin L.},
  series    = {Wiley Series in Probability and Statistics},
  year      = {1994},
  publisher = {John Wiley \& Sons},
  address   = {New York},
  doi       = {10.1002/9780470316887}
}

@article{PutermanBrumelle1979,
  title     = {On the convergence of policy iteration in stationary dynamic programming},
  author    = {Puterman, Martin L. and Brumelle, Shelby L.},
  journal   = {Mathematics of Operations Research},
  volume    = {4},
  number    = {1},
  pages     = {60--69},
  year      = {1979},
  publisher = {INFORMS}
}

@article{Puterman1981,
  title     = {On the convergence of policy iteration for controlled diffusions},
  author    = {Puterman, Martin L.},
  journal   = {Journal of Optimization Theory and Applications},
  volume    = {33},
  number    = {1},
  pages     = {137--144},
  year      = {1981},
  publisher = {Springer},
  doi       = {10.1007/BF00935182}
}

@article{SantosRust2004,
  title     = {Convergence properties of policy iteration},
  author    = {Santos, Manuel S. and Rust, John},
  journal   = {SIAM Journal on Control and Optimization},
  volume    = {42},
  number    = {6},
  pages     = {2094--2115},
  year      = {2004},
  publisher = {SIAM},
  doi       = {10.1137/S0363012902399824}
}

@article{Kleinman1968,
  title     = {On an iterative technique for {R}iccati equation computations},
  author    = {Kleinman, David},
  journal   = {IEEE Transactions on Automatic Control},
  volume    = {13},
  number    = {1},
  pages     = {114--115},
  year      = {1968},
  publisher = {IEEE}
}

@article{Vrabie2009,
  title     = {Adaptive optimal control for continuous-time linear systems based on policy iteration},
  author    = {Vrabie, Draguna and Pastravanu, Octavian and Abu-Khalaf, Murad and Lewis, Frank L.},
  journal   = {Automatica},
  volume    = {45},
  number    = {2},
  pages     = {477--484},
  year      = {2009},
  publisher = {Elsevier},
  doi       = {10.1016/j.automatica.2008.08.017}
}

@article{KunduKunisch2024,
  title   = {Policy iteration for {H}amilton--{J}acobi--{B}ellman equations with control constraints},
  author  = {Kundu, Sudeep and Kunisch, Karl},
  journal = {Computational Optimization and Applications},
  volume  = {87},
  pages   = {785--809},
  year    = {2024},
  doi     = {10.1007/s10589-021-00278-3}
}

@article{BokanowskiMarosoZidani2009,
  title     = {Some convergence results for {H}oward's algorithm},
  author    = {Bokanowski, Olivier and Maroso, Stefania and Zidani, Hasnaa},
  journal   = {SIAM Journal on Numerical Analysis},
  volume    = {47},
  number    = {4},
  pages     = {3001--3026},
  year      = {2009},
  publisher = {SIAM}
}

@article{Kerimkulov2020,
  title     = {Exponential convergence and stability of {H}oward's policy improvement algorithm for controlled diffusions},
  author    = {Kerimkulov, Bekzhan and {\v S}i{\v s}ka, David and Szpruch, Lukasz},
  journal   = {SIAM Journal on Control and Optimization},
  volume    = {58},
  number    = {3},
  pages     = {1314--1340},
  year      = {2020},
  publisher = {SIAM},
  doi       = {10.1137/19M1236758}
}

@article{CrandallLions1984,
  title   = {Two approximations of solutions of {H}amilton--{J}acobi equations},
  author  = {Crandall, Michael G. and Lions, Pierre-Louis},
  journal = {Mathematics of Computation},
  volume  = {43},
  number  = {167},
  pages   = {1--19},
  year    = {1984}
}

@article{CrandallIshiiLions1992,
  title   = {User's guide to viscosity solutions of second order partial differential equations},
  author  = {Crandall, Michael G. and Ishii, Hitoshi and Lions, Pierre-Louis},
  journal = {Bulletin of the American Mathematical Society},
  volume  = {27},
  number  = {1},
  pages   = {1--67},
  year    = {1992},
  doi     = {10.1090/S0273-0979-1992-00266-5}
}

@article{BarlesSouganidis1991,
  title     = {Convergence of approximation schemes for fully nonlinear second order equations},
  author    = {Barles, Guy and Souganidis, Panagiotis E.},
  journal   = {Asymptotic Analysis},
  volume    = {4},
  number    = {3},
  pages     = {271--283},
  year      = {1991},
  publisher = {IOS Press}
}

@book{FlemingSoner2006,
  title     = {Controlled {M}arkov Processes and Viscosity Solutions},
  author    = {Fleming, Wendell H. and Soner, H. Mete},
  series    = {Stochastic Modelling and Applied Probability},
  volume    = {25},
  edition   = {2},
  year      = {2006},
  publisher = {Springer},
  address   = {New York}
}

@book{BardiCapuzzo1997,
  title     = {Optimal Control and Viscosity Solutions of {H}amilton--{J}acobi--{B}ellman Equations},
  author    = {Bardi, Martino and Capuzzo-Dolcetta, Italo},
  series    = {Systems \& Control: Foundations \& Applications},
  publisher = {Birkh\"auser},
  address   = {Boston},
  year      = {1997}
}

@book{tran2021hamilton,
  title     = {{H}amilton--{J}acobi Equations: Theory and Applications},
  author    = {Tran, Hung Vinh},
  series    = {Graduate Studies in Mathematics},
  volume    = {213},
  year      = {2021},
  publisher = {American Mathematical Society}
}

@article{TangTranZhang2025,
  title     = {Policy iteration for the deterministic control problems---a viscosity approach},
  author    = {Tang, Wenpin and Tran, Hung Vinh and Zhang, Yuming Paul},
  journal   = {SIAM Journal on Control and Optimization},
  volume    = {63},
  number    = {1},
  pages     = {375--401},
  year      = {2025},
  publisher = {SIAM},
  doi       = {10.1137/24M1631602}
}

@article{HuangWangZhou2025,
  title     = {Convergence of policy iteration for entropy-regularized stochastic control problems},
  author    = {Huang, Yu-Jui and Wang, Zhenhua and Zhou, Zhou},
  journal   = {SIAM Journal on Control and Optimization},
  volume    = {63},
  number    = {2},
  pages     = {752--777},
  year      = {2025},
  publisher = {SIAM},
  doi       = {10.1137/24M1638744}
}

@article{TranWangZhang2025,
  title     = {Policy iteration for exploratory {H}amilton--{J}acobi--{B}ellman equations},
  author    = {Tran, Hung Vinh and Wang, Zhenhua and Zhang, Yuming Paul},
  journal   = {Applied Mathematics and Optimization},
  volume    = {91},
  number    = {2},
  pages     = {50},
  year      = {2025},
  publisher = {Springer},
  doi       = {10.1007/s00245-025-10249-3}
}

@article{raissi2019physics,
  title     = {Physics-informed neural networks: a deep learning framework for solving
               forward and inverse problems involving nonlinear partial differential equations},
  author    = {Raissi, Maziar and Perdikaris, Paris and Karniadakis, George E.},
  journal   = {Journal of Computational Physics},
  volume    = {378},
  pages     = {686--707},
  year      = {2019},
  publisher = {Elsevier}
}

@article{han2018solving,
  title     = {Solving high-dimensional partial differential equations using deep learning},
  author    = {Han, Jiequn and Jentzen, Arnulf and E, Weinan},
  journal   = {Proceedings of the National Academy of Sciences},
  volume    = {115},
  number    = {34},
  pages     = {8505--8510},
  year      = {2018},
  publisher = {National Academy of Sciences}
}

@article{lee2025hamilton,
  title     = {{H}amilton--{J}acobi based policy-iteration via deep operator learning},
  author    = {Lee, Jae Yong and Kim, Yeoneung},
  journal   = {Neurocomputing},
  volume    = {646},
  pages     = {130515},
  year      = {2025},
  publisher = {Elsevier},
  doi       = {10.1016/j.neucom.2025.130515}
}

@article{kim2025neural,
  title   = {Neural policy iteration for stochastic optimal control: a physics-informed approach},
  author  = {Kim, Yeongjong and Kim, Yeoneung and Kim, Minseok and Cho, Namkyeong},
  journal      = {arXiv preprint arXiv:2508.01718},
  year         = {2025},
  eprint       = {2508.01718},
  archiveprefix = {arXiv},
  primaryclass = {math.OC}
}

@inproceedings{kim2026physics,
  title     = {Physics-informed approach for exploratory {H}amilton--{J}acobi--{B}ellman equations via policy iterations},
  author    = {Kim, Yeongjong and Cho, Namkyeong and Kim, Minseok and Kim, Yeoneung},
  booktitle = {Proceedings of the AAAI Conference on Artificial Intelligence},
  volume    = {40},
  pages     = {22609--22616},
  year      = {2026},
  note      = {Issue 27; arXiv:2508.01720}
}

@book{SuttonBarto2018,
  title     = {Reinforcement Learning: An Introduction},
  author    = {Sutton, Richard S. and Barto, Andrew G.},
  edition   = {2},
  year      = {2018},
  publisher = {MIT Press},
  address   = {Cambridge, MA}
}

\end{document}